\newcommand{\E}{\mathbb E}
\newcommand{\R}{\mathbb{R}}
\newcommand{\N}{\mathbb{N}}
\newcommand{\Z}{\mathbb{Z}}
\renewcommand{\P}{\mathbb{P}}
\newcommand{\Var}{\mathop{\mathrm{Var}}\nolimits}
\newcommand{\esssup}{\mathop{\mathrm{esssup}}\nolimits}
\newcommand{\NN}{\mathcal{N}}
\newcommand{\FF}{\mathcal{F}}
\newcommand{\bbb}{\mathfrak{b}}
\newcommand{\eps}{\varepsilon}
\newcommand{\todistr}{\stackrel{\mathcal{D}}{\to}}
\newcommand{\toprobab}{\stackrel{\mathcal{P}}{\to}}
\newcommand{\nottoprobab}{\stackrel{\mathcal{P}}{\nrightarrow}}
\theoremstyle{plain}
\newtheorem{theorem}{Theorem}[]%[section]
\newtheorem{lemma}{Lemma}[]%[section]
\newtheorem{corollary}{Corollary}[]%[section]
\newtheorem{proposition}{Proposition}[]%[section]
\theoremstyle{definition}
\newtheorem{remark}{Remark}[]%[section]
\newtheorem{ass}{Assumption}[]%[section]
\theoremstyle{remark}
\begin{document}

\title{Limiting Distributions for Sums of Independent Random Products}
\author{Zakhar Kabluchko}
\maketitle
\begin{abstract}
Let $\{X_{i,j}: (i,j)\in\N^2\}$ be a two-dimensional array of independent copies of a random variable $X$, and let $\{N_n\}_{n\in\N}$ be a sequence of natural numbers such that $\lim_{n\to\infty}e^{-cn}N_n=1$ for some $c>0$. Our main object of interest is the sum of independent random products
$$
Z_n=\sum_{i=1}^{N_n} \prod_{j=1}^{n}e^{X_{i,j}}.
$$
It is shown that the limiting properties of $Z_n$, as $n\to\infty$, undergo phase transitions at two critical points $c=c_1$ and $c=c_2$. Namely, if $c>c_2$, then $Z_n$ satisfies the central limit theorem with the usual normalization, whereas for $c<c_2$, a totally skewed $\alpha$-stable law appears in the limit. Further, $Z_n/\E Z_n$ converges in probability to $1$ if and only if $c> c_1$.
If the random variable $X$ is Gaussian, we recover the results of Bovier, Kurkova, and L\"owe [Fluctuations of the free energy in the REM and the $p$-spin SK models. Ann.\ Probab.\ 30(2002), 605-651].
\end{abstract}
\noindent \textit{Keywords}: random products, random exponentials, random energy model, triangular arrays, central limit theorem, stable laws, Erd\"os-R\'enyi laws\\
\textit{AMS 2000 Subject Classification}: 60G50, 60F05, 60F10

\section{Introduction and statement of results}\label{sec:intro}
\subsection{Introduction}\label{sec:intro_sub}
The aim of the present paper is to study the limiting distribution, as $n\to\infty$, of the random variable obtained by adding up a large number $N_n$ of independent summands, each summand being a product of $n$ independent positive-valued random variables.
To be more precise, let $\{X_{i,j}: (i,j)\in\N^2\}$ be a two-dimensional array of independent copies of a random variable $X$, and let $\{N_n\}_{n\in\N}$ be a sequence of natural numbers such that for some $c>0$,
\begin{equation}\label{eq:asympt_N}
\lim_{n\to\infty} e^{-cn} N_n=1.
\end{equation}
Then we are interested in the limiting properties, as $n\to\infty$, of the random variable %the sum of independent random products
\begin{equation}\label{eq:def_w}
Z_n=\sum_{i=1}^{N_n} \prod_{j=1}^{n} e^{X_{i,j}}. %e^{X_{1,j}+\ldots+X_{n,j}}.
\end{equation}
Setting $Y_{i,j}:=e^{X_{i,j}}$, we may rewrite~\eqref{eq:def_w} as
$Z_n=\sum_{i=1}^{N_n} \prod_{j=1}^{n} Y_{i,j}$,
which leads to the ``sum of independent random products'' interpretation mentioned in the title of the paper.

To give a motivation for studying $Z_n$, let us consider the following model. Suppose we observe a large number of independent objects whose sizes evolve in time. Each object has size $1$ at time $0$ and  grows (or decays) in a random multiplicative way. This means that the size of the $i$-th object at time $j$ is obtained by multiplying the size of the same object at time $j-1$ by some positive random variable $Y_{i,j}$, where the variables $\{Y_{i,j}: (i,j)\in\N^2\}$ are supposed to be i.i.d. Then the size of the $i$-th object at time $n$ is given by $\prod_{j=1}^{n} Y_{i,j}$. Therefore, $Z_n$ may be interpreted as the total size of $N_n$ independently evolving objects at time $n$. Of course, the words ``object'' and ``size'' have to be understood in a very general sense. For example, one may consider independent particles whose mass, charge, or energy changes randomly and multiplicatively due to the motion in a random environment. Our limit theorems for $Z_n$ will be applicable if the number $N_n$ of objects is much larger  than the time $n$, cf.~\eqref{eq:asympt_N}.

A closely related model is the sum of independent random exponentials
\begin{equation}\label{eq:def_S_N}
S_N(t)=\sum_{i=1}^{N} e^{\sqrt {t} X_i},
\end{equation}
where  $\{X_i, i\in \N\}$ is  a sequence of independent copies of a random variable $X$. The limiting properties of $S_N(t)$ were studied by a number of authors, mostly in connection with Derrida's Random Energy Model, a simple model of disordered systems~\cite{bovier_book06}.
Weak and strong laws of large numbers for $S_N(t)$ as $N,t\to\infty$ were proved in~\cite{eisele83}, \cite{olivieri_picco84}, \cite{galves_etal89}.
The study of limiting distributions of $S_N(t)$ began with \citet{schlather01}, who considered quantities essentially equivalent to $S_N(t)$ to obtain a continuous interpolation between the central limit theorem and limit theorems for extreme values. To describe his idea, let us consider two ``boundary'' cases. In the first case, assume that $N\to\infty$, whereas $t$ remains constant. Then the limiting distribution of $S_N(t)$ is Gaussian by the central limit theorem. In the second case, assume that $t\to\infty$, whereas  $N$ is some fixed, but very large constant. Then the sum $S_N(t)$ reduces essentially to the largest summand and hence, one may expect that in the second case, the distribution of $S_N(t)$ will be governed by the extreme value theory. Now, \citet{schlather01} considered the \textit{intermediate} case in which both $t$ and $N$ tend to $\infty$ in some \textit{synchronized} way, and obtained (depending on the law of $X$) several non-trivial families of limiting distributions for $S_N(t)$ interpolating between the boundary cases.
%the Gaussian distribution and the extreme-value distributions arising in
%This family includes the Gaussian distribution as well as totally skewed $\alpha$-stable laws for all $\alpha\in(0,2)$. The transition from Gaussian to $\alpha$-stable distribution  corresponds to $\alpha=2$, whereas the limit $\alpha\to 0$ can be understood in some appropriate sense as a transition to the extreme-value Gumbel (double-exponential) distribution.
Independently of~\cite{schlather01}, the limiting distributions of  $S_N(t)$ were studied by \citet{bovier_etal02} in the case of standard Gaussian $X$.
The authors of~\cite{bovier_etal02} were motivated by the Random Energy Model and obtained a family of limiting laws similar to the families found in~\cite{schlather01}.
The results of~\cite{schlather01} and~\cite{bovier_etal02} were largely extended in~\cite{ben_arous_etal05} (where, additionally, a new method was introduced and the structure of the family of limiting laws was clarified), \cite{bogachev06}, \cite{bogachev07}, \cite{janssen09}.  Let us also mention that a picture of limiting laws similar to that found in the papers cited above was obtained in a different context in~\cite{csorgo_etal86}, \cite{csorgo_mason86}.

Our aim is to determine the structure of the family of limiting laws for the sum of independent random products $Z_n$.
If the distribution of $X$ is Gaussian, then $Z_n$ reduces essentially to the sum of random exponentials $S_N(t)$. In this case, we will recover the results of~\cite{bovier_etal02}.

%Our results on the sum of random products $Z_n$ reduce to known results on the sum of random exponentials $S_N(t)$ if $X$ is Gaussian or $\alpha$-stable with skewness parameter $-1$ (note that the only value of the  skewness parameter for which~\eqref{eq:def_varphi} is satisfied, is $-1$). In the Gaussian case we recover the results of~\cite{bovier_etal02}, \cite{eisele83}, whereas the totally skewed $\alpha$-stable case is covered by~\cite{ben_arous_etal05}, \cite{eisele83}.

%Our results on the sum of random products $Z_n$ reduce to known results on the sum of random exponentials $S_N(t)$ if $X$ is Gaussian or $\alpha$-stable with skewness parameter $-1$ (note that the only value of the  skewness parameter for which~\eqref{eq:def_varphi} is satisfied, is $-1$). In the Gaussian case we recover the results of~\cite{bovier_etal02}, \cite{eisele83}, whereas the totally skewed $\alpha$-stable case is covered by~\cite{ben_arous_etal05}, \cite{eisele83}.

\begin{remark}
After the first version of this paper was submitted to the arXiv, it has been pointed out to the author by Leonid Bogachev that the results of the paper (except for Theorem~\ref{theo:stable_lattice}) have been obtained by M.\ Cranston and S.\ Molchanov, Limit laws for sums of products of exponentials of iid random variables, Isr.\ J.\ Math., 148 (2005), 115--136 and O.\ Khorunzhiy, Limit theorem for sums of products of random variables, Markov Process.\ Related Fields 9 (2003), 675--686.

\end{remark}

\subsection{Notation}\label{sec:not_0}
To state our results, we need to recall some facts connected with Cram\'er's large deviations theorem, see e.g.~\cite[\S~2.2]{dembo_book93}. Let $X$ be a random variable (always  assumed to be \textit{non-degenerate}) such that
\begin{equation}\label{eq:def_varphi}
\varphi(t):=\log \E[e^{tX}] \text{ is finite for all } t\geq 0.
\end{equation}
The function $\varphi$ vanishes at $0$, it is continuous on $[0,\infty)$, infinitely differentiable on $(0,\infty)$, and strictly convex.  Define
\begin{equation}\label{eq:def_beta_0}
\beta_0:=\lim_{t\to 0+}\varphi'(t)=\E X,\;\;\; \beta_{\infty}:=\lim_{t\to +\infty} \varphi'(t)=\esssup X.
\end{equation}
It may happen that $\beta_0=-\infty$ or $\beta_{\infty}=+\infty$.

Let  $I:(\beta_0,+\infty)\to (0,+\infty]$  be the Legendre-Fenchel transform of $\varphi$ defined  by
\begin{equation}\label{eq:def_I}
I(\beta)=\sup_{t\geq 0} (\beta t-\varphi(t)).
\end{equation}
The function $I$ has following properties, see~\cite[\S~2.2]{dembo_book93}. It is finite on $(\beta_0,\beta_{\infty})$, and we have $I(\beta)=+\infty$ for $\beta>\beta_{\infty}$. On the interval $(\beta_0,\beta_{\infty})$, the function $I$ is strictly increasing, strictly convex, and infinitely differentiable. If $\beta_{\infty}\neq +\infty$, then  $I(\beta_{\infty})=\lim_{\beta\uparrow \beta_{\infty}}I(\beta)$, and the value $I(\beta_{\infty})$ may be both finite and infinite.   Let $\beta=\varphi'(\alpha)$ for some $\alpha>0$ (note that this implies that $\beta\in (\beta_0,\beta_{\infty})$). Then the supremum in~\eqref{eq:def_I} is attained at $t=\alpha$ and hence,
\begin{equation}\label{eq:I_varphi_prime}
I(\varphi'(\alpha))=\alpha\varphi'(\alpha)-\varphi(\alpha),\;\;\; \alpha>0.
\end{equation}

We will see that the limiting properties of the sum of independent random products $Z_n$ undergo  phase transitions at two ``critical points'' $c=c_1$ and $c=c_2$ given by
\begin{equation}\label{eq:def_c1_c2}
c_1:=I(\varphi'(1))=\varphi'(1)-\varphi(1),\;\;\; c_2:=I(\varphi'(2))=2\varphi'(2)-\varphi(2).
\end{equation}
The function $\alpha\mapsto I(\varphi'(\alpha))$, $\alpha>0$, is strictly increasing. Therefore, $0<c_1<c_2<\infty$.
%Note that $0<c_1<c_2$ since the function $\alpha\mapsto I(\varphi'(\alpha))$, $\alpha>0$, is strictly increasing.

\subsection{Results on limiting distributions}\label{sec:res_lim_distr}
Our first theorem shows that for $c>c_2$, the sum $Z_n$ satisfies the central limit theorem.
%We denote by $\NN(0,\sigma^2)$ the normal distribution with mean zero and variance $\sigma^2$.
%Weak convergence of probability distributions is denoted by $\todistr$.
\begin{theorem}\label{theo:normal}
Suppose that~\eqref{eq:asympt_N} and~\eqref{eq:def_varphi} hold, and assume that $c>c_2$. Then
$$
\frac{Z_n-\E Z_n}{\sqrt{\Var Z_n}}\todistr \NN(0,1),\;\;\; n\to\infty.
$$
\end{theorem}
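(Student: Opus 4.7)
The plan is to apply Lyapunov's central limit theorem to the triangular array of i.i.d.\ random variables $W_i^{(n)} := \prod_{j=1}^{n} e^{X_{i,j}}$, $i = 1, \ldots, N_n$, whose centred partial sums make up $Z_n - \E Z_n$. The moments of $W_i^{(n)}$ follow immediately from independence: $\E (W_i^{(n)})^k = e^{n\varphi(k)}$ for every $k \geq 0$, and all these moments are finite by~\eqref{eq:def_varphi}. Strict convexity of $\varphi$ together with $\varphi(0)=0$ yields $\varphi(2) > 2\varphi(1)$, hence
$$
\Var Z_n = N_n\bigl(e^{n\varphi(2)} - e^{2n\varphi(1)}\bigr) \sim e^{n(c+\varphi(2))}, \qquad n\to\infty.
$$

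To verify Lyapunov's condition I would fix a small $\delta>0$ (to be chosen later) and estimate
$$
L_n(\delta) := \frac{N_n\,\E\bigl|W_i^{(n)} - \E W_i^{(n)}\bigr|^{2+\delta}}{(\Var Z_n)^{1+\delta/2}}.
$$
The triangle inequality combined with Jensen's inequality gives $\E|W_i^{(n)} - \E W_i^{(n)}|^{2+\delta} \leq 2^{1+\delta}\,\E (W_i^{(n)})^{2+\delta} = 2^{1+\delta} e^{n\varphi(2+\delta)}$, which is finite by~\eqref{eq:def_varphi}. Plugging in the variance asymptotics and~\eqref{eq:asympt_N},
$$
L_n(\delta) \leq C\,\exp\Bigl(n\bigl[\varphi(2+\delta) - (1+\tfrac{\delta}{2})\varphi(2) - \tfrac{\delta}{2}\,c\bigr]\Bigr).
$$

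The decisive observation is that the bracketed expression, divided by $\delta/2$, equals $\frac{\varphi(2+\delta)-\varphi(2)}{\delta/2} - \varphi(2) - c$, which converges as $\delta\to 0^+$ to $2\varphi'(2) - \varphi(2) - c = c_2 - c$ by the definition~\eqref{eq:def_c1_c2} of $c_2$ and the differentiability of $\varphi$ at $2$. Since the hypothesis is $c > c_2$, the limit is strictly negative, so I may pick $\delta > 0$ small enough to make the bracketed quantity negative; then $L_n(\delta)$ decays exponentially to $0$ and Lyapunov's CLT delivers the claim. There is no serious obstacle here, but the step worth flagging is exactly this one: it is the identity $c_2 = 2\varphi'(2) - \varphi(2)$ that pins down the precise threshold at which the Lyapunov ratio first stops converging, so the argument cannot be pushed below $c = c_2$ and genuinely new limit laws must appear there, in line with the results of the following sections.
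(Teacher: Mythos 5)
Your argument is correct, but it takes a genuinely different (and more elementary) route than the paper. The paper also works with the triangular array $W_{n,i}=e^{S_{i,n}-b_n}$, but it invokes the general theory of convergence for such arrays (Theorem~4.3 on p.~119 of~\cite{petrov_book95}) and verifies the three truncation conditions~\eqref{eq:clt_cond1}--\eqref{eq:clt_cond3}; this requires the Chernoff bound of Lemma~\ref{lem:ld_est0}, convexity properties of the rate function $I$, and a truncation inequality from~\cite{ben_arous_etal05}. You bypass all of that by applying Lyapunov's CLT directly, which is legitimate here because each row consists of i.i.d.\ positive variables with \emph{all} exponential moments finite by~\eqref{eq:def_varphi}, so the $(2+\delta)$-moment $\E[(W^{(n)})^{2+\delta}]=e^{n\varphi(2+\delta)}$ is available without any truncation; your variance asymptotics $\Var Z_n\sim N_ne^{n\varphi(2)}$ (from $\varphi(2)>2\varphi(1)$) coincides with the paper's, and your key exponent computation --- that $\bigl[\varphi(2+\delta)-(1+\tfrac{\delta}{2})\varphi(2)-\tfrac{\delta}{2}c\bigr]/(\delta/2)\to 2\varphi'(2)-\varphi(2)-c=c_2-c<0$ --- is essentially the same calculation the paper performs for $c(\delta)$ in the proof of~\eqref{eq:clt_cond2a}/\eqref{eq:lim_n_E_Wn_delta}, so the threshold $c_2$ from~\eqref{eq:def_c1_c2} enters in the same way. (Minor quibble: the constant in $\E|W-\E W|^{2+\delta}\le 2^{1+\delta}\E W^{2+\delta}$ should be $2^{2+\delta}$ via the $c_r$-inequality plus Jensen, but any fixed constant suffices.) What your shortcut buys is brevity and transparency for $c>c_2$; what the paper's heavier truncation machinery buys is reusability, since the same three-condition framework, suitably modified, carries over to the critical case $c=c_2$ (Theorem~\ref{theo:crit}) and to the stable regime $c<c_2$ (Theorems~\ref{theo:stable} and~\ref{theo:stable_lattice}), where a Lyapunov-type moment condition necessarily fails.
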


At the critical point $c=c_2$ the limiting distribution is still Gaussian, but with variance $1/2$.
%We need a condition on $N_n$ which is somewhat stronger than~\eqref{eq:asympt_N}: we assume that
%\begin{equation}\label{eq:asympt_N_1}
%\log N_n=c_2 n+o(\sqrt n) \text{ as } n\to\infty.
%\end{equation}
\begin{theorem}\label{theo:crit}
Suppose that~\eqref{eq:asympt_N} and~\eqref{eq:def_varphi} hold, and assume that $c=c_2$. Then
$$
\frac{Z_n-\E Z_n}{\sqrt{\Var Z_n}}\todistr \NN(0,1/2),\;\;\; n\to\infty.
$$
\end{theorem}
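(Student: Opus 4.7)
The plan is to follow the strategy of Theorem \ref{theo:normal} but with a truncation placed exactly at the critical level $a_n := n\varphi'(2)$, where a boundary-Laplace computation (cleanly executed via exponential tilting) will produce the factor $1/2$. Writing $W_i := e^{S_{i,n}}$ with $S_{i,n} := \sum_{j=1}^n X_{i,j}$, the random variable $Z_n$ is a sum of $N_n$ i.i.d.\ positive random variables with $\E W_1 = e^{n\varphi(1)}$ and $\E W_1^2 = e^{n\varphi(2)}$. Strict convexity of $\varphi$ gives $\varphi(2)>2\varphi(1)$, so $\Var W_1\sim e^{n\varphi(2)}$; combined with $N_n\sim e^{c_2 n}$ and the identity $c_2+\varphi(2)=2\varphi'(2)$, this yields
\[
\sqrt{\Var Z_n} \sim e^{n\varphi'(2)} = e^{a_n}.
\]
I then decompose $Z_n = U_n + V_n$ with $U_n := \sum_{i=1}^{N_n} W_i \mathbf{1}_{\{S_{i,n}\leq a_n\}}$ and $V_n := Z_n-U_n$, and treat the two pieces separately.

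The engine of the argument is exponential tilting: for $\alpha>0$ let $\tilde\P_\alpha$ denote the law under which the $X_{i,j}$ are i.i.d.\ with density $e^{\alpha x-\varphi(\alpha)}$ with respect to the law of $X$. Then for any measurable $f$,
\[
\E[f(S_{1,n})\,e^{\alpha S_{1,n}}] = e^{n\varphi(\alpha)}\,\tilde\E_\alpha f(S_{1,n}),
\]
and under $\tilde\P_\alpha$ the random walk $S_{1,n}$ has mean $n\varphi'(\alpha)$ and variance $n\varphi''(\alpha)$. Taking $\alpha=2$ and $f=\mathbf{1}_{(-\infty,a_n]}$, the identity $a_n=n\varphi'(2)=\tilde\E_2 S_{1,n}$ together with the classical CLT for $S_{1,n}$ under $\tilde\P_2$ gives
\[
\E[W_1^2\mathbf{1}_{\{S_{1,n}\leq a_n\}}] = e^{n\varphi(2)}\,\tilde\P_2(S_{1,n}\leq \tilde\E_2 S_{1,n}) \sim \tfrac12\, e^{n\varphi(2)}.
\]
Since $(\E W_1)^2=e^{2n\varphi(1)}$ is exponentially smaller than $e^{n\varphi(2)}$, I conclude $\Var U_n \sim \tfrac12\Var Z_n$, which is the source of the factor $1/2$ in the conclusion.

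For the remainder $V_n$ I need a sharpening of Cram\'er's theorem, since the crude bound $\P(S_{1,n}>a_n)\leq e^{-nI(\varphi'(2))}=e^{-c_2 n}$ only yields $N_n\P(S_{1,n}>a_n)=O(1)$ at criticality. The Bahadur--Rao refinement (under mild regularity on $X$) supplies the missing polynomial factor $\P(S_{1,n}>a_n)\sim C n^{-1/2} e^{-c_2 n}$, so $\P(V_n\neq 0)\leq N_n\P(S_{1,n}>a_n)=O(n^{-1/2})\to 0$, and hence $V_n/\sqrt{\Var Z_n}\toprobab 0$. The mean is handled analogously by tilting at $\alpha=1$ and applying Bahadur--Rao under $\tilde\P_1$ (here $\varphi'(2)>\varphi'(1)=\tilde\E_1 X$, so this is a proper right-tail large deviation): $\E V_n = N_n e^{n\varphi(1)}\tilde\P_1(S_{1,n}>a_n) = O(n^{-1/2} e^{n\varphi'(2)}) = o(\sqrt{\Var Z_n})$, so $(V_n-\E V_n)/\sqrt{\Var Z_n}\toprobab 0$.

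It remains to establish the CLT for $U_n$. Its summands are bounded by $e^{a_n}$, which is comparable to $\sqrt{\Var U_n}$, so Lindeberg's condition must be checked rather than being automatic. For fixed $\eps>0$, the tilting identity reduces the Lindeberg quantity to a bounded multiple of $\tilde\P_2(S_{1,n}-a_n\in(-K_\eps,0])$ with some $K_\eps>0$; since $(S_{1,n}-a_n)/\sqrt n$ converges under $\tilde\P_2$ to $\NN(0,\varphi''(2))$, the local CLT bounds this probability by $O(n^{-1/2})$, verifying Lindeberg. Hence $(U_n-\E U_n)/\sqrt{\Var U_n}\todistr\NN(0,1)$, and the ratio $\Var U_n/\Var Z_n\to 1/2$ converts this to $(U_n-\E U_n)/\sqrt{\Var Z_n}\todistr\NN(0,1/2)$. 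Combined with the negligibility of $V_n-\E V_n$, this yields the theorem. The main technical obstacle is the need for Bahadur--Rao / local CLT tail estimates rather than crude Cram\'er bounds: at criticality the polynomial $n^{-1/2}$ correction is precisely what both makes $V_n$ negligible and, through the tilted CLT, produces the characteristic $1/2$ factor in the limiting variance.
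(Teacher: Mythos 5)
Your argument is correct, and it runs on the same computational engine as the paper's proof---exponential tilting at $\alpha=2$ combined with the CLT at the tilted mean to produce the factor $1/2$, plus refined tail estimates carrying the extra $n^{-1/2}$ to make everything else negligible---but it is packaged genuinely differently. The paper treats $Z_n$ as a triangular array and invokes the general convergence criterion (Theorem~4.3 of Petrov) for $W_n=e^{S_n-b_n}$ with $b_n=(\log N_n+\varphi(2)n)/2$, checking three conditions at a fixed truncation level $\tau$: the $1/2$ comes from Part~\ref{p:1_lem3} of Lemma~\ref{lem:trunc_clt} (exactly your tilt-at-$2$ computation), and the truncated-mean tail condition is handled by integration by parts over the two ranges $[t,\eps n]$ and $[\eps n,\infty)$ using Lemma~\ref{cor:ld} and Lemma~\ref{lem:ld_est0}. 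You instead truncate at $a_n=n\varphi'(2)$ (the same threshold up to $O(1)$), prove a Lindeberg CLT for the truncated sum $U_n$ directly, and dispose of the remainder $V_n$ via $\P[V_n\neq 0]\leq N_n\P[S_n>a_n]=O(n^{-1/2})$ together with the clean identity $\E V_n=N_n e^{\varphi(1)n}\,\tilde\P_1[S_n>a_n]$, which with a sharp right-tail bound for the tilted walk gives $\E V_n=O(n^{-1/2}e^{\varphi'(2)n})=o(\sqrt{\Var Z_n})$. This buys a shorter treatment of the tail-mean term (one tilt instead of the paper's two-region partial integration), at the price of verifying Lindeberg by hand, which you do correctly via the $O(n^{-1/2})$ bound on the probability that the tilted walk lands in an interval of fixed length around its mean; your identification that the polynomial correction is exactly what is needed at criticality matches the paper's use of Lemma~\ref{cor:ld}.

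One point to tighten: Theorem~\ref{theo:crit} carries no non-lattice hypothesis, whereas the exact Bahadur--Rao asymptotics you invoke (Part~\ref{p:1_th_petrov} of Theorem~\ref{theo:ld}) require it. You only ever need upper bounds of the form $O(n^{-1/2}e^{-nI(\beta)})$, and these hold for lattice $X$ as well---this is precisely Lemma~\ref{cor:ld}, applied once to $X$ itself at $\beta=\varphi'(2)$ and once to the tilted-at-$1$ variables (whose cumulant generating function $\varphi(\cdot+1)-\varphi(1)$ is again finite on $[0,\infty)$, with $\varphi'(2)$ interior to the admissible range $(\varphi'(1),\beta_\infty)$). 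Likewise, the interval probability in your Lindeberg step should be justified by a concentration-function bound (or the lattice/non-lattice local limit theorems), either of which gives $O(n^{-1/2})$ without extra assumptions. With these substitutions your proof covers the statement in its full generality.
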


The next theorem shows that for $c<c_2$, the central limit theorem breaks down and instead, a totally skewed $\alpha$-stable distribution appears in the limit. To state it, we need to assume that the distribution of the random variable $X$ is \textit{non-lattice}.  Recall that a random variable $X$ is called \textit{lattice} if there exist $h,a\in\R$ such that the values of $X$ are a.s.\ of the form $hn+a$, $n\in\Z$.
\begin{theorem}\label{theo:stable}
Suppose that~\eqref{eq:asympt_N} and~\eqref{eq:def_varphi} hold, $c\in(0,c_2)$, and assume that the distribution of $X$ is non-lattice.  Define $\alpha\in (0,2)$ as the unique solution of the equation $I(\varphi'(\alpha))=c$, and let $\beta=\varphi'(\alpha)$. Set
\begin{align}
A_n
&=
\begin{cases}
\E Z_n=N_n e^{\varphi(1)n}, & \text{ if }c\in (c_1,c_2),\\
N_n \E[e^{S_{n}}1_{S_{n}\leq b_n}], &\text{ if }    c=c_1, \text{ where } S_{n}=\sum_{j=1}^n X_{1,j},\\
0,     &\text{ if }c\in (0,c_1),
\end{cases}\label{eq:def_An}\\
B_n
&=e^{b_n}, \text{ where } b_n=\beta n -\alpha^{-1}\log \left( \alpha \sqrt{2\pi\varphi''(\alpha) n} \right).
\label{eq:def_Bn_large}
\end{align}
Then
$$
\frac{Z_n-A_n}{B_n}\todistr \FF_{\alpha}, \;\;\; n\to\infty,
$$
where $\FF_{\alpha}$ is an $\alpha$-stable distribution with skewness parameter $+1$. The characteristic function $\phi_{\alpha}$ of $\FF_{\alpha}$ is given by %(c.f.\ Eq.~(2.16) in~\cite{ben_arous_etal05}):
$$
\log \phi_{\alpha}(u)=
\begin{cases}
-\Gamma(1-\alpha)|u|^{\alpha}\exp\left( -i\frac{\pi}2 \alpha \operatorname{sgn} (u) \right), &\text{ if }\alpha\neq 1,\\
iu(1-\gamma)-|u|\left(\frac {\pi}{2}+ i\operatorname{sgn} (u)\log|u|\right), &\text{ if }\alpha=1,
\end{cases}
$$
where $\gamma$ is the Euler constant.
\end{theorem}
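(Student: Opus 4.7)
The plan is to view $Z_n$ as a sum of $N_n$ i.i.d.\ copies of the single random product $W_n := e^{S_n}$, where $S_n = X_{1,1}+\cdots+X_{1,n}$, and to apply a classical triangular-array convergence criterion whose limit is an $\alpha$-stable law with L\'evy measure $\nu(dx) = \alpha x^{-\alpha-1}dx$ on $(0,\infty)$ and no Gaussian component. The analytic heart of the argument is the Pareto-type tail
$$
\lim_{n\to\infty} N_n\,\P(W_n > B_n x) \;=\; x^{-\alpha}, \qquad x>0,
$$
which I would derive from the Bahadur--Rao sharp large deviation theorem (this is exactly where the non-lattice hypothesis enters): for $\beta=\varphi'(\alpha)$,
$$
\P(S_n > \beta n + u) \;\sim\; \frac{e^{-n I(\beta)-\alpha u}}{\alpha\sqrt{2\pi n\varphi''(\alpha)}},\qquad n\to\infty,
$$
uniformly for $u$ in compact sets. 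Substituting $u = \log x - \alpha^{-1}\log(\alpha\sqrt{2\pi n\varphi''(\alpha)})$ cancels the polynomial prefactor exactly, and since $I(\beta) = c$ by~\eqref{eq:I_varphi_prime} and $N_n\sim e^{cn}$ by hypothesis, the exponential prefactor yields $x^{-\alpha}$.

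With the tail in hand I would invoke a standard criterion for the convergence of row sums of i.i.d.\ triangular arrays to infinitely divisible laws (e.g.\ Gnedenko--Kolmogorov, or Kallenberg, Theorem~15.28). Beyond the tail condition, which immediately gives the vague convergence $N_n\P(W_n/B_n\in\cdot)\to\nu$ on $(0,\infty)$, two further items need to be verified. The first is the ``small-jump'' variance bound
$$
\lim_{\eps\downarrow 0}\limsup_{n\to\infty}\frac{N_n}{B_n^2}\,\E\bigl[W_n^2\,1_{W_n\leq \eps B_n}\bigr]=0,
$$
which rules out a Gaussian component; integrating by parts against $\P(W_n>t)$ and substituting $t=B_n y$ shows that the left-hand side is asymptotically $\int_0^\eps 2y\cdot y^{-\alpha}dy = O(\eps^{2-\alpha})$, which vanishes since $\alpha<2$. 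The second item is to match the canonical truncated centering $a_n := N_n\E[(W_n/B_n)\wedge 1]$ with $A_n/B_n$.

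The three cases in~\eqref{eq:def_An} correspond to the three regimes of this matching. If $\alpha\in(1,2)$ (i.e.\ $c\in(c_1,c_2)$), the Pareto tail $y^{-\alpha}$ is integrable at $+\infty$, so $\E Z_n/B_n - a_n$ converges to a finite constant and one may replace $a_n B_n$ by $\E Z_n$; the resulting shift is absorbed into the canonical form of the $\alpha$-stable characteristic function. If $\alpha\in(0,1)$ (i.e.\ $c\in(0,c_1)$), the tail $y^{-\alpha}$ is integrable at $0$, $a_n$ itself converges to a finite constant, and one may take $A_n=0$. The delicate boundary case is $\alpha=1$ (i.e.\ $c=c_1$), where $a_n$ diverges logarithmically: here the stated $A_n = N_n\E[e^{S_n}1_{S_n\leq b_n}]$ is precisely $a_n B_n$, and the Euler-constant shift $1-\gamma$ emerges from the asymptotic evaluation of the resulting divergent integral via a Frullani-type identity. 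The main technical obstacle is to upgrade Bahadur--Rao from uniformity on compact $u$ to a form uniform over a range of $u$ of order $n$, so as to integrate the tail asymptotic across the full truncation window; this can be handled by combining Bahadur--Rao on bounded windows with the weaker but sufficient global Cram\'er upper bound $\P(S_n>\beta n+u)\leq e^{-nI(\beta)-\alpha u}$ in the complementary range.
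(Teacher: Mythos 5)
Your overall route is the same as the paper's: regard $Z_n/B_n$ as a row sum of $N_n$ i.i.d.\ copies of $e^{S_n-b_n}$, derive the Pareto tail $N_n\P(W_n>B_nx)\to x^{-\alpha}$ from the Bahadur--Rao/Petrov sharp large deviation theorem (this is where non-latticeness enters, exactly as in the paper's Lemma~\ref{lem:ld}), apply a classical criterion for convergence of triangular arrays to infinitely divisible laws, and split the centering into the three regimes $\alpha<1$, $\alpha>1$, $\alpha=1$ exactly as in \eqref{eq:def_An}; the paper does all of this via Lemma~\ref{lem:ld} and Lemma~\ref{lem:aux_int}.

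The one place where your plan, as written, would fail is the step you yourself flag. The global Chernoff/convexity bound $\P(S_n>\beta n+u)\le e^{-nI(\beta)-\alpha u}$ is indeed valid for all $u$, but it is too weak in the intermediate range. Since $b_n$ in \eqref{eq:def_Bn_large} contains the correction $-\alpha^{-1}\log\bigl(\alpha\sqrt{2\pi\varphi''(\alpha)n}\bigr)$, this bound translates only into $N_n\P(W_n>B_nx)\le C\sqrt{n}\,x^{-\alpha}$: the factor $\sqrt{n}$ is precisely what the sharp large-deviation prefactor is meant to cancel, and Chernoff cannot. Hence on the window $e^{-\eps n}\le x\le e^{-T}$ (i.e.\ $y=\log x\in[-\eps n,-T]$) your estimate of, say, the truncated second moment is of order $\sqrt{n}\,e^{-(2-\alpha)T}$, which does not vanish as $n\to\infty$ for fixed $T$, so the iterated limit $\lim_{T\to\infty}\limsup_{n\to\infty}$ cannot be closed; the same defect hits the truncated means needed for the centering, including the $\alpha=1$ case. (In the far range $y\le-\eps n$ your bound is harmless, because the $\sqrt{n}$ is absorbed by $e^{-(\kappa-\alpha)\eps n}$.) What is needed on the intermediate range is the upper bound with the $n^{-1/2}$ prefactor, $\P[S_n\ge n\beta']\le Cn^{-1/2}e^{-nI(\beta')}$ uniformly for $\beta'$ in compact subsets of $(\beta_0,\beta_\infty)$ --- i.e.\ uniformity for $|u|$ up to $\eps n$, which is already contained in the Bahadur--Rao/Petrov statement and is isolated in the paper as Lemma~\ref{cor:ld} and Lemma~\ref{lem:ld_est1}; combined with convexity of $I$ it yields $N_n\P(W_n>B_nx)\le Cx^{-\alpha}$ on the whole window and makes the limit-integral interchange in Lemma~\ref{lem:aux_int} legitimate. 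With that substitution (the region where $\beta'$ leaves $(\beta_0,\beta_\infty)$ being disposed of crudely, as in the paper, or by your global bound), your argument becomes the paper's proof.
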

\begin{remark}
To see that the equation $I(\varphi'(\alpha))=c$ has a unique solution $\alpha$ for every $c\in(0,c_2)$, note that the function $\alpha\mapsto I(\varphi'(\alpha))$, $\alpha>0$, is continuous and strictly increasing, and that $\lim_{\alpha\to 0+}I(\varphi'(\alpha))=0$. For the latter fact see e.g.\ Eq.~2.2.8 in~\cite{dembo_book93}.
\end{remark}
\begin{remark}\label{rem:trunc}
It will be shown in Section~\ref{sec:trunc_clt} that in the case $c=c_1$, the sequence $A_n$ satisfies, as $n\to\infty$,
\begin{equation}\label{eq:asympt_An}
A_n=\frac 12 N_ne^{\varphi(1) n}
\left\{
1 -\frac{1}{\sqrt {2\pi \varphi''(1) n}}
\left( \log (2\pi\varphi''(1) n)-\frac {\varphi'''(1)}{3\varphi''(1)}\right)+o\left( \frac{1}{\sqrt n}\right)
\right\}.
\end{equation}
%In the case $c=c_1$, instead of defining $A_n$ by~\eqref{eq:def_An}, one may also take $A_n$ to be any sequence satisfying~\eqref{eq:asympt_An}.
The right-hand side of~\eqref{eq:asympt_An} provides an alternative way to choose $A_n$ in the case $c=c_1$.
\end{remark}

In the next theorem we describe what happens if the distribution of $X$ is lattice. As we will see, in this case there is no convergence to an $\alpha$-stable distribution. Instead, a family of infinitely divisible distributions with discrete L\'evy measures appears as the set of accumulation points of the appropriately normalized sequence $Z_n$. Since $Z_n$ changes only by a constant factor $e^{-na}$ if we replace $X$ by $X-a$, there is no restriction of generality in making the following assumption.
\begin{ass}\label{ass:non_lattice}
There is $h>0$ such that the values of $X$ belong with probability $1$ to the lattice $h\Z=\{hn: n\in\Z\}$, and, moreover, $h$ is the largest number with this property.
\end{ass}
It will be convenient to use the following notation: for $b\in \R$ we set
$$
[b]_h:=\max\{a\in h\Z: a\leq b\},\;\;\; \{b\}_h:=b-[b]_h\in [0,h).
$$
\begin{theorem}\label{theo:stable_lattice}
Suppose that~\eqref{eq:asympt_N} and~\eqref{eq:def_varphi} hold, $c\in(0,c_2)$, and let  Assumption~\ref{ass:non_lattice} be satisfied.
Define $\alpha\in (0,2)$ as the unique solution of the equation $I(\varphi'(\alpha))=c$, and let  $\beta=\varphi'(\alpha)$. Define $A_n$ as in~\eqref{eq:def_An} and set
\begin{equation}\label{eq:def_bn_lattice}
B_n=e^{b_n}, \text{ where } b_n=\beta n - \alpha^{-1}\log\left(h^{-1}\sqrt{2\pi \varphi''(\alpha)n}\right).
\end{equation}
Define $\Delta_n=\{b_n\}_h$ and let $\{n_k\}_{k\in\N}$ be an increasing integer sequence such that $\Delta:=\lim_{k\to\infty}\Delta_{n_k}$ exists.
Then
$$
\frac{Z_{n_k}-A_{n_k}}{B_ {n_k}}\todistr \FF_{\alpha, \Delta}, \;\;\; k\to\infty.
$$
Here, $\FF_{\alpha, \Delta}$ is an infinitely divisible distribution whose characteristic function $\phi_{\alpha,\Delta}$ has a L\'evy-Khintchine representation
$$
\log \phi_{\alpha,\Delta}(u)
=
%-icu + e^{-\alpha \Delta} \sum_{ x\in  \exp(h\Z) }\left(e^{iux}-1-iux1_{x\leq 1} \right) x^{-\alpha}.
iC_{\alpha, \Delta}u + \sum_{ x\in  \exp(h\Z-\Delta) }\left(e^{iux}-1- iux 1_{x<1}\right) x^{-\alpha},
$$ %\frac{iux}{1+x^2}
where $\exp(h\Z-\Delta)$ denotes the set $\{e^{hn-\Delta}: n\in\Z\}$, and $C_{\alpha,\Delta}$ is some constant.
\end{theorem}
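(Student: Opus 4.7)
The plan is to verify the classical convergence criterion for row sums of a triangular array of i.i.d.\ nonnegative random variables to an infinitely divisible law (see e.g.\ Kallenberg's \emph{Foundations of Modern Probability}, Theorem~15.28, or Petrov's \emph{Sums of Independent Random Variables}, Chapter~3). Writing $Y_{i,n}=e^{S_{i,n}}$ with $S_{i,n}=\sum_{j=1}^n X_{i,j}$, one must check: (i)~$N_n \P(Y_{1,n}/B_n \in \cdot)$ converges vaguely on $(0,\infty)$ to the claimed Lévy measure $\nu_{\alpha,\Delta}:=\sum_{x \in \exp(h\Z-\Delta)} x^{-\alpha}\delta_x$; (ii)~the small-jump truncated second moment $N_n \E[(Y_{1,n}/B_n)^2 1_{Y_{1,n}/B_n \le \eps}]$ tends, as $n=n_k\to\infty$ then $\eps\to 0$, to $\int_{0}^{\eps} x^2\,\nu_{\alpha,\Delta}(dx)$ then to $0$; and (iii)~the centering $A_{n_k}/B_{n_k}$ has a limit that combines with (i)-(ii) to produce the Lévy--Khintchine constant $C_{\alpha,\Delta}$. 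The Lévy measure lives on $(0,\infty)$ because $Y_{i,n}\ge 0$.

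The decisive ingredient is a sharp lattice local large deviation theorem for $S_n$: uniformly for $y\in h\Z$ with $y/n$ in a fixed neighborhood of $\beta$,
\[
\P(S_n=y)=\frac{h\,(1+o(1))}{\sqrt{2\pi\varphi''(\alpha_y)n}}\,e^{-nI(y/n)},\qquad \varphi'(\alpha_y)=y/n.
\]
I apply this to evaluate
\[
N_n \P(Y_{1,n}/B_n>x)=N_n \sum_{\substack{y\in h\Z\\ y>\log x+b_n}} \P(S_n=y).
\]
A Taylor expansion of $I$ around $\beta$ (using $I'(\beta)=\alpha$) converts the exponent to $nI(\beta)+\alpha(y-\beta n)+o(1)$ across the $O(\log n)$-range that dominates. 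The sum becomes a geometric series with ratio $e^{-\alpha h}$; using $I(\beta)-\alpha\beta=-\varphi(\alpha)$, $N_n e^{-cn}\to 1$, and the definition of $b_n$, every $n$-dependent prefactor cancels and one obtains
\[
N_n \P(Y_{1,n}/B_n>x)=\frac{x^{-\alpha}\,e^{-\alpha\delta_n(x)}}{1-e^{-\alpha h}}\,(1+o(1)),
\]
where $\delta_n(x)\in[0,h)$ is the distance from $\log x+\Delta_n$ to the next larger point of $h\Z$. Along $\Delta_{n_k}\to\Delta$, this converges at each $x\notin \exp(h\Z-\Delta)$ to $(1-e^{-\alpha h})^{-1}e^{-\alpha a^*(x,\Delta)}$, where $a^*(x,\Delta)=\min\{a\in h\Z-\Delta:a>\log x\}$. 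Summing the geometric tail of $\nu_{\alpha,\Delta}$ produces exactly the same expression, verifying (i). Items (ii) and (iii) are handled by the same local LDT applied to the truncated second moment and to the centering respectively; the three sub-regimes $c\in(c_1,c_2)$, $c=c_1$, $c\in(0,c_1)$ for $A_n$ are treated exactly as in Theorem~\ref{theo:stable}.

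The main obstacle, and the reason the statement is subsequential, is that the tail in the last display does \emph{not} have a limit as $n\to\infty$: the fractional part $\Delta_n=\{b_n\}_h$ oscillates in $[0,h)$, which slides the atoms of $Y_{1,n}/B_n$ within $\exp(h\Z)$ relative to the scale $B_n$. Selecting $n_k$ with $\Delta_{n_k}\to\Delta$ freezes these atom positions, and the limit law depends genuinely on $\Delta$; this is an intrinsic feature of the lattice case with no counterpart in Theorem~\ref{theo:stable}. A secondary technical point is to ensure the lattice LDT is uniform on the range $y-\beta n=O(\log n)$ and that the contribution of $|y-\beta n|\gg\log n$ to (i)-(iii) is negligible; both follow from standard Chernoff estimates for $S_n$ under the $\alpha$-tilted measure.
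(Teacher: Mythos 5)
Your proposal is correct and follows essentially the same route as the paper: the classical triangular-array criterion for convergence to infinitely divisible laws, the lattice local large-deviation asymptotics (Part~2 of Theorem~\ref{theo:ld}), a Taylor expansion of $I$ at $\beta$ with $I'(\beta)=\alpha$ so that the choice of $b_n$ cancels all $n$-dependent prefactors, and the same three-regime treatment of the centering as in Theorem~\ref{theo:stable}; the only cosmetic difference is that you verify vague convergence of the tails $N_n\P[Y_{1,n}/B_n>x]$ with atoms on the moving lattice $\exp(h\Z-\Delta_n)$, whereas the paper recenters via $W_n=e^{S_n-[b_n]_h-\Delta}$ so the atoms sit on the fixed set $\exp(h\Z-\Delta)$ and checks point masses $N_{n_k}\P[W_{n_k}=x]\to x^{-\alpha}$. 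One phrase needs tightening: for $\alpha\in[1,2)$ the ratio $A_{n_k}/B_{n_k}$ diverges, so it is not that the centering ``has a limit'' but that the difference $N_{n_k}\E\bigl[W_{n_k}1_{W_{n_k}\le\tau}\bigr]-A_{n_k}/B_{n_k}$ converges, exactly as in the argument of Theorem~\ref{theo:stable} that you invoke.
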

%From Theorem~\ref{theo:stable} and Theorem~\ref{theo:stable_lattice} we obtain the following corollary, valid both in the lattice and in the non-lattice case.
\begin{corollary}\label{cor:tight}
Under the assumptions of Theorem~\ref{theo:stable_lattice},
$$
\left \{\frac{Z_n-A_n}{B_n}, n\in\N \right\}
$$
is a tight sequence of random variables.
\end{corollary}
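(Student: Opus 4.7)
The plan is to deduce tightness from Theorem~\ref{theo:stable_lattice} by the subsequence characterization of tightness: a sequence of Borel probability measures on $\R$ is tight if and only if every subsequence admits a further subsequence that converges weakly. This follows from Prokhorov's theorem, since $\R$ is Polish.

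First I would observe that the ``shape'' parameter $\Delta_n=\{b_n\}_h$ lies in the bounded interval $[0,h]$. Given any subsequence $\{n_j\}_{j\in\N}$ of $\N$, by Bolzano--Weierstrass we can extract a further subsequence $\{n_{j_k}\}_{k\in\N}$ along which $\Delta_{n_{j_k}}$ converges to some $\Delta\in[0,h]$. (If the limit is $h$, this is harmless: under the identification $e^{hn-h}=e^{h(n-1)}$ the support $\exp(h\Z-\Delta)$ coincides with the one corresponding to $\Delta=0$, so $\FF_{\alpha,h}=\FF_{\alpha,0}$.)

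Next I would apply Theorem~\ref{theo:stable_lattice} directly to the extracted sequence $\{n_{j_k}\}$: it gives
$$
\frac{Z_{n_{j_k}}-A_{n_{j_k}}}{B_{n_{j_k}}}\todistr \FF_{\alpha,\Delta},\qquad k\to\infty.
$$
Thus every subsequence of $\{(Z_n-A_n)/B_n\}_{n\in\N}$ contains a weakly convergent sub-subsequence, which (by the characterization cited above) establishes tightness.

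Since all the real work is contained in Theorem~\ref{theo:stable_lattice}, there is no genuine obstacle here; the only thing to verify carefully is the compactness of the range of $\Delta_n$ and the consistency of the limit distributions at the endpoints $\Delta=0$ and $\Delta=h$.
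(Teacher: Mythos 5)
Your proposal is correct and is exactly the argument the paper intends: Corollary~\ref{cor:tight} is stated as an immediate consequence of Theorem~\ref{theo:stable_lattice}, obtained by extracting from any subsequence a further subsequence along which $\Delta_n\in[0,h)$ converges, applying the theorem there, and invoking the Prokhorov-type equivalence between tightness and relative weak compactness on $\R$. Your extra remark about the endpoint $\Delta=h$ is a harmless refinement, since the theorem's hypothesis only requires that $\lim_{k\to\infty}\Delta_{n_k}$ exist.
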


\subsection{Weak laws of large numbers}\label{sec:weak_lln}
In this section we state our results on the stochastic convergence of $Z_n$ and the associated ``finite-scale free energy''  $(1/n)\log Z_n$.  We denote by $\toprobab$ convergence in probability.
\begin{theorem}\label{theo:weak_lln}
Suppose that~\eqref{eq:asympt_N} and~\eqref{eq:def_varphi} hold.
Then, as $n\to\infty$,
\begin{equation}\label{eq:weak_lln}
%P\lim_{n\to\infty}  e^{-(c+\varphi(1))n} Z_n=
e^{-(\varphi(1)+c)n} Z_n\toprobab
\begin{cases}
1, & \text{ if } c>c_1,\\
1/2, &\text{ if } c=c_1.
\end{cases}
\end{equation}
\end{theorem}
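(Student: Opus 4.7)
The plan is to deduce Theorem~\ref{theo:weak_lln} from the distributional limit theorems already stated (Theorems~\ref{theo:normal}--\ref{theo:stable_lattice} and Corollary~\ref{cor:tight}). The unifying observation is that $\E Z_n = N_n e^{\varphi(1)n}\sim e^{(\varphi(1)+c)n}$, so the claim reduces to showing $Z_n/\E Z_n\toprobab 1$ when $c>c_1$ and $Z_n/\E Z_n\toprobab 1/2$ when $c=c_1$. In each regime, the corresponding limit theorem already provides a tight renormalization of $Z_n - A_n$ by some scale $B_n$ (or by $\sqrt{\Var Z_n}$), and it suffices to verify that this scale is $o(\E Z_n)$ and to identify $A_n/\E Z_n$.

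I would proceed by a case analysis in $c$. For $c\ge c_2$, I invoke Theorem~\ref{theo:normal} or~\ref{theo:crit}; since $\Var Z_n = N_n\bigl(e^{\varphi(2)n}-e^{2\varphi(1)n}\bigr)$, the ratio $\sqrt{\Var Z_n}/\E Z_n$ decays exponentially provided $c>\varphi(2)-2\varphi(1)$. I will verify the inequality $c_2=2\varphi'(2)-\varphi(2)>\varphi(2)-2\varphi(1)$ from strict convexity (equivalently, from $\varphi(2)-\varphi(1)<\varphi'(2)$), and then conclude by Slutsky that $(Z_n-\E Z_n)/\E Z_n\toprobab 0$. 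For $c\in(c_1,c_2)$, I use Theorem~\ref{theo:stable} in the non-lattice case and Corollary~\ref{cor:tight} in the lattice case to get that $(Z_n-\E Z_n)/B_n$ is $O_p(1)$, with $A_n=\E Z_n$ in~\eqref{eq:def_An}. The key computation is
\begin{equation*}
\log\bigl(B_n/\E Z_n\bigr)=\bigl(\varphi'(\alpha)-c-\varphi(1)\bigr)n+O(\log n),
\end{equation*}
and the exponent equals $-\bigl((\alpha-1)\varphi'(\alpha)-(\varphi(\alpha)-\varphi(1))\bigr)$ after substituting $c=\alpha\varphi'(\alpha)-\varphi(\alpha)$. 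Strict convexity of $\varphi$ (the tangent to $\varphi$ at $\alpha\ne 1$ strictly underestimates $\varphi(1)$) shows this exponent is strictly negative, so $B_n/\E Z_n\to 0$, and Slutsky finishes the case.

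The case $c=c_1$ is the subtle one. Here $\alpha=1$, so the exponential rates match exactly and only polynomial factors separate $B_n$ from $\E Z_n$. From $B_n=e^{\varphi'(1)n}/\sqrt{2\pi\varphi''(1)n}$ and $\varphi'(1)=c_1+\varphi(1)$, I get $B_n/\E Z_n\sim 1/\sqrt{2\pi\varphi''(1)n}\to 0$, so the fluctuation term is still negligible. The sequence $A_n=N_n\E[e^{S_n}1_{S_n\le b_n}]$ is no longer $\E Z_n$, and this is where Remark~\ref{rem:trunc}, or rather the asymptotic formula~\eqref{eq:asympt_An} proved in Section~\ref{sec:trunc_clt}, does the real work: it gives $A_n/\E Z_n\to 1/2$. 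Combining the two Slutsky-type arguments yields $Z_n/\E Z_n\toprobab 1/2$.

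The only genuine obstacle is the identification $A_n/\E Z_n\to 1/2$ at $c=c_1$, which relies on~\eqref{eq:asympt_An}; everything else is a bookkeeping exercise with the exponential rates. I would therefore present~\eqref{eq:asympt_An} as a lemma derived from a truncated CLT for $S_n=X_{1,1}+\dots+X_{1,n}$ tilted to the exponential measure $e^{S_n-\varphi(1)n}$, so that on the tilted scale the truncation at $b_n\sim \varphi'(1)n$ sits exactly at the mean and contributes the factor $1/2$ in the limit. Once this asymptotic is in hand, the rest of the proof is the short case analysis sketched above.
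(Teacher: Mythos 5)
Your proposal is correct and follows essentially the same route as the paper's proof: tightness of $(Z_n-A_n)/B_n$ from Theorems~\ref{theo:normal}--\ref{theo:stable_lattice} and Corollary~\ref{cor:tight}, the exponential-rate comparisons $\sqrt{\Var Z_n}=o(\E Z_n)$ for $c\geq c_2$ and $B_n=e^{b_n}=o(\E Z_n)$ for $c\in(c_1,c_2]$ via the same convexity inequalities, and the factor $1/2$ at $c=c_1$ produced by the exponential change of measure that places the truncation level $b_n$ at the mean of the tilted sum (the paper packages this as Lemma~\ref{lem:weak_comp} plus Part~\ref{p:1_lem3} of Lemma~\ref{lem:trunc_clt}).

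One small caution: as written you invoke the refined asymptotic~\eqref{eq:asympt_An}, which is proved via Part~\ref{p:2_lem3} of Lemma~\ref{lem:trunc_clt} and Theorem~\ref{theo:asympt_clt} and therefore needs the non-lattice assumption, whereas Theorem~\ref{theo:weak_lln} makes no such assumption. Only the leading-order statement $A_n\sim\frac12 N_n e^{\varphi(1)n}$ is needed, and this follows from the plain central limit theorem after tilting (Part~\ref{p:1_lem3} of Lemma~\ref{lem:trunc_clt}), valid in the lattice case as well --- which is exactly the mechanism you describe in your last paragraph, so citing the weaker statement closes this point.
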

\begin{remark}\label{rem:no_seq}
It will be shown in Section~\ref{sec:proof_rem} that in the case $c<c_1$ there is no sequence $a_n$ with $Z_n/a_n\toprobab 1$ as $n\to\infty$.
\end{remark}

In the next theorem we compute the crude logarithmic asymptotics of $Z_n$. If the distribution of $X$ is Gaussian, we recover a result of~\cite{eisele83}, see also~\cite{olivieri_picco84}, on the free energy in the Random Energy Model.
\begin{theorem}\label{theo:free_energy}
Suppose that~\eqref{eq:asympt_N} and~\eqref{eq:def_varphi} hold. Then, as $n\to\infty$,
\begin{equation}\label{eq:free_energy}
%P\lim_{n\to\infty} \frac 1 n \log Z_n=
\frac 1 n \log Z_n \toprobab
\begin{cases}
\varphi(1)+c, &\text{ if } c\in [c_1,\infty),\\
I^{-1}(c), &\text{ if }c\in (0, c_1],
\end{cases}
\end{equation}
where $I^{-1}$ denotes the inverse function of $I$.
\end{theorem}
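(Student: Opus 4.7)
The case $c\ge c_1$ is immediate from Theorem~\ref{theo:weak_lln}: that result gives $e^{-(\varphi(1)+c)n}Z_n\toprobab 1$ (or $1/2$ when $c=c_1$), so taking logarithms and dividing by $n$, and noting that the multiplicative constant contributes only $O(1/n)$, we obtain $\tfrac{1}{n}\log Z_n\toprobab \varphi(1)+c$. The identity $c_1=\varphi'(1)-\varphi(1)$ from~\eqref{eq:def_c1_c2} gives $\varphi(1)+c_1=\varphi'(1)=I^{-1}(c_1)$, so the two branches of the limit match continuously at $c=c_1$, as they must.

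For $c\in(0,c_1)$, set $\gamma:=I^{-1}(c)\in(\beta_0,\varphi'(1))$, write $S_n^{(i)}:=\sum_{j=1}^n X_{i,j}$ so that $Z_n=\sum_{i=1}^{N_n}e^{S_n^{(i)}}$, and let $S_n$ denote a generic copy. Fix a small $\epsilon>0$. For the lower bound, put $\gamma':=\gamma-\epsilon$ and $K_n:=\#\{i\le N_n:S_n^{(i)}\ge\gamma' n\}\sim\mathrm{Binomial}(N_n,p_n)$ with $p_n=\P(S_n\ge\gamma' n)$. Cramér's theorem gives $\E K_n=N_n p_n=e^{(c-I(\gamma')+o(1))n}\to\infty$ since $I(\gamma')<I(\gamma)=c$; combined with $\Var K_n\le\E K_n$, Chebyshev's inequality yields $\P(K_n=0)\to 0$, so $Z_n\ge e^{\gamma' n}$ with probability tending to $1$.

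For the matching upper bound, choose $\beta:=\gamma+\epsilon/2<\varphi'(1)$ (shrinking $\epsilon$ if needed) and decompose $Z_n=Z_n^{<}+Z_n^{>}$ according to whether $S_n^{(i)}\le\beta n$. The index count contributing to $Z_n^{>}$ is binomial with mean $N_n\P(S_n>\beta n)=e^{(c-I(\beta)+o(1))n}\to 0$, because $I(\beta)>I(\gamma)=c$, so $\P(Z_n^{>}>0)\to 0$. For $Z_n^{<}$, an exponential tilt by $e^{x-\varphi(1)}$ converts $\{S_n\le\beta n\}$ into a left-tail event for the tilted walk, which has mean $\varphi'(1)>\beta$ and rate function $\tilde I(\beta)=I(\beta)-\beta+\varphi(1)>0$ at $\beta$; combining the tilt identity with the Chernoff bound for the tilted walk gives
\begin{equation*}
\E\bigl[e^{S_n}1_{\{S_n\le\beta n\}}\bigr]\le e^{n(\beta-I(\beta))},
\end{equation*}
hence $\E Z_n^{<}\le e^{n(c+\beta-I(\beta)+o(1))}$. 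Since $I(\beta)>I(\gamma)=c$, the exponent satisfies $c+\beta-I(\beta)<\beta<\gamma+\epsilon$, and Markov's inequality gives $\P(Z_n^{<}>e^{(\gamma+\epsilon)n})\to 0$.

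Combining the two estimates, $|\tfrac{1}{n}\log Z_n-\gamma|\le\epsilon$ on an event of probability tending to $1$; since $\epsilon>0$ was arbitrary, $\tfrac{1}{n}\log Z_n\toprobab\gamma=I^{-1}(c)$, completing the subcritical case. The single non-routine ingredient is the sharp tilted Cramér estimate of $\E[e^{S_n}1_{\{S_n\le\beta n\}}]$ used in the upper bound; this is standard once one identifies the tilted rate function $\tilde I$, so I expect no genuine obstacle beyond the large-deviations bookkeeping.
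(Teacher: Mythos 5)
Your proof is correct, and for the subcritical regime it takes a genuinely different route from the paper. For $c\ge c_1$ you argue exactly as the paper does: take logarithms in Theorem~\ref{theo:weak_lln} and note that the constant ($1$ or $1/2$) is washed out by the $1/n$; the identity $\varphi(1)+c_1=\varphi'(1)=I^{-1}(c_1)$ reconciles the two branches. For $c\in(0,c_1)$, however, the paper does \emph{not} argue from scratch: it sets $\beta=I^{-1}(c)=\varphi'(\alpha)$ and deduces both tail bounds $\P[\frac1n\log Z_n>\beta+\eps]\to0$ and $\P[\frac1n\log Z_n<\beta-\eps]\to0$ from the tightness and the non-degeneracy at $0$ of the rescaled sums $Z_n/B_n$ furnished by Theorems~\ref{theo:stable} and~\ref{theo:stable_lattice} (the lattice case being needed precisely to cover all distributions), using that $B_n=e^{\beta n+o(n)}$. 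You instead give a self-contained large-deviation argument: a binomial counting/Chebyshev bound for the lower tail (at least one walk exceeds $(\gamma-\eps)n$, since $N_n\P[S_n\ge(\gamma-\eps)n]\to\infty$ when $I(\gamma-\eps)<c$), and for the upper tail a split at level $\beta n=(\gamma+\eps/2)n$, killing the high part by a union bound ($I(\beta)>c$) and the low part by Markov together with the truncated-moment bound $\E[e^{S_n}1_{S_n\le\beta n}]\le e^{n(\beta-I(\beta))}$. That bound is correct: your tilted rate $\tilde I(\beta)=I(\beta)-\beta+\varphi(1)$ checks out (the maximizer $\theta(\beta)\in(0,1)$ because $\beta_0<\beta<\varphi'(1)$, so the tilt parameter stays in $[-1,0]$ where $\tilde\varphi$ is finite), and it can also be obtained in one line by writing $e^{S_n}\le e^{sS_n}e^{(1-s)\beta n}$ on the event $\{S_n\le\beta n\}$ and optimizing over $s\in[0,1]$. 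What the two approaches buy: the paper's derivation is a few lines once the (much harder) stable and lattice limit theorems are in place, whereas your argument is elementary, needs only Cram\'er/Chernoff estimates, requires no lattice/non-lattice distinction and none of the polynomial corrections in $B_n$, and would establish Theorem~\ref{theo:free_energy} independently of Theorems~\ref{theo:stable} and~\ref{theo:stable_lattice}.
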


It was pointed out in~\cite{eisele83} that an analogue of Theorem~\ref{theo:free_energy} for the sum of random exponentials $S_N(t)$ can be seen as a third-order phase transition in the Random Energy Model. The next proposition shows that the third-order character of the phase transition is preserved if $S_N(t)$ is replaced by $Z_n$.
\begin{proposition}\label{prop:third_order}
Let $f_-(c)=\varphi(1)+c$ and $f_+(c)=I^{-1}(c)$. Then
$f_-(c_1)=f_+(c_1)$ and $f_-'(c_1)=f_+'(c_1)$, but $f_-''(c_1)\neq f_+''(c_1)$.
\end{proposition}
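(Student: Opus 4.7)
The plan is to compute each of the three expressions directly, using nothing beyond the identity $c_1 = \varphi'(1) - \varphi(1)$ from \eqref{eq:def_c1_c2} and the Legendre duality relation
\[
I'(\varphi'(\alpha)) = \alpha, \qquad \alpha > 0,
\]
which follows by differentiating \eqref{eq:I_varphi_prime} with respect to $\alpha$ and dividing by $\varphi''(\alpha)$ (note $\varphi''(\alpha) > 0$ by strict convexity).

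For equality of values, $f_-(c_1) = \varphi(1) + (\varphi'(1) - \varphi(1)) = \varphi'(1)$, while from $c_1 = I(\varphi'(1))$ we read off $f_+(c_1) = I^{-1}(c_1) = \varphi'(1)$. For the first derivatives, obviously $f_-'(c) \equiv 1$, whereas differentiating $I(f_+(c)) = c$ gives $f_+'(c) = 1/I'(f_+(c))$; at $c = c_1$ this evaluates to $1/I'(\varphi'(1)) = 1/1 = 1$ by Legendre duality with $\alpha = 1$.

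For the second derivatives, $f_-''(c) \equiv 0$. Differentiating $f_+'(c) = 1/I'(f_+(c))$ once more and substituting $c = c_1$, where $f_+(c_1) = \varphi'(1)$, $I'(\varphi'(1)) = 1$, $f_+'(c_1) = 1$, yields
\[
f_+''(c_1) = -\frac{I''(\varphi'(1))}{(I'(\varphi'(1)))^{2}} \cdot f_+'(c_1) = - I''(\varphi'(1)).
\]
Differentiating the duality relation $I'(\varphi'(\alpha)) = \alpha$ in $\alpha$ gives $I''(\varphi'(\alpha)) = 1/\varphi''(\alpha)$, so $f_+''(c_1) = -1/\varphi''(1)$. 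Since $X$ is assumed non-degenerate, $\varphi''(1) > 0$ and therefore $f_+''(c_1) = -1/\varphi''(1) \neq 0 = f_-''(c_1)$.

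There is no real obstacle: the statement is a three-line computation once one invokes Legendre duality. The only subtlety worth flagging is that strict convexity of $\varphi$ (hence $\varphi''(1) \in (0,\infty)$) is essential for the inequality in the third step, and this is guaranteed by the standing non-degeneracy assumption on $X$ imposed right after \eqref{eq:def_varphi}.
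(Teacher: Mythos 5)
Your proof is correct, and for the value and the first derivative it coincides with the paper's argument (both rest on $c_1=I(\varphi'(1))=\varphi'(1)-\varphi(1)$, the duality $I'(\varphi'(\alpha))=\alpha$ of Lemma~\ref{lem:I_prime}, and the inverse-function rule). The only divergence is in the last step: the paper disposes of the second derivatives qualitatively, noting that $f_-$ is linear while $f_+=I^{-1}$ is strictly concave as the inverse of the strictly convex $I$, and concludes $f_+''(c_1)<0$; you instead differentiate $f_+'(c)=1/I'(f_+(c))$ once more and combine it with $I''(\varphi'(\alpha))\varphi''(\alpha)=1$ to get the explicit value $f_+''(c_1)=-1/\varphi''(1)$. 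Your route buys a little more: it exhibits the size of the jump in the second derivative, and it makes explicit the positivity of $I''(\varphi'(1))=1/\varphi''(1)$ that the paper's appeal to strict concavity leaves implicit (strict concavity alone does not force a strictly negative second derivative at a given point). One small touch-up: the parenthetical deriving $\varphi''(1)>0$ from strict convexity of $\varphi$ is not quite the right justification — a strictly convex smooth function can have vanishing second derivative at a point; the correct reason, which you also state, is non-degeneracy of $X$, since $\varphi''(1)$ is the variance of $X$ under the exponentially tilted law (cf.\ \eqref{eq:E_tilde X}).
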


\subsection{Example: Uniform stick breaking}
Let us compute the critical points $c_1$ and $c_2$ explicitly assuming that $Y:=e^X$ is uniformly distributed on the interval $[0,1]$. To give a motivation, imagine a large number $N_n$ of sticks of unit length and mass.
Suppose that each stick is broken at a uniformly distributed point, resulting in two fragments. One of the fragments, say the right one, is ignored, whereas the left one is again broken at a uniformly distributed point, and the procedure is repeated $n$ times for each of the $N_n$ sticks. The total mass of the $N_n$ (very small) fragments obtained in this way after $n$ steps is given by $Z_n$  defined in~\eqref{eq:def_w} with $e^X$ being uniformly distributed on $[0,1]$.
To compute $c_1$ and $c_2$, note that
$$
\varphi(t)=\log \E[e^{tX}]=\log \E[Y^t]=\log \int_{0}^1 y^{t}dy=-\log (1+t).
$$
Hence, $\varphi'(t)=-1/(1+t)$. It follows from~\eqref{eq:def_c1_c2} that the critical points $c_1$ and $c_2$ are given by
$$
c_1=\log 2-\frac 12\approx 0.19,\;\;\; c_2=\log 3-\frac 23\approx 0.43.
$$
An easy computation shows that $\beta_0=-1$, $\beta_{\infty}=0$, and $I(\beta)=-(1+\beta)-\log (-\beta)$ for $\beta\in (-1,0)$. For example, Theorem~\ref{theo:free_energy} yields, as $n\to\infty$,
$$
\frac 1 n \log Z_n \toprobab
\begin{cases}
c-\log 2, &\text{ if } c\in [c_1,\infty),\\
-\gamma(c), &\text{ if }c\in (0, c_1],
\end{cases}
$$
where $\gamma=\gamma(c)$ is defined as the unique solution of the equation $\gamma-\log\gamma=1+c$ in the interval $(0,1)$.

Of course, similar explicit calculations can be done  in a number of further special cases, e.g.\ if $Y$ is $B$- or $\Gamma$-distributed.

%\subsubsection{Example: Sums of random powers}
%For every $n\in\N$, let $Y_{n,1},\ldots, Y_{n,N_n}$ be i.i.d.\ random variables with binomial $Bin(n,p)$ distribution where $p\in(0,1)$ is fixed.
%Take $a>0$, $a\neq 1$, and consider the sum of random powers
%$$
%Z_n=\sum_{j=1}^{N_n} a^{Y_{n,j}}.
%$$
%The above formula can be reduced to~\eqref{eq:def_w} by taking  $\{X_{i,j}, (i,j)\in\N^2\}$ to be i.i.d.\ copies of a random variable $X$ whose distribution is given by $\P[X=0]=1-p$, $\P[X=\log a]=p$, and noting that $a^{Y_{n,j}}$ has the same law as $\exp(\sum_{i=1}^n X_{i,j})$. Thus, the behavior of the sum of random powers $Z_n$ is governed by Theorems~\ref{theo:normal}, \ref{theo:crit}, \ref{theo:stable_lattice}, \ref{theo:weak_lln}, \ref{theo:free_energy}.
%The critical points $c_1$ and $c_2$ can be easily computed using~\eqref{eq:def_c1_c2} and the formulas
%$$
%\varphi(t)=\log(1-p+pa^t),\;\;\; \varphi'(t)= (\log a)a^t(1-p+pa^t)^{-1}.
%$$

\subsection{Heuristic arguments}
Some of the above results admit a natural non-rigorous interpretation.  First, we give a non-rigorous argument justifying Theorem~\ref{theo:free_energy}. There are two natural ways to try to guess the limit of $(1/n)\log Z_n$ as $n\to\infty$. The first way is to assume that $Z_n$ is well approximated by its expectation. This leads to the approximation
$$
\frac 1n \log  Z_n\approx \frac 1n \log \E Z_n \approx \frac 1n \log \left(N_n e^{\varphi(1)n}\right) \approx  \varphi(1)+c.
$$
By Theorem~\ref{theo:free_energy}, this gives a correct result provided that $c\in[c_1,\infty)$. The second way is to assume that $Z_n$ is dominated by the maximal summand in~\eqref{eq:def_w}. It can be shown that
\begin{equation}\label{eq:erd_renyi}
\frac 1n \max_{i=1,\ldots, N_n} S_{i,n} \toprobab I^{-1}(c), \;\;\; n\to\infty,
\end{equation}
where $S_{i,n}=\sum_{j=1}^n X_{i,j}$. This leads to the approximation
$$
\frac 1n \log Z_n \approx \frac 1n \log \left(\max_{i=1,\ldots,N_n}e^{S_{i,n}}\right) \approx \frac 1n \log \left( e^{I^{-1}(c)n} \right)=I^{-1}(c).
$$
By Theorem~\ref{theo:free_energy}, this gives a correct result provided that $c\in(0,c_1]$.

Let us return to~\eqref{eq:erd_renyi}. It may be thought of as a simplified (decoupled) version of the Erd\"os-R\'enyi law of large numbers, see e.g.~\cite[Theorem~2.4.3]{csorgo_book81}. A statement which is more precise than~\eqref{eq:erd_renyi} was proved in Theorem~2 of~\cite{ivchenko73} (see also~\cite{durrett79}) where it was shown that the left-hand side of~\eqref{eq:erd_renyi} has limiting Gumbel distribution.  Moreover, it was shown there that the upper order statistics of the sequence $\{S_{i,n}\}_{i=1}^{N_n}$ can be approximated, as $n\to\infty$, by the point process $\{\alpha^{-1} U_i+b_n\}_{i=1}^{\infty}$, where $\{U_i\}_{i=1}^{\infty}$ is a Poisson point process on $\R$ having intensity $e^{-u}du$, and $\alpha$ and $b_n$ are as in Theorem~\ref{theo:stable}.  Hence, we obtain an approximation
$$
e^{-b_n} Z_n \approx  \sum_{i=1}^{\infty} e^{\alpha^{-1} U_i}, \;\;\; n\to\infty.
$$
At least for $\alpha\in (0,1)$, the right-hand side converges a.s.\ and has an $\alpha$-stable distribution with skewness parameter $+1$. This gives a non-rigorous justification of Theorem~\ref{theo:stable} in the case $\alpha\in (0,1)$. This argument is close to the approach used in~\cite{schlather01} and~\cite{bovier_etal02}, and could be turned into a rigorous proof. However, we prefer to proceed in a different way. Our proof of Theorem~\ref{theo:stable} will be based on two ingredients. The first ingredient is the classical summation theory for triangular arrays of independent random variables~\cite{gnedenko_book54}. This approach is rather natural in view of the triangular array structure of~\eqref{eq:def_w}, and was suggested in~\cite{ben_arous_etal05}. The second ingredient is a precise version of Cram\'er's large deviations theorem which will be recalled in Section~\ref{sec:ld}.

\subsection{Organization of the paper}
Theorems~\ref{theo:normal}, \ref{theo:crit}, \ref{theo:stable}, \ref{theo:stable_lattice} will be proved in Sections~\ref{sec:proof_normal}, \ref{sec:proof_crit}, \ref{sec:proof_stable}, \ref{sec:proof_stable_lat}, respectively. The weak laws of large numbers of Section~\ref{sec:weak_lln} will be then derived in Section~\ref{sec:proof_lln} as consequences of the results of Section~\ref{sec:res_lim_distr}.

\section{Facts about precise large deviations}\label{sec:ld}
Throughout the rest of the paper let $\{X_i: i\in\N\}$ be i.i.d.\ copies of a random variable $X$ satisfying~\eqref{eq:def_varphi}, and denote by $S_n=X_1+\ldots+X_n$ their partial sums.
The following theorem on the precise asymptotic behavior of large deviation probabilities for sums of i.i.d\ variables due to~\cite{bahadur_rao60} and~\cite{petrov65} will play a crucial role in the sequel.
\begin{theorem}\label{theo:ld}
Suppose that~\eqref{eq:def_varphi} is satisfied.
Let $\beta\in(\beta_0,\beta_{\infty})$ and define $\alpha>0$ to be the unique solution of the equation $\varphi'(\alpha)=\beta$.
\begin{enumerate}
\item \label{p:1_th_petrov}
Assume that the distribution of $X$ is non-lattice.  Then
$$
\P[S_n\geq n\beta]\sim \frac{1}{\alpha \sqrt{2\pi \varphi''(\alpha)n}} e^{-n I(\beta)},
\;\;\; n\to\infty.
$$
\item \label{p:2_th_petrov}
Assume that the distribution of $X$ is lattice, and that Assumption~\ref{ass:non_lattice} is fulfilled for some  $h>0$. Then the following holds if $\beta\in n^{-1}h\Z$:
$$
\P[S_n = n\beta]\sim \frac{h}{\sqrt{2\pi \varphi''(\alpha)n}}   e^{-n I(\beta)},
\;\;\; n\to\infty.
$$
\end{enumerate}
Both statements hold uniformly in $\beta\in K$ for any compact set $K\subset (\beta_0,\beta_{\infty})$.
\end{theorem}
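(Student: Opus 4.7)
The plan is to derive both parts from the classical exponential tilting (Esscher transform) combined with an appropriate local central limit theorem for the tilted law. Fix $\beta\in(\beta_0,\beta_\infty)$ and let $\alpha>0$ be the unique solution of $\varphi'(\alpha)=\beta$. Introduce a tilted probability measure $\tilde\P$ on $\sigma(X_1,\ldots,X_n)$ by setting $d\tilde\P/d\P=e^{\alpha S_n-n\varphi(\alpha)}$. Under $\tilde\P$ the variables $X_1,\ldots,X_n$ are still i.i.d., with cumulant generating function $\tilde\varphi(t)=\varphi(\alpha+t)-\varphi(\alpha)$, so that $\tilde\E[X_1]=\varphi'(\alpha)=\beta$ and $\tilde\Var[X_1]=\varphi''(\alpha)$. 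Setting $T_n=S_n-n\beta$, the identity $\alpha\beta-\varphi(\alpha)=I(\beta)$ from~\eqref{eq:I_varphi_prime} together with reversal of the tilt yield
\[
\P[S_n\geq n\beta]=e^{-nI(\beta)}\,\tilde\E\bigl[e^{-\alpha T_n}1_{T_n\geq 0}\bigr],\qquad \P[S_n=n\beta]=e^{-nI(\beta)}\,\tilde\P[T_n=0].
\]

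For part~(1), the non-lattice case, I would estimate $\tilde\E[e^{-\alpha T_n}1_{T_n\geq 0}]$ as follows. On any fixed interval $[0,A]$ the Stone (or Bahadur--Rao smoothing) local central limit theorem for the non-lattice tilted law gives
\[
\tilde\E\bigl[e^{-\alpha T_n}1_{0\leq T_n\leq A}\bigr]=\bigl(1+o(1)\bigr)\frac{1}{\sqrt{2\pi n\varphi''(\alpha)}}\int_0^A e^{-\alpha u}\,du,
\]
because the Gaussian factor $e^{-u^2/(2n\varphi''(\alpha))}$ tends to $1$ uniformly on $[0,A]$. The tail contribution from $\{T_n>A\}$ is of order $e^{-\alpha A}/\sqrt{n}$ uniformly in $n$, so letting $A\to\infty$ after sending $n\to\infty$ produces $\int_0^\infty e^{-\alpha u}du=1/\alpha$ and yields the claimed asymptotic. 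For part~(2), the lattice case, the tilted law is supported on the same lattice $h\Z$, so Gnedenko's local limit theorem applied to $T_n$ under $\tilde\P$ directly gives $\tilde\P[T_n=0]\sim h/\sqrt{2\pi n\varphi''(\alpha)}$.

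Uniformity in $\beta$ on a compact set $K\subset(\beta_0,\beta_\infty)$ follows because the continuous map $\beta\mapsto\alpha(\beta)=(\varphi')^{-1}(\beta)$ sends $K$ to a compact subset of $(0,\infty)$; on such a range both $\varphi''(\alpha)$ and the Fourier-analytic estimates entering the local CLTs behave uniformly in $\alpha$.

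The main obstacle is precisely this uniform local CLT step. In the non-lattice case the proof proceeds via Fourier inversion: one splits the integral over the characteristic function into a neighbourhood of the origin, where Taylor expansion of $\tilde\varphi$ yields the Gaussian contribution with explicit remainder, and its complement, where one must exploit a strict bound $\sup_{|\xi|\geq\eps}|\hat{\tilde\mu}_\alpha(\xi)|<1$ inherited from the non-lattice property of the original distribution (with $\tilde\mu_\alpha$ denoting the tilted law of $X_1$). Making this spectral-gap bound uniform as $\alpha$ varies over a compact subset of $(0,\infty)$ is the only nontrivial analytic ingredient; once this is available, both parts of the theorem follow from the elementary computations sketched above.
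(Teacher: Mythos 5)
The paper does not prove this statement at all: Theorem~\ref{theo:ld} is quoted as a known result of Bahadur--Rao and Petrov, and the text's only ``proof'' is the citation. Your proposal reconstructs the classical argument behind those references, and its skeleton is correct: the tilting identities $\P[S_n\geq n\beta]=e^{-nI(\beta)}\,\tilde\E\bigl[e^{-\alpha T_n}1_{T_n\geq 0}\bigr]$ and $\P[S_n=n\beta]=e^{-nI(\beta)}\,\tilde\P[T_n=0]$ follow exactly as you say from $\alpha\beta-\varphi(\alpha)=I(\beta)$, cf.~\eqref{eq:I_varphi_prime}, the lattice case then reduces to Gnedenko's local limit theorem for the tilted walk, and the non-lattice case reduces to a Stone-type local estimate on $[0,A]$ plus a uniform concentration bound $\sup_a\tilde\P[T_n\in[a,a+1)]\leq C/\sqrt n$ to control the tail $\{T_n>A\}$ by $Ce^{-\alpha A}/\sqrt n$. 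The uniformity over $\beta\in K$ via compactness of $\alpha(K)\subset(0,\infty)$ is also the right mechanism.

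One technical claim in your last paragraph is wrong as stated and should be repaired: for a general non-lattice law one does \emph{not} have $\sup_{|\xi|\geq\eps}|\hat{\tilde\mu}_\alpha(\xi)|<1$; non-latticeness only gives $|\hat{\tilde\mu}_\alpha(\xi)|<1$ for each fixed $\xi\neq 0$, hence a strict bound on compact sets $\eps\leq|\xi|\leq M$, while $\limsup_{|\xi|\to\infty}|\hat{\tilde\mu}_\alpha(\xi)|$ can equal $1$ (e.g.\ for discrete non-lattice distributions). This is precisely why Bahadur--Rao and Stone work with a smoothing device (convolution with a kernel whose characteristic function has compact support) rather than a naive Fourier inversion; since you already name ``Stone (or Bahadur--Rao smoothing)'' as the tool, the fix is to rely on that formulation and drop the global spectral-gap claim. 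With that correction, and with the routine verification that the compact-$\xi$ bound and the Taylor expansion of $\tilde\varphi$ can be made uniform for $\alpha$ in a compact subset of $(0,\infty)$ (joint continuity of $(\alpha,\xi)\mapsto\hat{\tilde\mu}_\alpha(\xi)$ plus compactness), your sketch is a faithful proof outline of the cited theorem; as a self-contained proof it still leaves the uniform local CLT itself to the literature, which is no worse than what the paper does.
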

The next lemma holds both in the lattice and in the non-lattice case.
\begin{lemma}\label{cor:ld}
Suppose that~\eqref{eq:def_varphi} is satisfied.
Let $K$ be a compact subset of $(\beta_0, \beta_{\infty})$. Then there is a constant $C$ depending on $K$ such that for all $\beta\in K$ and for all $n\in\N$,
$$
\P[S_n\geq n\beta]\leq C n^{-1/2} e^{-n I(\beta)}.
$$
\end{lemma}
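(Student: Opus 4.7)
The lemma is precisely an $n$-uniform version of Theorem~\ref{theo:ld}, so the strategy is to invoke that theorem (with its uniform-in-$\beta$ convergence) for $n$ above a threshold $n_0(K)$ and to absorb the finitely many small $n$ into the constant via the Chernoff bound $\P[S_n\geq n\beta]\leq e^{-nI(\beta)}$, which gives $\P[S_n\geq n\beta]\leq\sqrt{n_0}\,n^{-1/2}e^{-nI(\beta)}$ for all $n\leq n_0$. It thus suffices to establish the bound for large $n$.

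\textbf{Non-lattice case.} Theorem~\ref{theo:ld}(\ref{p:1_th_petrov}) gives $\P[S_n\geq n\beta]\sim \kappa(\beta) n^{-1/2}e^{-nI(\beta)}$ uniformly in $\beta\in K$, where $\kappa(\beta)=1/\bigl(\alpha(\beta)\sqrt{2\pi\varphi''(\alpha(\beta))}\bigr)$. Since $\alpha(\cdot)=(\varphi')^{-1}$ is continuous and strictly positive and $\varphi''>0$ is continuous, $\kappa$ is continuous and bounded above on the compact set $K$. Uniform convergence then yields $n_0$ and $C_1$ depending only on $K$ with $\P[S_n\geq n\beta]\leq C_1 n^{-1/2}e^{-nI(\beta)}$ for all $n\geq n_0$ and $\beta\in K$, which combined with Chernoff proves the lemma.

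\textbf{Lattice case.} Now Theorem~\ref{theo:ld}(\ref{p:2_th_petrov}) bounds only point probabilities, so I need an additional summation step. Fix $b'\in(\sup K,\beta_\infty)$, set $K'=[\inf K,b']$, and split
\[
\P[S_n\geq n\beta]=\!\!\!\sum_{\substack{s\in h\Z\\n\beta\leq s\leq nb'}}\!\!\P[S_n=s]\;+\;\P[S_n>nb'].
\]
Each $s$ in the first sum has $s/n\in K'$, so Theorem~\ref{theo:ld}(\ref{p:2_th_petrov}) (uniform on $K'$) gives $\P[S_n=s]\leq C_2 n^{-1/2}e^{-nI(s/n)}$ for $n\geq n_0(K')$. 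The essential step is to collapse $\sum e^{-nI(s/n)}$ into a geometric series. Differentiating \eqref{eq:I_varphi_prime} in $\alpha$ yields the Legendre identity $I'(\beta)=\alpha(\beta)$, so strict convexity of $I$ gives $I(s/n)\geq I(\beta)+\alpha(\beta)(s/n-\beta)$; writing $s=s_0+hk$ with $s_0\in[n\beta,n\beta+h)$ the smallest lattice point $\geq n\beta$ and $k=0,1,2,\ldots$, we get
\[
\sum_{\substack{s\in h\Z\\s\geq n\beta}}\!\! e^{-nI(s/n)}\leq e^{-nI(\beta)}\sum_{k=0}^{\infty}e^{-\alpha(\beta)hk}=\frac{e^{-nI(\beta)}}{1-e^{-\alpha(\beta)h}},
\]
with denominator bounded below on $K$ since $\alpha(\beta)$ is. The tail is harmless: by Chernoff, $\P[S_n>nb']\leq e^{-nI(b')}\leq e^{-n\delta}e^{-nI(\beta)}$ with $\delta=I(b')-\sup_K I>0$, which is exponentially smaller than $n^{-1/2}e^{-nI(\beta)}$.

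\textbf{Main obstacle.} The only delicate point is the lattice case: a naive bound would sum $O(n)$ terms each of size $C n^{-1/2}e^{-nI(\beta)}$ and yield the useless estimate $O(\sqrt{n})\,e^{-nI(\beta)}$. Exploiting convexity of $I$ to obtain linear decay in the exponent, and thereby a convergent geometric series with ratio $e^{-\alpha(\beta)h}<1$, is the key trick.
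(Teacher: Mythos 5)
Your proof is correct, but in the lattice case it takes a genuinely different route from the paper. For non-lattice $X$ both arguments are the same (uniform convergence in Theorem~\ref{theo:ld}(\ref{p:1_th_petrov}) plus boundedness of the prefactor on $K$, with small $n$ absorbed via the Chernoff bound of Lemma~\ref{lem:ld_est0}). In the lattice case the paper does not sum local probabilities: it invokes a stronger external result (Part~II of Theorem~6 in Petrov), which already gives the \emph{tail} asymptotics $\P[S_n\geq n\beta]\sim \frac{1}{1-e^{-\alpha h}}\frac{h}{\sqrt{2\pi\varphi''(\alpha)n}}e^{-nI(\beta)}$ uniformly for lattice-valued $\beta$, and then handles general $\beta\in K$ by rounding down to the nearest admissible lattice point $\beta_n$ and using convexity of $I$ to show $I(\beta_n)\geq I(\beta)-mn^{-1}$. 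You instead rebuild the tail bound from the point-mass estimate of Theorem~\ref{theo:ld}(\ref{p:2_th_petrov}), using the tangent-line bound $I(s/n)\geq I(\beta)+\alpha(\beta)(s/n-\beta)$ to collapse the lattice sum into a geometric series with ratio $e^{-\alpha(\beta)h}$ (uniformly summable since $\alpha=I'$ is bounded below on $K$), plus a Chernoff estimate for the far tail $s>nb'$ — in effect re-deriving the factor $(1-e^{-\alpha h})^{-1}$ that Petrov's integrated result supplies for free. Your version has the advantage of relying only on statements already quoted in the paper; the paper's version is shorter and, as stated, covers an arbitrary lattice $a+h\Z$ directly, whereas Theorem~\ref{theo:ld}(\ref{p:2_th_petrov}) is formulated under Assumption~\ref{ass:non_lattice} (values in $h\Z$), so your argument should note the harmless reduction $X\mapsto X-a$ (which shifts $\beta_0,\beta_\infty,K$ and leaves $I$ and $\P[S_n\geq n\beta]$ intact) before summing over $s\in h\Z$. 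With that one-line remark added, your proof is complete.
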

\begin{proof}
In the non-lattice case the lemma follows immediately from Part~\ref{p:1_th_petrov} of Theorem~\ref{theo:ld}. In the lattice case, we assume that there exist $a\in\R$ and $h>0$ such that the values of $X$ belong a.s.\ to the lattice  $a+h\Z$, and that $h$ is maximal with this property. It was proved in Part~II of Theorem~6 in~\cite{petrov65} that the following asymptotic relation holds uniformly in $\beta\in K$ as long as the values of $\beta$ are restricted to the lattice $a+hn^{-1}\Z$:
\begin{equation}\label{eq:petrov_latt}
\P[S_n \geq  n\beta]\sim \frac{1}{(1-e^{-\alpha h})}\frac{h}{\sqrt{2\pi \varphi''(\alpha)n}}   e^{-n I(\beta)},
\;\;\; n\to\infty.
\end{equation}
For general $\beta\in K$ set
$$
\beta_n=\max\{x\in a+hn^{-1}\Z : x\leq \beta\}.
$$
Note that $\beta_n>\beta-hn^{-1}$. Then, by convexity of $I$,
\begin{equation}\label{eq:ineq_122}
I(\beta_n)\geq I(\beta)+I'(\beta)(\beta_n-\beta)\geq I(\beta)-hn^{-1}I'(\beta)\geq I(\beta)-mn^{-1},
\end{equation}
where $m$ is a constant which is uniform in $\beta\in K$. We have, evidently,
\begin{equation}\label{eq:ineq_177}
\P[S_n \geq  n\beta] \leq \P[S_n \geq  n\beta_n].
\end{equation}
Applying to the right-hand side of~\eqref{eq:ineq_177} first~\eqref{eq:petrov_latt} and then~\eqref{eq:ineq_122}, we obtain
$$
\P[S_n \geq  n\beta]\leq C_1 n^{-1/2} e^{-n I(\beta_n)}
\leq C_1 n^{-1/2} e^{-n (I(\beta)-mn^{-1})}
\leq C_2 n^{-1/2} e^{-n I(\beta)}.
$$
%\begin{align*}
%\P[S_n \geq  n\beta]
%&\leq \P[S_n \geq  n\beta_n]\\
%&\leq C_1 n^{-1/2} e^{-n I(\beta_n)}\\
%&\leq C_1 n^{-1/2} e^{-n (I(\beta)+cn^{-1})}\\
%&\leq C_2 n^{-1/2} e^{-n I(\beta)}.
%\end{align*}
This completes the proof in the lattice case.
\end{proof}
%The next simple large deviation estimate will be often useful.
%Its proof is a standard argument based on the Markov inequality.
The next two lemmas are standard.
\begin{lemma}\label{lem:ld_est0}
For every $\beta> \beta_0$ and every $n\in\N$,
\begin{equation}\label{eq:ineq_markov}
\P[S_n\geq n\beta]\leq e^{-nI(\beta)}.
\end{equation}
\end{lemma}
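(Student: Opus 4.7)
The plan is to use the exponential Markov (Chernoff) inequality, which is the standard route to any ``exponential tilting'' upper bound in large deviations. Since $I(\beta)$ is defined as a supremum over $t\geq 0$, and the right-hand side $e^{-nI(\beta)}$ is correspondingly the infimum of a family of Chernoff bounds, it suffices to produce one such bound for every $t\geq 0$ and then optimize.

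Concretely, for an arbitrary $t\geq 0$, I would apply Markov's inequality to the nonnegative random variable $e^{tS_n}$:
$$
\P[S_n\geq n\beta]=\P\bigl[e^{tS_n}\geq e^{tn\beta}\bigr]\leq e^{-tn\beta}\E[e^{tS_n}].
$$
Because $S_n=X_1+\cdots+X_n$ is a sum of i.i.d.\ copies of $X$, the moment generating function factorizes as $\E[e^{tS_n}]=(\E[e^{tX}])^n=e^{n\varphi(t)}$, which is finite by assumption~\eqref{eq:def_varphi}. Hence
$$
\P[S_n\geq n\beta]\leq e^{-n(\beta t-\varphi(t))}\quad\text{for every }t\geq 0.
$$

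Taking the infimum of the right-hand side over $t\geq 0$, i.e.\ the supremum of $\beta t-\varphi(t)$ over $t\geq 0$, and using the definition~\eqref{eq:def_I} of the Legendre--Fenchel transform gives
$$
\P[S_n\geq n\beta]\leq e^{-n\sup_{t\geq 0}(\beta t-\varphi(t))}=e^{-nI(\beta)},
$$
as required.

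There is essentially no obstacle: the restriction $\beta>\beta_0=\E X$ is exactly what guarantees that $I(\beta)$ is well-defined and (together with strict convexity of $\varphi$) that the supremum in~\eqref{eq:def_I} is attained at some $t\geq 0$, so that the Chernoff optimization is non-vacuous. The only point worth noting is that the trivial choice $t=0$ already yields $\P[S_n\geq n\beta]\leq 1=e^{-n\cdot 0}$, so the inequality is automatic when $I(\beta)=0$; the content lies in the regime $\beta>\beta_0$ where $I(\beta)>0$ and exponential decay is obtained.
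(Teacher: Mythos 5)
Your proof is correct and is essentially identical to the paper's own argument: apply Markov's inequality to $e^{tS_n}$ for arbitrary $t\geq 0$, use $\E[e^{tS_n}]=e^{n\varphi(t)}$, and optimize over $t$ via the definition~\eqref{eq:def_I} of $I(\beta)$ as a supremum. Nothing further is needed.
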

\begin{proof}
For every fixed $t\geq 0$, the Markov inequality yields
$$
\P[S_n\geq n\beta]\leq e^{-n \beta t} \E[e^{tS_n}] = e^{-n(\beta t- \varphi(t))}.
$$
Recalling that by~\eqref{eq:def_I},  $I(\beta)=\sup_{t\geq 0} (\beta t- \varphi(t))$, we arrive at~\eqref{eq:ineq_markov}.
\end{proof}

\begin{lemma}\label{lem:I_prime}
For every $\alpha>0$, we have $I'(\varphi'(\alpha))=\alpha$.
\end{lemma}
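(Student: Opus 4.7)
The plan is to differentiate the identity~\eqref{eq:I_varphi_prime}, namely
$I(\varphi'(\alpha)) = \alpha \varphi'(\alpha) - \varphi(\alpha)$ for $\alpha > 0$,
with respect to $\alpha$ using the chain rule. This is essentially an envelope-theorem calculation: since the supremum defining $I$ at $\beta = \varphi'(\alpha)$ is attained at the interior point $t = \alpha$, the derivative with respect to $\beta$ should simply pick out the optimizing $t$.

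First I would note that the map $\alpha \mapsto \varphi'(\alpha)$ sends $(0,\infty)$ into $(\beta_0, \beta_\infty)$, the interval on which $I$ is infinitely differentiable (as recalled after~\eqref{eq:def_I}), so that $I'(\varphi'(\alpha))$ is well-defined for every $\alpha > 0$. Differentiating the left-hand side of~\eqref{eq:I_varphi_prime} via the chain rule gives $I'(\varphi'(\alpha)) \cdot \varphi''(\alpha)$, while differentiating the right-hand side gives $\varphi'(\alpha) + \alpha \varphi''(\alpha) - \varphi'(\alpha) = \alpha \varphi''(\alpha)$. Equating these yields
\[
I'(\varphi'(\alpha)) \cdot \varphi''(\alpha) = \alpha \varphi''(\alpha).
\]

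The final step is to cancel $\varphi''(\alpha)$. This is legitimate because $\varphi$ is strictly convex on $(0,\infty)$, as stated after~\eqref{eq:def_varphi}, so $\varphi''(\alpha) > 0$ for every $\alpha > 0$ (strict positivity, not just nonnegativity, follows from the fact that $X$ is assumed non-degenerate, which makes $\varphi$ strictly convex and hence $\varphi''$ strictly positive on the interior of its domain). Dividing by $\varphi''(\alpha)$ delivers $I'(\varphi'(\alpha)) = \alpha$, as claimed.

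There is no real obstacle here; the only point that needs a brief mention is why $\varphi''(\alpha) \neq 0$, which is where non-degeneracy of $X$ enters. Everything else is a one-line application of the chain rule to the already established identity~\eqref{eq:I_varphi_prime}.
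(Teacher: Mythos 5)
Your proof is correct and is essentially the paper's own argument: the paper differentiates the same Legendre-duality identity, written as $I(\beta)=\beta\theta(\beta)-\varphi(\theta(\beta))$ with $\theta=(\varphi')^{-1}$, in the variable $\beta$, and the same envelope-type cancellation occurs before substituting $\beta=\varphi'(\alpha)$. The only cosmetic difference is that you differentiate in $\alpha$ and cancel $\varphi''(\alpha)>0$ --- a fact that holds because $\varphi''(\alpha)$ is the variance of the exponentially tilted law of the non-degenerate $X$ (strict convexity alone would not force a nonvanishing second derivative), while the paper instead works through $\theta'(\beta)$.
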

\begin{proof}
Let $\beta\in (\beta_0,\beta_{\infty})$. Denote by $\theta(\beta)$  the unique solution of the equation $\varphi'(\theta(\beta))=\beta$. Then the supremum in~\eqref{eq:def_I} is attained at $t=\theta(\beta)$ and hence,
$$
I(\beta)=\beta \theta(\beta)-\varphi(\theta(\beta)).
$$
We will show that $I'(\beta)=\theta(\beta)$.
We have
$$
I'(\beta)=(b\theta(b)-\varphi(\theta(b)))'|_{b=\beta}=\theta(\beta)+\beta\theta'(\beta)-\varphi'(\theta(\beta))\theta'(\beta)=\theta(\beta).
$$
Setting $\beta=\varphi'(\alpha)$ and taking into account that $\varphi'(\theta(\alpha))=\alpha$, we obtain the statement of the lemma.
\end{proof}

The next theorem gives a first term in the asymptotic expansion in the central limit theorem, see Theorem~5.22 of~\cite{petrov_book95}. The only place where we will use it is the proof of Remark~\ref{rem:trunc} in Section~\ref{sec:trunc_clt}.
\begin{theorem}\label{theo:asympt_clt}
Let $\{X_i, i\in\N\}$ be i.i.d.\ zero-mean random variables having a non-lattice distribution and finite third moment. Let $\mu_2=\E X_1^2$ and $\mu_3=\E X_1^3$.
Then, uniformly in $x\in\R$,
$$
\Phi_n(x):=\P\left[\frac{S_n}{\sqrt{\mu_2 n}} \leq x\right]
= \Phi(x)+\frac{\mu_3(1-x^2)e^{-x^2/2}}{6\sqrt{2\pi n} \mu_2^{3/2}}+o\left(\frac{1}{\sqrt n}\right),\;\;\; n\to\infty.
$$
Here, $\Phi(x)=(2\pi)^{-1/2}\int_{-\infty}^x e^{-t^2/2}dt$ is the standard normal distribution function.
\end{theorem}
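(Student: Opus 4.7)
The plan is to follow the classical Fourier-analytic proof of the first-order Edgeworth expansion, which combines Esseen's smoothing inequality with a careful Taylor expansion of the characteristic function near zero and an exponential bound away from zero supplied by the non-lattice hypothesis.

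Denote by $\chi(u)=\E[e^{iuX_1}]$ the characteristic function of $X_1$, so that $f_n(t):=\chi(t/\sqrt{\mu_2 n})^n$ is the characteristic function of $S_n/\sqrt{\mu_2 n}$. Since the first three moments are finite, $\chi(u)=1-\mu_2 u^2/2-i\mu_3 u^3/6+o(u^3)$ as $u\to 0$; taking $n$-th powers gives, for $|t|\leq\delta\sqrt n$ with $\delta$ small,
$$f_n(t)=e^{-t^2/2}\left(1-\frac{i\mu_3 t^3}{6\sqrt n\,\mu_2^{3/2}}\right)+r_n(t),\qquad r_n(t)=o(n^{-1/2})e^{-t^2/4}\text{ uniformly.}$$
The candidate $G_n(x)=\Phi(x)+\mu_3(1-x^2)e^{-x^2/2}/(6\sqrt{2\pi n}\,\mu_2^{3/2})$ is the primitive of the signed density $g_n(x)=\varphi_0(x)+\frac{\mu_3}{6\sqrt n\,\mu_2^{3/2}}(x^3-3x)\varphi_0(x)$, where $\varphi_0$ is the standard normal density; using $(x^3-3x)\varphi_0=-\varphi_0'''$ and $\widehat{\varphi_0^{(k)}}(t)=(-it)^k e^{-t^2/2}$, one checks that the Fourier transform of $g_n$ is exactly the main term above.

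Apply Esseen's inequality with $T=\eps\sqrt n$: its remainder term $O(T^{-1}\sup_x|g_n'(x)|)$ is $O(n^{-1/2})$, and we are left to bound $\int_{-T}^{T}|(f_n(t)-\hat g_n(t))/t|\,dt$. Split this integral at $|t|=\delta\sqrt n$. On $|t|\leq\delta\sqrt n$ the Taylor remainder controls the integrand by $o(n^{-1/2})e^{-t^2/4}/|t|$, integrable near $0$ because $f_n(t)-\hat g_n(t)=O(t)$ as $t\to 0$, giving total contribution $o(n^{-1/2})$. On $\delta\sqrt n\leq|t|\leq\eps\sqrt n$ the substitution $u=t/\sqrt{\mu_2 n}$ confines $u$ to a fixed compact subset of $\R\setminus\{0\}$; by the non-lattice hypothesis $|\chi(u)|<1$ for every $u\neq 0$, and continuity yields $|\chi(u)|\leq\eta<1$ uniformly, so $|f_n(t)|\leq\eta^n$ and this contribution is exponentially small. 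Summing the three pieces gives $\sup_x|\Phi_n(x)-G_n(x)|=o(n^{-1/2})$, which is the claim.

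The main obstacle is \emph{uniformity in $x$}: a pointwise characteristic-function computation establishes the expansion only at each fixed $x$, whereas Esseen's inequality is what upgrades this to a uniform estimate, and it forces one to control $f_n$ on the whole range $|t|\leq\eps\sqrt n$ rather than on a fixed interval. The delicate input that makes the outer piece negligible is the non-lattice assumption, which prevents $|\chi|$ from returning to $1$ on any compact subset of $\R\setminus\{0\}$; in the lattice case this fails and an extra oscillatory correction term appears in the expansion, which is why the theorem is stated only for non-lattice $X$.
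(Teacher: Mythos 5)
The paper does not prove this statement at all: Theorem~\ref{theo:asympt_clt} is imported verbatim from Petrov's book (Theorem~5.22 there), so the only question is whether your sketch of the classical Esseen-smoothing/characteristic-function proof is sound. Your overall architecture is the right one (and is essentially the argument behind the cited result): the identification of the Fourier transform of the Edgeworth approximant as $\hat g_n(t)=e^{-t^2/2}\bigl(1-\tfrac{i\mu_3 t^3}{6\sqrt n\,\mu_2^{3/2}}\bigr)$ is correct, the split of the Esseen integral at $|t|=\delta\sqrt n$ is correct, and the use of the non-lattice hypothesis to get $|\chi(u)|\le\eta<1$ on a compact annulus is exactly where that hypothesis enters. (One presentational point: near $t=0$ you need the Taylor-remainder bound in the simultaneous form $|f_n(t)-\hat g_n(t)|\le \eps_n n^{-1/2}\,(|t|^3+|t|^6)e^{-t^2/4}$ with $\eps_n\to0$, rather than the two separate statements ``$o(n^{-1/2})e^{-t^2/4}$ uniformly'' and ``$O(t)$ as $t\to0$''; the standard computation does deliver this, so this is a matter of stating the lemma correctly, not a flaw in the method.)

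There is, however, a genuine gap in the final step. With $T=\eps\sqrt n$ for a \emph{fixed} $\eps$, Esseen's smoothing term is of order $\sup_x|g_n'(x)|/T\asymp 1/(\eps\sqrt n)$, which — as you yourself write — is $O(n^{-1/2})$, not $o(n^{-1/2})$. Summing your three pieces therefore yields only $\sup_x|\Phi_n(x)-G_n(x)|=O(n^{-1/2})$, which is strictly weaker than the theorem; the $o(n^{-1/2})$ error is the entire content here, since the correction term being extracted is itself of order $n^{-1/2}$ (this is what the paper needs in Lemma~\ref{lem:trunc_clt}, Part~\ref{p:2_lem3}). You also cannot simply take $T/\sqrt n\to\infty$ inside your argument as written, because then $u=t/\sqrt{\mu_2 n}$ leaves every fixed compact set, and the non-lattice hypothesis alone gives no uniform bound $|\chi(u)|\le\eta<1$ on unbounded regions (that would require Cram\'er's condition). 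The standard repair: for each fixed $a>0$ run the argument with $T=a\sqrt n$ and $\eta_a:=\sup\{|\chi(u)|:\delta/\sqrt{\mu_2}\le|u|\le a/\sqrt{\mu_2}\}<1$, concluding $\limsup_{n\to\infty}\sqrt n\,\sup_x|\Phi_n(x)-G_n(x)|\le C/a$; then let $a\to\infty$. (Equivalently, a diagonal choice $T=a_n\sqrt n$ with $a_n\to\infty$ slowly, adapted to the constants $\eta_{a_n}$.) With that one additional limiting step your proof is complete.
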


\section{Proof of Theorem~\ref{theo:normal}}\label{sec:proof_normal}
The proof will be based on the classical summation theory for triangular arrays of independent random variables, a method suggested in~\cite{ben_arous_etal05}. Recall that $\{X_i: i\in\N\}$ are i.i.d.\ copies of a random variable $X$ satisfying~\eqref{eq:def_varphi}, and that $S_n=X_1+\ldots+X_n$. For $n\in\N$ define a positive-valued random variable $W_n$ by
\begin{equation}\label{eq:def_bn}
W_n= e^{S_n-b_n}, \text{ where }
b_n =\frac{\log N_n+\varphi(2)n}{2}.
\end{equation}
Let $W_{n,1},\ldots, W_{n,N_n}$ be independent copies of the random variable $W_n$. Then $Z_n$ has the same distribution as $e^{b_n}\sum_{i=1}^{N_n}W_{n,i}$, and we can restate Theorem~\ref{theo:normal} in the following equivalent form:
\begin{equation}\label{eq:theo_normal_eq}
\sum_{i=1}^{N_n}W_{n,i}- N_n\E[W_n]\todistr\NN(0,1), \;\;\; n\to\infty.
\end{equation}
To see that~\eqref{eq:theo_normal_eq} is indeed equivalent to Theorem~\ref{theo:normal}, we need to show that $\Var Z_n \sim e^{2 b_n}$  as $n\to\infty$.
This can be done as follows:
$$
\Var Z_n
%&=N_n \Var [e^{S_n}]\\
=N_n(\E[e^{2S_n}]-\E[e^{S_n}]^2)
=N_n(e^{\varphi(2)n}-e^{2\varphi(1)n})
\sim N_n e^{\varphi(2)n}
= e^{2b_n},
$$
where we have used that by strict convexity of $\varphi$, $\varphi(2)>2\varphi(1)$.

From now on, we concentrate on proving~\eqref{eq:theo_normal_eq}.  According to Theorem~4.3 on p.~119 of~\cite{petrov_book95}, the convergence in~\eqref{eq:theo_normal_eq} will be established after we have verified that the following three conditions hold for every $\tau>0$:
\begin{align}
&\lim_{n\to\infty} N_n\P[W_n>\tau]=0, \label{eq:clt_cond1}\\
&\lim_{n\to\infty} N_n \Var[W_n 1_{W_n\leq \tau}]=1, \label{eq:clt_cond2}\\
%\E[W_n^2 1_{W_n\leq \tau}]- \left(\E[W_n 1_{W_n\leq \tau}]\right)^2
&\lim_{n\to\infty} N_n \E[W_n 1_{W_n> \tau}]=0. \label{eq:clt_cond3}
\end{align}

\subsection{Proof of~\eqref{eq:clt_cond1}}
Define $\beta_n=(b_n+\log \tau)/n$. Recalling~\eqref{eq:def_bn} and using Lemma~\ref{lem:ld_est0}, we obtain
\begin{equation}\label{eq:lim_beta_n}
N_n\P[W_n>\tau]=N_n\P[e^{S_n-b_n}>\tau]=N_n\P[S_n>n\beta_n]\leq N_n e^{-I(\beta_n)n}.
\end{equation}
Recall that by~\eqref{eq:asympt_N}, $N_n\sim e^{cn}$ as $n\to\infty$. To prove~\eqref{eq:clt_cond1}, it suffices to show that for some $\eps>0$,
$
I(\beta_n)>c+\eps
$
provided that $n$ is large.
We have
$$
\lim_{n\to\infty}\beta_n%=\lim_{n\to\infty}(b_n+t)/n
=\frac 12 (c+\varphi(2))=\frac 12 (c_2+\varphi(2))+\frac 12 (c-c_2)=\varphi'(2)+\frac 12 (c-c_2),
$$
where the last equality follows from~\eqref{eq:def_c1_c2}.

Note that $I(\varphi'(2))=c_2$ by~\eqref{eq:def_c1_c2}, and $I'(\varphi'(2))=2$ by Lemma~\ref{lem:I_prime}. For every $a>0$ we have
\begin{equation}\label{eq:ineq_348}
I(\varphi'(2)+a) > c_2+2a.
\end{equation}
Indeed, if $\varphi'(2)+a>\beta_\infty$, then the left-hand side of~\eqref{eq:ineq_348} is infinite, whereas the right-hand side is finite, so that~\eqref{eq:ineq_348} holds. If $\varphi'(2)+a\leq \beta_\infty$, then~\eqref{eq:ineq_348} follows from the strict convexity of $I$ on $(\beta_0,\beta_\infty]$.  By lower semi-continuity of $I$, and by~\eqref{eq:ineq_348}, we have
\begin{align*}
\liminf_{n\to\infty} I(\beta_n)
\geq I\left(\varphi'(2)+\frac 12 (c-c_2)\right)>c_2+(c-c_2)=c.
%&> I(\varphi'(2))+ \frac 12 (c-c_2) I'(\varphi'(2))\\
%&=I(\varphi'(2))+(c-c_2)\\
%&=c.
\end{align*}
Thus, there is $\eps>0$ such that $I(\beta_n)>c+\eps$ provided that $n$ is sufficiently large.
Therefore, the right-hand side of~\eqref{eq:lim_beta_n} converges to $0$ as $n\to\infty$. This proves~\eqref{eq:clt_cond1}.

\subsection{Proof of~\eqref{eq:clt_cond2}}
First note that by~\eqref{eq:def_bn},
\begin{equation}\label{eq:ewn2}
\E[W_n^2]=\E[e^{2(S_n-b_n)}]=e^{\varphi(2)n-2b_n}=1/N_n.
\end{equation}
In view of this, in order to prove~\eqref{eq:clt_cond2}, it suffices to establish the following two equalities:
\begin{align}
\lim_{n\to\infty} N_n \E[W_n^2 1_{W_n > \tau}]&=0, \label{eq:clt_cond2a}\\
\lim_{n\to\infty} N_n^{1/2} \E[W_n 1_{W_n\leq \tau}]&=0. \label{eq:clt_cond2b}
\end{align}
Let us prove~\eqref{eq:clt_cond2b} first. We have
$$
N_n^{1/2} \E[W_n 1_{W_n\leq \tau}]\leq N_n^{1/2}\E[W_n]=N_n^{1/2}e^{-b_n}\E[e^{S_n}]=e^{(\varphi(1)-\varphi(2)/2) n}.
$$
By strict convexity of $\varphi$, we have $\varphi(1)<\varphi(2)/2$. This proves~\eqref{eq:clt_cond2b}.
Let us prove~\eqref{eq:clt_cond2a}. By Lemma~4.2 of~\cite{ben_arous_etal05}, we have for every $\delta>0$,
$$
\E[W_n^2 1_{W_n>\tau}]< \tau^{-2\delta} \E[W_n^{2+2\delta}].
$$
Thus, it suffices to show that for some $\delta>0$,
\begin{equation}\label{eq:lim_n_E_Wn_delta}
\lim_{n\to\infty}N_n\E[W_n^{2+2\delta}]=0.
\end{equation}
We have
\begin{equation}\label{eq:est1}
\E[W_n^{2+2\delta}]
= e^{-(2+2\delta)b_n} \E[e^{(2+2\delta)S_n}]
= e^{-c_n(\delta) n},
\end{equation}
where $c_n(\delta)=(2+2\delta)(b_n/n)-\varphi(2+2\delta)$.
Recalling~\eqref{eq:def_bn}, we obtain
$$
\lim_{n\to\infty} c_n(\delta)=(1+\delta)(c+\varphi(2))-\varphi(2+2\delta).
$$
Denote the right-hand side by $c(\delta)$. As $\delta\to 0$, we have $\varphi(2+2\delta)=\varphi(2)+2\delta \varphi'(2)+o(\delta)$. Hence, as $\delta\to 0$,
\begin{align*}
c(\delta)
=c+\delta (c+\varphi(2)-2\varphi'(2))+o(\delta)
=c+\delta(c-c_2)+o(\delta).
\end{align*}
Since $c>c_2$ by the assumption of Theorem~\ref{theo:normal}, we can choose $\delta>0$ such that $c(\delta)>c$. It follows that there is $\eps>0$ such that  $c_n(\delta)>c+\eps$ provided that $n$ is sufficiently large. Recalling~\eqref{eq:est1}, we obtain for large $n$,
$$
N_n\E[W_n^{2+2\delta}] < N_n e^{-(c+\eps)n}.
$$
By~\eqref{eq:asympt_N}, the right-hand side tends to $0$ as $n\to\infty$. This proves~\eqref{eq:lim_n_E_Wn_delta} and~\eqref{eq:clt_cond2a}.

\subsection{Proof of~\eqref{eq:clt_cond3}}
Again using Lemma~4.2 of~\cite{ben_arous_etal05}, we obtain that for every $\delta>0$,
$$
N_n \E[W_n 1_{W_n>\tau}]\leq N_n \tau^{-1-2\delta} \E[W_n^{2+2\delta}].
$$
By~\eqref{eq:lim_n_E_Wn_delta}, the right-hand side converges to $0$ as $n\to\infty$.

\section{Proof of Theorem~\ref{theo:crit}}\label{sec:proof_crit}
The proof follows the same idea as the proof of Theorem~\ref{theo:normal}, but requires more subtle estimates. Let $W_n$ and $b_n$ be as in~\eqref{eq:def_bn}.
By Theorem~4.3 on p.~119 of~\cite{petrov_book95}, it suffices to check that for every $\tau>0$, the following three conditions hold:
\begin{align}
&\lim_{n\to\infty} N_n\P[W_n>\tau]=0, \label{eq:crit_cond1}\\
&\lim_{n\to\infty} N_n \Var[W_n 1_{W_n\leq \tau}]=1/2, \label{eq:crit_cond2}\\
%\left(\E[W_n^2 1_{W_n\leq \tau}]- \left(\E[W_n 1_{W_n\leq \tau}]\right)^2 \right)
&\lim_{n\to\infty} N_n \E[W_n 1_{W_n> \tau}]=0. \label{eq:crit_cond3}
\end{align}

\subsection{Proof of~\eqref{eq:crit_cond1}}
Set $\beta_n=(b_n+\log \tau)/n$ and $\beta=\varphi'(2)$. Note that by~\eqref{eq:def_c1_c2}, $I(\beta)=2\beta-\varphi(2)$ and, by Lemma~\ref{lem:I_prime}, $I'(\beta)=2$. By convexity of $I$ and~\eqref{eq:def_bn}, we have
\begin{equation}\label{eq:I_beta_n_est1}
I(\beta_n)\geq I(\beta)+I'(\beta)(\beta_n-\beta)=\frac 1n (\log N_n+2\log \tau).
\end{equation}
Note that by~\eqref{eq:def_bn} and~\eqref{eq:def_c1_c2},
\begin{equation}\label{eq:lim_beta_n_eq_beta}
\lim_{n\to\infty}\beta_n= \frac 12 (c_2+\varphi(2))=\varphi'(2)=\beta.
\end{equation}
Note also that $\beta=\varphi'(2)\in (\beta_0,\beta_{\infty})$. Hence, Lemma~\ref{cor:ld} yields
\begin{align*}
\P[W_n>\tau]
&=\P[S_n>n \beta_n]\\
&\leq C n^{-1/2} e^{-I(\beta_n) n } \\
&\leq C n^{-1/2} e^{-\log N_n-2\log \tau}\\
&=o(N_n^{-1}), \;\;\; n\to\infty.
\end{align*}

\subsection{Proof of~\eqref{eq:crit_cond2}}\label{sec:proof_crit2}
Exactly as in the proof of Theorem~\ref{theo:normal}, see~\eqref{eq:clt_cond2b}, it can be shown that for every $\tau>0$,
$$
\lim_{n\to\infty} N_n^{1/2} \E[W_n 1_{W_n\leq \tau}]=0.
$$
In view of $\E[W_n^2]=1/N_n$, see~\eqref{eq:ewn2}, in order to prove~\eqref{eq:crit_cond2}, it suffices to show that for every $\tau>0$,
\begin{equation}\label{eq:crit_cond2b}
\lim_{n\to\infty} N_n \E[W_n^2 1_{W_n\leq \tau}]=1/2.
\end{equation}
Recalling that $b_n$ is given by~\eqref{eq:def_bn}, we write
\begin{align*}
\lim_{n\to\infty} N_n \E[W_n^2 1_{W_n\leq \tau}]
&=\lim_{n\to\infty} e^{\log N_n-2b_n}\E[e^{2S_n}1_{S_n\leq b_n+\log \tau}]\\
&=\lim_{n\to\infty} e^{-\varphi(2)n} \E[e^{2S_n}1_{S_n\leq b_n+\log \tau}].
\end{align*}
Applying Part~\ref{p:1_lem3} of Lemma~\ref{lem:trunc_clt} below to the right-hand side yields~\eqref{eq:crit_cond2b}. The fact that the sequence $b_n+\log\tau$ satisfies the assumption of Part~\ref{p:1_lem3} of Lemma~\ref{lem:trunc_clt} can be seen as follows:
By~\eqref{eq:asympt_N}, we have, as $n\to\infty$,
$$
b_n=\frac{\log N_n+\varphi(2)n}{2}=\frac{c_2n+\varphi(2)n}{2}+o(1)=\varphi'(2)n+o(1).
$$
Hence, $b_n+\log\tau=\varphi'(2)n+O(1)$ as $n\to\infty$.

\subsection{Proof of~\eqref{eq:crit_cond3}}
Denote by $F_n$  the distribution function of the random variable $S_n-b_n$, and let $\bar F_n(y)=1-F_n(y)$ be the tail of $F_n$. To shorten the notation, we write $t=\log \tau$. We have
$$
\E[W_n 1_{W_n>\tau}]
=\E[e^{S_n-b_n}1_{S_n-b_n>t}]
=\int_{t}^{\infty}e^{y}dF_n(y).
$$
We take a sufficiently small but fixed $\eps>0$ and write
\begin{equation}\label{eq:dec_j1_j2}
N_n\E[W_n 1_{W_n>\tau}]=J_n^{(1)}(t,\eps)+J_n^{(2)}(\eps),
\end{equation}
where
$$
J_n^{(1)}(t,\eps)=N_n\int_{t}^{\eps n} e^{y}dF_n(y),\;\;\; J_n^{(2)}(\eps)=N_n\int_{\eps n}^{\infty} e^{y}dF_n(y).
$$
First we prove that for fixed $t\in\R$ and $\eps>0$,
\begin{equation}\label{eq:lim_j1}
\lim_{n\to\infty} J_n^{(1)}(t,\eps)=0.
\end{equation}
Using that $d\bar F_n(y)=-dF_n(y)$ and integrating by parts, we obtain
\begin{equation}\label{eq:int_parts1}
J_n^{(1)}(t,\eps)
=-N_n\int_{t}^{\eps n}e^{y}d\bar F_n(y)
=-N_n \bar F_n(y)e^y\Bigr|_{t}^{\eps n}+ \int_{t}^{\eps n} N_n\bar F_n(y)e^{y}dy.
\end{equation}
Take any $y\in[t,\eps n]$ and let $\beta_n=(b_n+y)/n$. As in~\eqref{eq:lim_beta_n_eq_beta}, $\lim_{n\to\infty}\beta_n=\beta$. It follows that for sufficiently large $n$, $\beta_n\in [\beta-2\eps,\beta+2\eps]$. If $\eps$ is sufficiently small, then Lemma~\ref{cor:ld} implies that for some constant $C>0$,
\begin{align*}
N_n\bar F_n(y)
=N_n\P[S_n>n \beta_n]
\leq C N_n  n^{-1/2} e^{-I(\beta_n) n }.
\end{align*}
As in~\eqref{eq:I_beta_n_est1},
\begin{equation}\label{eq:I_beta_n_est2}
I(\beta_n)\geq  \frac 1n (\log N_n+2y).
\end{equation}
It follows that for all $y\in [t,\eps n]$,
$$
N_n\bar F_n(y) \leq  C n^{-1/2} e^{-2 y}.
$$
Inserting this into the right-hand side of~\eqref{eq:int_parts1} and letting $n\to\infty$ yields~\eqref{eq:lim_j1}.

To complete the proof of~\eqref{eq:crit_cond3}, it suffices to show that for every $\eps>0$,
\begin{equation}\label{eq:lim_j2}
\lim_{n\to\infty} J_n^{(2)}(\eps)=0.
\end{equation}
Using first Lemma~\ref{lem:ld_est0} and then~\eqref{eq:I_beta_n_est2}, we obtain  the following estimate:
\begin{equation}\label{eq:tail_est1}
N_n \bar F_n(y)\leq N_n e^{-nI(\beta_n)}\leq e^{-2y}.
\end{equation}
Let $K_n$ be the set $[\eps n,\infty)\cap \Z$. We have
\begin{align*}
J_n^{(2)}(\eps)
&=N_n\int_{\eps n}^{\infty} e^{y}dF_n(y)\\
&\leq  N_n \sum_{j\in K_n} e^{j}  (F_n(j)-F_n(j-1))\\
&\leq N_n \sum_{j\in K_n} e^{j} \bar F_n(j-1).
\end{align*}
By~\eqref{eq:tail_est1}, $N_n\bar F_n(j-1)\leq e^2e^{-2j}$. Hence,
%the right-hand side can be estimated from above by
$$
J_n^{(2)}(\eps)\leq e^2 \sum_{j\in K_n} e^{-j}.
$$
By letting $n\to\infty$, we obtain~\eqref{eq:lim_j2}.
The validity of~\eqref{eq:crit_cond3} follows from the decomposition~\eqref{eq:dec_j1_j2} together with~\eqref{eq:lim_j1} and~\eqref{eq:lim_j2}.

\subsection{Computations with truncated exponential moments--1}\label{sec:trunc_clt}
In this section we prove a lemma on the asymptotic behavior of truncated exponential moments of sums of i.i.d.\ random variables. Part~\ref{p:1_lem3} of the lemma was used in Section~\ref{sec:proof_crit2}, whereas Part~\ref{p:2_lem3} will be needed only in the proof of Remark~\ref{rem:trunc}. Recall that $\{X_i,i\in\N\}$ are i.i.d.\ copies of a random variable $X$ satisfying the moment condition~\eqref{eq:def_varphi}, and that $S_n=X_1+\ldots+X_n$ are their partial sums.
\begin{lemma}\label{lem:trunc_clt}
Let $\alpha>0$ and set $\beta=\varphi'(\alpha)$. Let $b_n$ be some sequence, and set
$$
M_{\alpha}(n)=e^{-\varphi(\alpha) n} \E[e^{\alpha S_n} 1_{S_n\leq b_n}].
$$
\begin{enumerate}
\item \label{p:1_lem3} Suppose that $b_n=\beta n+r_n$, where $r_n=o(\sqrt{n})$ as $n\to\infty$. Then
$$
\lim_{n\to\infty} M_{\alpha}(n)=\frac 12.
$$
\item \label{p:2_lem3} Suppose that $b_n=\beta n+r_n$, where $r_n=o(n^{1/4})$ as $n\to\infty$, and assume that the distribution of $X$ is non-lattice. Then, as $n\to\infty$,
$$
M_{\alpha}(n) = \frac 12 +\frac{1}{\sqrt{2\pi \varphi''(\alpha) n} }\left( r_n+\frac {\varphi'''(\alpha)}{6\varphi''(\alpha)} \right) +o\left(\frac 1{\sqrt n}\right).
$$
%where $\mu_2=e^{-\varphi(\alpha)}\E[X^2 e^{\alpha X}]$ and $\mu_3=e^{-\varphi(\alpha)}\E[X^3 e^{\alpha X}]$.
\end{enumerate}
\end{lemma}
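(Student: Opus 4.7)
The plan is to reduce both parts to a central limit statement for a tilted sum, via the Cram\'er exponential change of measure. For Part~1 the ordinary CLT suffices, while for Part~2 the Edgeworth expansion of Theorem~\ref{theo:asympt_clt} is the right tool.

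\textbf{Exponential tilting.} I would introduce the tilted distribution $\tilde P$ under which $\tilde X$ has density $e^{\alpha x-\varphi(\alpha)}$ with respect to the law of $X$. A direct computation gives the moment generating function $\tilde E[e^{t\tilde X}]=e^{\varphi(\alpha+t)-\varphi(\alpha)}$, so the cumulant generating function of $\tilde X$ is $t\mapsto\varphi(\alpha+t)-\varphi(\alpha)$; in particular $\tilde X$ has mean $\varphi'(\alpha)=\beta$, variance $\varphi''(\alpha)$ and third central moment $\varphi'''(\alpha)$. If $\tilde X_1,\ldots,\tilde X_n$ are i.i.d.\ copies of $\tilde X$ with $\tilde S_n=\tilde X_1+\cdots+\tilde X_n$, then for any bounded measurable $f$,
$$
E\bigl[e^{\alpha S_n}f(S_n)\bigr]=e^{n\varphi(\alpha)}\,\tilde E\bigl[f(\tilde S_n)\bigr].
$$
Applied with $f=\mathbf{1}_{(-\infty,b_n]}$ this yields $M_\alpha(n)=\tilde P[\tilde S_n\leq b_n]$. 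Setting $Y_i=\tilde X_i-\beta$, which are i.i.d.\ zero-mean with variance $\varphi''(\alpha)$ and third moment $\varphi'''(\alpha)$, we can rewrite
$$
M_\alpha(n)=\tilde P\!\left[\frac{Y_1+\cdots+Y_n}{\sqrt{\varphi''(\alpha)\,n}}\leq x_n\right],\qquad x_n:=\frac{r_n}{\sqrt{\varphi''(\alpha)\,n}}.
$$

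\textbf{Part 1.} Since $r_n=o(\sqrt n)$, we have $x_n\to 0$, and the classical CLT applied to the $Y_i$ gives $M_\alpha(n)\to\Phi(0)=\tfrac12$.

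\textbf{Part 2.} Because the tilting density is strictly positive, $\tilde X$ has the same support as $X$, and hence is non-lattice; the shifted variable $Y_1$ is therefore also non-lattice. Apply Theorem~\ref{theo:asympt_clt} to $\{Y_i\}$ with $\mu_2=\varphi''(\alpha)$ and $\mu_3=\varphi'''(\alpha)$:
$$
\tilde P\!\left[\frac{Y_1+\cdots+Y_n}{\sqrt{\varphi''(\alpha)\,n}}\leq x\right]=\Phi(x)+\frac{\varphi'''(\alpha)(1-x^2)e^{-x^2/2}}{6\sqrt{2\pi n}\,\varphi''(\alpha)^{3/2}}+o\!\left(\frac1{\sqrt n}\right),
$$
uniformly in $x\in\R$. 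Now substitute $x=x_n$. The hypothesis $r_n=o(n^{1/4})$ forces $x_n=o(n^{-1/4})$, so $x_n^2=o(1/\sqrt n)$, whence the second-order Taylor expansion $\Phi(x_n)=\tfrac12+x_n/\sqrt{2\pi}+O(x_n^2)$ gives $\Phi(x_n)=\tfrac12+\tfrac{r_n}{\sqrt{2\pi\varphi''(\alpha)n}}+o(1/\sqrt n)$, and $(1-x_n^2)e^{-x_n^2/2}=1+o(1)$ reduces the Edgeworth correction to $\varphi'''(\alpha)/\bigl(6\sqrt{2\pi n}\,\varphi''(\alpha)^{3/2}\bigr)+o(1/\sqrt n)$. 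Collecting terms and factoring out $1/\sqrt{2\pi\varphi''(\alpha)n}$ produces the claimed expansion.

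\textbf{Main obstacle.} The conceptual content is essentially one line once the tilting is set up; the only place where care is required is in Part~2, where the $o(1/\sqrt n)$ error must swallow both the Taylor remainder of $\Phi$ at $x_n$ and the deviation of the Edgeworth factor $(1-x_n^2)e^{-x_n^2/2}$ from $1$. This is precisely the reason for the stronger assumption $r_n=o(n^{1/4})$ instead of the $o(\sqrt n)$ used in Part~1.
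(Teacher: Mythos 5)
Your proposal is correct and follows essentially the same route as the paper: the Cram\'er tilting identity $M_\alpha(n)=\P[\tilde S_n\le b_n]$ with $\tilde\varphi(t)=\varphi(t+\alpha)-\varphi(\alpha)$, the ordinary CLT for Part~1, and Theorem~\ref{theo:asympt_clt} plus the expansion $\Phi(x_n)=\tfrac12+x_n/\sqrt{2\pi}+O(x_n^2)$ for Part~2, with the $r_n=o(n^{1/4})$ hypothesis used exactly where the paper uses it. Your explicit remark that the tilted law inherits the non-lattice property (needed to invoke Theorem~\ref{theo:asympt_clt}) is a point the paper leaves implicit, and is a welcome addition.
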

\begin{proof}
The idea of the proof will be to use an exponential change of measure argument. This will allow us to reduce Part~\ref{p:1_lem3} of the lemma to a statement following from the central limit theorem. Part~\ref{p:2_lem3} will require the more subtle Theorem~\ref{theo:asympt_clt}.

Denote by $F$ the distribution function of $X$. Let $\{\tilde X_i, i\in\N\}$ be independent copies of a random variable $\tilde X$ whose distribution function $\tilde F$ is given by
\begin{equation}\label{eq:tilde_F_F}
\frac{d\tilde F(t)}{dF(t)}=e^{\alpha t-\varphi(\alpha)}.
\end{equation}
Denote by $F_n$ and $\tilde F_n$ the distribution functions of $S_n$ and $\tilde S_n:=\tilde X_1+\ldots+\tilde X_n$, respectively. Then it is well known (see e.g.\ the lemma on p.~74 of~\cite{durrett_book91}) that
$$
\frac{d\tilde F_n(t)}{dF_n(t)}=e^{\alpha t-\varphi(\alpha)n}.
$$
It follows that
\begin{equation}\label{eq:exp_change}
M_{\alpha}(n)
=\int_{-\infty}^{b_n}e^{\alpha t-\varphi(\alpha)n}dF_n(t)
=\int_{-\infty}^{b_n} d\tilde F_n(t) %\notag
=\P[\tilde S_n\leq b_n]. %\notag
\end{equation}

This reduces the study of the truncated exponential moment $M_{\alpha}(n)$ to the study of the probability $\P[\tilde S_n\leq b_n]$. Asymptotic properties of this  probability will be derived by means of the central limit theorem applied to the sum $\tilde S_n$.

First of all let us compute $\E \tilde X$ and $\Var \tilde X$. Recall that $\varphi(t)=\log \E[e^{tX}]$ and set $\tilde \varphi(t)=\log \E[e^{t\tilde X}]$.  By~\eqref{eq:tilde_F_F}, we have
$$
\tilde\varphi(t)=\log \int_{\R} e^{tx}d\tilde F(x)=\log \int_{\R} e^{tx}e^{\alpha x-\varphi(\alpha)}d F(x)=\varphi(t+\alpha)-\varphi(\alpha).
$$
Hence,
\begin{equation}\label{eq:E_tilde X}
\E \tilde X=\tilde\varphi'(0)=\varphi'(\alpha)=\beta,\;\;\;
\Var \tilde X = \tilde \varphi''(0)= \varphi''(\alpha).
\end{equation}
It follows that  $\E \tilde S_n =\beta n$ and $\Var \tilde S_n=\varphi''(\alpha) n$.
We may write
\begin{equation}\label{eq:S_n_Phi_n}
\P[\tilde S_n\leq b_n]=\P\left[\frac{\tilde S_n-\beta n}{\sqrt{\varphi''(\alpha) n}}\leq \frac{b_n-\beta n}{\sqrt{\varphi''(\alpha) n}}\right]=
\Phi_n(x_n),
%\Phi_n\left(\frac {r_n}{\sqrt{\varphi''(\alpha)n}}\right),
\end{equation}
where we have used the notation
\begin{equation}\label{eq:def_Phi_n}
\Phi_n(x)=\P\left[\frac{\tilde S_n-\beta n}{\sqrt{\varphi''(\alpha) n}}\leq x\right],\;\;\; x_n=\frac {r_n}{\sqrt{\varphi''(\alpha)n}}.
\end{equation}

Now we are ready to prove Part~\ref{p:1_lem3} of the lemma. If $r_n=o(\sqrt{n})$, then $\lim_{n\to\infty}x_n=0$, and the central limit theorem applied to $\tilde S_n$ shows that $\Phi_n(x_n)$ converges to $1/2$ as $n\to\infty$. Hence,
$$
\lim_{n\to\infty}M_{\alpha}(n)=\lim_{n\to\infty}\P[\tilde S_n\leq b_n]=\lim_{n\to\infty}\Phi_n(x_n)=1/2.
$$
%This proves part~\ref{p:1_lem3} of the lemma.

To prove Part~\ref{p:2_lem3} of the lemma, we need to obtain an asymptotic formula for the probability  $\P[\tilde S_n\leq b_n]$ with an error term of the form $o(1/\sqrt n)$. To this end, we apply Theorem~\ref{theo:asympt_clt} to the centered variables $\{\tilde X_i-\beta, i\in\N\}$.
Recalling~\eqref{eq:E_tilde X} and using the well-known equality of the third cumulant and the third centered moment (see e.g. Eq.~26 on p.~66 of~\cite{gnedenko_book54}), we obtain
\begin{equation}\label{eq:mu2_mu3}
\mu_2:=\E[(\tilde X-\beta)^2]=\varphi''(\alpha),\;\;\;
\mu_3:=\E[(\tilde X-\beta)^3]=\tilde \varphi'''(0)= \varphi'''(\alpha).
\end{equation}
Applying Theorem~\ref{theo:asympt_clt},
%to the right-hand side of~\eqref{eq:def_Phi_n} and using~\eqref{eq:Phi_asympt},
we obtain
$$
%\Phi_n\left(\frac {r_n}{\sqrt{\varphi''(\alpha)n}}\right)
\Phi_n (x_n)
=
\Phi\left(\frac {r_n}{\sqrt{\varphi''(\alpha)n}} \right)+ \frac{\mu_3 }{6 \sqrt{2\pi n}\mu_2^{3/2}}+o\left(\frac 1{\sqrt n}\right), \;\;\; n\to\infty.
$$
The standard normal distribution function $\Phi(x)=(2\pi)^{-1/2}\int_{-\infty}^x e^{-t^2/2}dt$ satisfies
\begin{equation}\label{eq:Phi_asympt}
\Phi(x)=\frac 12 + \frac{x}{\sqrt{2\pi}}+O(x^2), \;\;\; x\to 0.
\end{equation}
Since $r_n=o(n^{1/4})$ by the assumption of Part~\ref{p:2_lem3}, we have $x_n^2=o(1/\sqrt n)$ as $n\to\infty$. Hence,
$$
%\Phi_n\left(\frac {r_n}{\sqrt{\varphi''(\alpha)n}}\right)
\Phi_n(x_n)
=
\frac 12 +  \frac {r_n}{\sqrt{2\pi \varphi''(\alpha)n}} + \frac{\mu_3 }{6 \sqrt{2\pi n}\mu_2^{3/2}}+o\left(\frac 1{\sqrt n}\right), \;\;\; n\to\infty.
$$
To complete the proof of Part~\ref{p:2_lem3} of the lemma, recall that $\mu_2$ and $\mu_3$ are given by~\eqref{eq:mu2_mu3}.
\end{proof}

\begin{proof}[Proof of Remark~\ref{rem:trunc}]
By~\eqref{eq:def_An}, we have $A_n=N_n\E[e^{S_n}1_{S_n\leq b_n}]$, where $b_n$ is given by~\eqref{eq:def_Bn_large}. The statement of the remark follows by applying Part~\ref{p:2_lem3} of Lemma~\ref{lem:trunc_clt} with $\alpha=1$.
\end{proof}

\section{Proof of Theorem~\ref{theo:stable}}\label{sec:proof_stable}
By the statement of Theorem~\ref{theo:stable}, we assume that $c\in (0,c_2)$. Recall that $\alpha\in (0,2)$ is defined as  the solution of the equation $I(\varphi'(\alpha))=c$, that $\beta=\varphi'(\alpha)$, and that
\begin{equation}\label{eq:def_bn_stable}
b_n=\log B_n = \beta n -\alpha^{-1}\log \left( \alpha \sqrt{2\pi\varphi''(\alpha) n} \right).
\end{equation}
Define a positive-valued random variable $W_n$ by
\begin{equation}\label{eq:def_wn_1}
W_n=B_n^{-1}e^{S_n}=e^{S_n-b_n}.
\end{equation}
Let $W_{n,1},\ldots, W_{n,N_n}$ be independent copies of $W_n$. With this notation, Theorem~\ref{theo:stable} is equivalent to the following statement:
\begin{equation}\label{eq:stable_cond}
\sum_{i=1}^{N_n} W_{n,i}- B_n^{-1}A_n\todistr \FF_{\alpha},\;\;\; n\to\infty.
\end{equation}
By~\cite[Section~6]{ben_arous_etal05}, the convergence in~\eqref{eq:stable_cond} will be established once we have verified the validity of the following three statements:
\begin{enumerate}
\item For every $\tau>0$,
\begin{equation}\label{eq:stable_cond1}
\lim_{n\to\infty} N_n \P[W_n> \tau]=\tau^{-\alpha}.
\end{equation}
\item We have
\begin{equation}\label{eq:stable_cond2}
\lim_{\tau\to 0+}\limsup_{n\to\infty} N_n \Var [W_n 1_{W_n\leq \tau}]=0.
\end{equation}
\item  For every $\tau>0$,
\begin{equation}\label{eq:stable_cond3}
D_{\alpha}(\tau)
:=\lim_{n\to\infty} (N_n\E[W_n 1_{W_n\leq \tau}]-B_n^{-1}A_n)
=
\begin{cases}
\frac{\alpha}{1-\alpha}\tau^{1-\alpha}, & \text{ if }\alpha\neq 1,\\
\log \tau,\;\;\; &\text{ if } \alpha=1.
\end{cases}
\end{equation}
\end{enumerate}

Our proof of the above conditions will be based on lemmas whose statements and proofs are postponed to  Section~\ref{sec:comp_trunc_ld} below.

\subsection{Proof of~\eqref{eq:stable_cond1}}
Since $W_n=e^{S_n-b_n}$ by~\eqref{eq:def_wn_1}, we have
$$
N_n\P[W_n>\tau]=N_n\P[S_n-b_n>\log \tau].
$$
By Lemma~\ref{lem:ld} below with $y=\log \tau$, the right-hand side converges to $\tau^{-\alpha}$ as $n\to\infty$.

\subsection{Proof of~\eqref{eq:stable_cond2}} \label{sec:stable_cond1}
Since $\Var [W_n 1_{W_n\leq \tau}]\leq \E[W_n^2 1_{W_n\leq \tau}]$, it suffices to prove that
\begin{equation}\label{eq:stable_cond2a}
\lim_{\tau \to 0+} \limsup_{n\to\infty} N_n\E[W_n^2 1_{W_n\leq \tau}]=0.
\end{equation}
Recall that by~\eqref{eq:def_wn_1}, $W_n=e^{S_n-b_n}$. We have
\begin{align*}
 N_n\E[W_n^2 1_{W_n\leq \tau}]
= N_n\E[e^{2(S_n-b_n)}1_{S_n\leq b_n+\log \tau}].
\end{align*}
recall that $\alpha\in(0,2)$. Applying to the right-hand side Part~\ref{p:1_lem6} of Lemma~\ref{lem:aux_int} below with $\kappa=2$ and $t=\log \tau$, we obtain
$$
\lim_{n\to\infty} N_n\E[W_n^2 1_{W_n\leq \tau}]=\frac{\alpha}{2-\alpha} \tau^{2-\alpha}.
$$
Then~\eqref{eq:stable_cond2a} follows by letting $\tau\to 0+$.

\subsection{Proof of~\eqref{eq:stable_cond3}}
Let us assume first that $\alpha\in (0,1)$ (which is equivalent to $c\in (0,c_1)$). Then by~\eqref{eq:def_An}, $A_n=0$, and
$$
D_{\alpha}(\tau)= \lim_{n\to\infty} N_n\E[e^{S_n-b_n} 1_{S_n\leq b_n+\log\tau}].
$$
Applying to the right-hand side Part~\ref{p:1_lem6} of Lemma~\ref{lem:aux_int} below with $\kappa=1$ and $t=\log\tau$, we get the statement of~\eqref{eq:stable_cond3} in the case $\alpha\in(0,1)$.

Now assume that $\alpha\in(1,2)$ (equivalently, $c\in (c_1,c_2)$). By~\eqref{eq:def_An}, $B_n^{-1} A_n=N_n \E [W_n]$ and hence,
\begin{align*}
D_{\alpha}(\tau)
&=\lim_{n\to\infty} (N_n\E[W_n 1_{W_n\leq \tau}]-N_n \E [W_n])\\
&=-\lim_{n\to\infty} N_n\E[W_n 1_{W_n> \tau}]\\
&=-\lim_{n\to\infty} N_n\E[e^{S_n-b_n} 1_{S_n> b_n+ \log \tau}].
\end{align*}
Applying to the right-hand side Part~\ref{p:2_lem6} of Lemma~\ref{lem:aux_int} below with $\kappa=1$ and $t=\log\tau$, we get the statement of~\eqref{eq:stable_cond3} in the case $\alpha\in (1,2)$.

Finally, let us consider the case $\alpha=1$. By~\eqref{eq:def_An}, $B_n^{-1}A_n=N_n \E[W_n 1_{W_n\leq 1}]$. For concreteness, assume that $\tau\geq 1$, the proof in the case $\tau<1$ being analogous. Then
\begin{align*}
D_{\alpha}(\tau)
&=\lim_{n\to\infty} (N_n\E[W_n 1_{W_n\leq \tau}]-N_n \E[W_n 1_{W_n\leq 1}])\\
&=\lim_{n\to\infty} N_n\E[W_n 1_{W_n\in (1,\tau]}]\\
&=\lim_{n\to\infty} N_n\E[e^{S_n-b_n} 1_{b_n< S_n \leq b_n+ \log \tau}].
\end{align*}
Applying to the right-hand side Part~\ref{p:3_lem6} of Lemma~\ref{lem:aux_int} below with $\kappa=1$, $t_1=0$,  $t_2=\log\tau$, we get the statement of~\eqref{eq:stable_cond3} in the case $\alpha=1$.
This completes the proof of~\eqref{eq:stable_cond3}.

\subsection{Computations with truncated exponential moments--2} \label{sec:comp_trunc_ld}
In this section we prove several lemmas on truncated exponential moments for sums of i.i.d.\ variables. These lemmas were used in the above proof of Theorem~\ref{theo:stable}. Let $c_{\infty}=\lim_{\alpha\to+\infty}I(\varphi'(\alpha))$.
%Let $\{X_i, i\in\N\}$ be i.i.d.\ copies of a random variable $X$ satisfying~\eqref{eq:def_varphi}random variables such that the moment condition~\eqref{eq:def_varphi_1} holds, and let $S_n=X_1+\ldots+X_n$ be their partial sums.
%In the next two lemmas we define the sequence $b_n$ by~\eqref{eq:def_Bn_large}, that is we set
%$$
%b_n=\beta n -\alpha^{-1}\log \{ \alpha \sqrt{2\pi\varphi''(\alpha) n} \}.
%$$
\begin{lemma}\label{lem:ld}
Suppose that~\eqref{eq:asympt_N} and~\eqref{eq:def_varphi} hold and that $c\in (0, c_{\infty})$. Let $\alpha$ be the solution of the equation $I(\varphi'(\alpha))=c$, set $\beta=\varphi'(\alpha)$, and let $b_n$ be defined by~\eqref{eq:def_bn_stable}. Assume also that the distribution of $X$ is non-lattice.
Then for every $y\in\R$ we have
\begin{equation}\label{eq:lem_aux_1}
\lim_{n\to\infty} N_n\P[S_n-b_n>y]=e^{-\alpha y}.
\end{equation}
As long as $y$ stays bounded, the convergence in~\eqref{eq:lem_aux_1} is uniform in $y$.
\end{lemma}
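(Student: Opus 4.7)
The plan is to reduce the statement to the precise Bahadur--Rao asymptotic (Theorem~\ref{theo:ld}, Part~\ref{p:1_th_petrov}) applied at the sliding threshold $\beta_n:=(b_n+y)/n$, and to extract the constant $e^{-\alpha y}$ from a first-order Taylor expansion of $I$ at $\beta$. The cancellation of the polynomial prefactor $\sqrt n$ against $N_n\sim e^{cn}$ is the whole point of the particular choice of $b_n$ in~\eqref{eq:def_bn_stable}.

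First I would write $\P[S_n-b_n>y]=\P[S_n>n\beta_n]$ with $\beta_n=(b_n+y)/n$. From~\eqref{eq:def_bn_stable} one has $\beta_n-\beta=n^{-1}(y-\alpha^{-1}\log(\alpha\sqrt{2\pi\varphi''(\alpha)n}))$, so $\beta_n\to\beta$ at rate $O((\log n)/n)$, uniformly for $y$ in any bounded set. In particular, for all sufficiently large $n$ and all such $y$, the point $\beta_n$ lies in a fixed compact neighborhood $K\subset(\beta_0,\beta_\infty)$ of $\beta$. Theorem~\ref{theo:ld}, Part~\ref{p:1_th_petrov}, applies uniformly on $K$, and the distinction between the events $\{S_n>n\beta_n\}$ and $\{S_n\ge n\beta_n\}$ is harmless (it changes the probability by an additive term of strictly smaller exponential-and-polynomial order), giving
\begin{equation*}
\P[S_n>n\beta_n]\sim\frac{1}{\alpha_n\sqrt{2\pi\varphi''(\alpha_n)n}}\,e^{-nI(\beta_n)},
\end{equation*}
where $\alpha_n:=(\varphi')^{-1}(\beta_n)\to\alpha$ and the $\sim$ is uniform in $y$ bounded.

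Next I would Taylor-expand $I$ about $\beta$. Using $I(\beta)=c$ (by the defining equation for $\alpha$) and $I'(\beta)=\alpha$ (Lemma~\ref{lem:I_prime}), together with $n(\beta_n-\beta)=y-\alpha^{-1}\log(\alpha\sqrt{2\pi\varphi''(\alpha)n})$, I obtain
\begin{equation*}
n I(\beta_n)=nc+\alpha y-\log\!\bigl(\alpha\sqrt{2\pi\varphi''(\alpha)n}\bigr)+O\!\bigl(n(\beta_n-\beta)^2\bigr),
\end{equation*}
with the remainder bounded by a constant times $(\log n)^2/n=o(1)$, uniformly in $y$ bounded (since $I\in C^2$ near $\beta$). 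Exponentiating yields
\begin{equation*}
e^{-nI(\beta_n)}=e^{-nc}\,\alpha\sqrt{2\pi\varphi''(\alpha)n}\,e^{-\alpha y}(1+o(1)).
\end{equation*}

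Multiplying by $N_n\sim e^{cn}$ and combining with the Bahadur--Rao asymptotic, the factors $e^{cn}$ and $e^{-cn}$ cancel, the prefactor $\alpha\sqrt{2\pi\varphi''(\alpha)n}$ cancels against $\alpha_n\sqrt{2\pi\varphi''(\alpha_n)n}$ (using continuity of $\varphi''$ and of $(\varphi')^{-1}$ at $\beta$, both of which hold on $K$), and we are left with $e^{-\alpha y}(1+o(1))$. All error terms remain $o(1)$ uniformly in $y$ on any bounded set, establishing both the pointwise limit and the claimed local uniformity.

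The main (minor) obstacle is keeping track of uniformity: we need the Bahadur--Rao estimate to hold uniformly as $\beta_n$ drifts, which is provided by the last sentence of Theorem~\ref{theo:ld}, and we need the Taylor remainder to be uniformly $o(1)$, which follows from the $C^2$ regularity of $I$ on $(\beta_0,\beta_\infty)$. Everything else is bookkeeping.
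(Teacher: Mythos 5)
Your argument is essentially the paper's own proof: apply the uniform Bahadur--Rao asymptotic of Theorem~\ref{theo:ld} at $\beta_n=(b_n+y)/n$, Taylor-expand $I$ at $\beta$ using $I(\beta)=c$ and $I'(\beta)=\alpha$ from Lemma~\ref{lem:I_prime}, observe that $n(\beta_n-\beta)^2=o(1)$, and let the choice of $b_n$ in~\eqref{eq:def_bn_stable} cancel the $\sqrt{n}$ prefactor against $N_n\sim e^{cn}$. Your extra care with $\alpha_n$ versus $\alpha$ and with strict versus non-strict inequalities only refines bookkeeping the paper handles implicitly, so the proposal is correct and follows the same route.
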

\begin{proof}
Let $\beta_n=(b_n+y)/n$ and note that $\lim_{n\to\infty}\beta_n=\beta$.
By Theorem~\ref{theo:ld} (see in particular, the uniformity statement in its formulation), we have
\begin{equation}\label{eq:P_Sn_bn}
\P[S_n-b_n > y]
\sim
\frac{1}{\alpha \sqrt{2\pi \varphi''(\alpha) n }} e^{-n I(\beta_n)}, \;\;\; n\to\infty.
\end{equation}
To complete the proof, we need an asymptotic formula for $nI(\beta_n)$ as $n\to\infty$ with an error term of order $o(1)$. Using Taylor's expansion of the function $I$ at the point $\beta$ we obtain
$$
I(\beta_n)=I(\beta)+I'(\beta)(\beta_n-\beta) + O((\beta_n-\beta)^2),\;\;\; n\to\infty.
$$
Recall that $I(\beta)=c$ and by Lemma~\ref{lem:I_prime}, $I'(\beta)=\alpha$.  Noting that by~\eqref{eq:def_bn_stable}, $\beta_n-\beta=o(1/\sqrt n)$ as $n\to\infty$, we obtain
\begin{align}\label{eq:i_beta_n_0}
nI(\beta_n)=cn - \log\left(\alpha \sqrt{2\pi\varphi''(\alpha)n}\right) + \alpha y +o(1), \;\;\; n\to\infty.
\end{align}
Applying~\eqref{eq:i_beta_n_0} to the right-hand side of~\eqref{eq:P_Sn_bn}, we obtain that
$$
\P[S_n-b_n > y]  \sim e^{-cn}e^{-\alpha y}, \;\;\; n\to\infty.
$$
To complete the proof of~\eqref{eq:lem_aux_1}, recall that by~\eqref{eq:asympt_N}, $N_n\sim e^{cn}$ as $n\to\infty$.  \end{proof}
The next lemma gives an estimate for the probability $\P[S_n-b_n>y]$ which is valid uniformly for $y\in [-\eps n, \eps n]$, where $\eps>0$ is a fixed small number.
\begin{lemma}\label{lem:ld_est1}
Suppose that~\eqref{eq:asympt_N} and~\eqref{eq:def_varphi} hold and that $c\in (0,c_{\infty})$.  Let $\alpha$ be the solution of the equation $I(\varphi'(\alpha))=c$, set $\beta=\varphi'(\alpha)$, and let $b_n$ be defined by~\eqref{eq:def_bn_stable}.
If $\eps>0$ is sufficiently small, then there is a constant $C>0$ such that for all $n\in\N$ and all $y\in [-\eps n, \eps n]$,
\begin{equation}\label{eq:ld1}
N_n \P[S_n-b_n>y]< C  e^{-\alpha y}.
\end{equation}
\end{lemma}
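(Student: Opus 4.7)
\textbf{Proof plan for Lemma~\ref{lem:ld_est1}.} The strategy is to mimic the proof of the pointwise asymptotic in Lemma~\ref{lem:ld}, but carry it out with uniform estimates rather than exact asymptotics, replacing Theorem~\ref{theo:ld} by the cruder Lemma~\ref{cor:ld} and replacing the Taylor expansion of $I$ by a one-sided convexity bound.

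Concretely, for $y\in[-\eps n,\eps n]$ set $\beta_n(y)=(b_n+y)/n$, so that $\P[S_n-b_n>y]=\P[S_n\ge n\beta_n(y)]$. Since $b_n/n\to\beta$ and $\beta=\varphi'(\alpha)\in(\beta_0,\beta_\infty)$, one may fix $\eps>0$ small enough that $\beta_n(y)$ lies in a compact subset $K\subset(\beta_0,\beta_\infty)$ for all $n$ and all $y\in[-\eps n,\eps n]$ (shrinking $\eps$ and enlarging $K$ as needed to handle finitely many small $n$). Lemma~\ref{cor:ld} then yields, with a constant depending only on $K$,
\begin{equation*}
\P[S_n\ge n\beta_n(y)]\le Cn^{-1/2}e^{-nI(\beta_n(y))}.
\end{equation*}

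Next I would use convexity of $I$ together with $I(\beta)=c$ and $I'(\beta)=\alpha$ (Lemma~\ref{lem:I_prime}) to obtain the one-sided bound
\begin{equation*}
I(\beta_n(y))\ge I(\beta)+I'(\beta)(\beta_n(y)-\beta)=c+\alpha(\beta_n(y)-\beta).
\end{equation*}
Substituting the definition~\eqref{eq:def_bn_stable} of $b_n$, one computes
\begin{equation*}
n(\beta_n(y)-\beta)=y-\alpha^{-1}\log\bigl(\alpha\sqrt{2\pi\varphi''(\alpha)n}\bigr),
\end{equation*}
so that
\begin{equation*}
nI(\beta_n(y))\ge cn+\alpha y-\log\bigl(\alpha\sqrt{2\pi\varphi''(\alpha)n}\bigr).
\end{equation*}
Feeding this into the previous estimate, the $n^{-1/2}$ factor cancels against the logarithm, giving $\P[S_n\ge n\beta_n(y)]\le C'e^{-cn}e^{-\alpha y}$ for some constant $C'$ uniform in $y$ and $n$. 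Multiplying by $N_n$ and using~\eqref{eq:asympt_N} (which implies $N_ne^{-cn}$ is bounded) yields the desired bound $N_n\P[S_n-b_n>y]\le Ce^{-\alpha y}$.

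The only real subtlety is ensuring that $\beta_n(y)$ stays in a fixed compact set $K\subset(\beta_0,\beta_\infty)$ uniformly for $y\in[-\eps n,\eps n]$ and all $n\in\N$; once $\eps$ is chosen small enough that $\beta\pm 2\eps\in(\beta_0,\beta_\infty)$, this holds for all sufficiently large $n$, and the finitely many remaining $n$ can be absorbed into the constant $C$. Everything else is a direct combination of Lemma~\ref{cor:ld}, convexity, Lemma~\ref{lem:I_prime}, and the explicit choice of $b_n$.
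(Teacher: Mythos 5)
Your proposal is correct and follows essentially the same route as the paper's proof: rewrite the probability as $\P[S_n\ge n\beta_n]$ with $\beta_n=(b_n+y)/n$, apply the uniform bound of Lemma~\ref{cor:ld} on a compact neighbourhood of $\beta$, use convexity of $I$ with $I(\beta)=c$, $I'(\beta)=\alpha$ to get $nI(\beta_n)\ge cn+\alpha y-\log\bigl(\alpha\sqrt{2\pi\varphi''(\alpha)n}\bigr)$, and note that the logarithmic term cancels the $n^{-1/2}$ factor. Your explicit remark about absorbing the finitely many small $n$ into the constant is a small point the paper glosses over, but otherwise the arguments coincide.
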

\begin{proof}
We have $\P[S_n-b_n>y]=\P[S_n>n\beta_n]$, where $\beta_n=(b_n+y)/n$. Since it is assumed that $y\in [-\eps n, \eps n]$ and since by~\eqref{eq:def_bn_stable}, $\lim_{n\to\infty} b_n/n=\beta$, we have $\beta_n\in (\beta-2\eps, \beta+2\eps)$ provided that $n$ is sufficiently large. By convexity of the function $I$,
\begin{equation}\label{eq:n_I_beta_n_ineq}
nI(\beta_n)
\geq n I(\beta)+ nI'(\beta)(\beta_n-\beta)
=cn- \log \left( \alpha \sqrt{2\pi\varphi''(\alpha) n} \right)+\alpha y.
\end{equation}
Using first Lemma~\ref{cor:ld} and then~\eqref{eq:n_I_beta_n_ineq}, we obtain
$$
\P[S_n>n\beta_n] \leq  \frac{C_1}{\sqrt n} e^{-nI (\beta_n)}\leq  \frac{C_1}{\sqrt n} e^{- cn }e^{\log ( \alpha \sqrt{2\pi\varphi''(\alpha) n})} e^{-\alpha y}\leq C_2 N_n^{-1} e^{-\alpha y}.
$$
This completes the proof of~\eqref{eq:ld1}.
\end{proof}

\begin{lemma}\label{lem:aux_int}
Let the assumptions of Lemma~\ref{lem:ld} be satisfied.  Then the following three statements hold true.
\begin{enumerate}
\item \label{p:1_lem6} Let $\kappa>\alpha$. Then for every $t\in\R$,
\begin{equation}\label{eq:lem_aux_2}
%\lim_{n\to\infty} \int_{-\infty}^{t}N_n\bar F_n(y)e^{\kappa y}dy=\frac{1}{\kappa-\alpha} e^{(\kappa-\alpha)t}.
\lim_{n\to\infty} N_n \E[e^{\kappa (S_n-b_n)}1_{S_n\leq b_n+t}]=\frac{\alpha}{\kappa-\alpha} e^{(\kappa-\alpha)t}.
\end{equation}
\item \label{p:2_lem6} Let $0<\kappa<\alpha$. Then for every $t\in\R$,
\begin{equation}\label{eq:lem_aux_3}
%\lim_{n\to\infty}\int_{-\infty}^{t}N_n\bar F_n(y)e^{\kappa y}dy=\frac{1}{\kappa-\alpha}e^{(\kappa-\alpha)t}.
\lim_{n\to\infty} N_n \E[e^{\kappa (S_n-b_n)}1_{S_n>b_n+t}]=-\frac{\alpha}{\kappa-\alpha} e^{(\kappa-\alpha)t}.
\end{equation}
\item \label{p:3_lem6} Let $\kappa=\alpha$. Then for every $t_1\leq t_2$,
\begin{equation}\label{eq:lem_aux_4}
\lim_{n\to\infty} N_n \E[e^{\kappa (S_n-b_n)}1_{b_n+t_1<S_n\leq b_n+t_2}]=\kappa (t_2-t_1).
\end{equation}
\end{enumerate}
\end{lemma}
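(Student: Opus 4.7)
The strategy is to convert each expectation into an integral of $e^{\kappa y}$ against the tail $G_n(y):=\P[S_n-b_n>y]$ via integration by parts, and then combine the pointwise convergence $N_nG_n(y)\to e^{-\alpha y}$ from Lemma~\ref{lem:ld} with a uniform bound of the form $N_nG_n(y)\leq Ce^{-\alpha y}$ (from Lemma~\ref{lem:ld_est1} on a bounded range of $y$, extended via Lemma~\ref{lem:ld_est0} and convexity of $I$ elsewhere) through dominated convergence.

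For Part~\ref{p:1_lem6}, integration by parts gives
\[
N_n\E\bigl[e^{\kappa(S_n-b_n)}1_{S_n\leq b_n+t}\bigr] = -N_n e^{\kappa t}G_n(t) + \kappa N_n\int_{-\infty}^{t}e^{\kappa y}G_n(y)\,dy,
\]
with the boundary contribution at $-\infty$ vanishing for each fixed $n$. Lemma~\ref{lem:ld} sends the first term to $-e^{(\kappa-\alpha)t}$. To control the integral, I would split at $y=-Mn$ for some $M\in(c/\kappa,\beta-\beta_0)$; this interval is nonempty because strict convexity of $I$ together with $I'(\beta_0^+)=0$ yields $c=\int_{\beta_0}^{\beta}I'(u)\,du<\alpha(\beta-\beta_0)<\kappa(\beta-\beta_0)$ (with $\beta-\beta_0=+\infty$ if $\beta_0=-\infty$). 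On $[-Mn,t]$ the argument used in the proof of Lemma~\ref{lem:ld_est1}, applied to the compact set $[\beta-M,\beta]\subset(\beta_0,\beta_\infty)$, gives $N_nG_n(y)\leq Ce^{-\alpha y}$, so the integrand is dominated by $C\kappa e^{(\kappa-\alpha)y}$, which is integrable on $(-\infty,t]$ since $\kappa>\alpha$, and dominated convergence delivers $\tfrac{\kappa}{\kappa-\alpha}e^{(\kappa-\alpha)t}$. On $(-\infty,-Mn)$ the crude bound $G_n\leq 1$ yields a contribution of order $N_ne^{-\kappa Mn}\sim e^{(c-\kappa M)n}\to 0$. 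Adding the two pieces gives $\tfrac{\alpha}{\kappa-\alpha}e^{(\kappa-\alpha)t}$.

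Part~\ref{p:2_lem6} is handled symmetrically: integration by parts now yields
\[
N_n\E\bigl[e^{\kappa(S_n-b_n)}1_{S_n>b_n+t}\bigr] = N_n e^{\kappa t}G_n(t) + \kappa N_n\int_{t}^{\infty}e^{\kappa y}G_n(y)\,dy,
\]
and the boundary at $+\infty$ vanishes because for fixed $n$ the moment generating function of $S_n$ is finite of every positive order, giving $e^{\kappa y}G_n(y)\to 0$ as $y\to\infty$. The first term tends to $e^{(\kappa-\alpha)t}$. For the integral, Lemma~\ref{lem:ld_est1} dominates the integrand by $Ce^{(\kappa-\alpha)y}$ on $[t,\epsilon n]$ for small $\epsilon$, which is integrable on $[t,\infty)$ since $\kappa<\alpha$; on $(\epsilon n,\infty)$, Lemma~\ref{lem:ld_est0} together with convexity of $I$ gives $N_nG_n(y)\leq Cn^{O(1)}e^{-\alpha y}$, producing a tail contribution of order $n^{O(1)}e^{(\kappa-\alpha)\epsilon n}\to 0$. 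The limit of the integral is $\tfrac{\kappa}{\alpha-\kappa}e^{(\kappa-\alpha)t}$, combining with the boundary to $-\tfrac{\alpha}{\kappa-\alpha}e^{(\kappa-\alpha)t}$.

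Part~\ref{p:3_lem6} is the easiest: on the bounded interval $[t_1,t_2]$, the convergence $N_nG_n(y)\to e^{-\alpha y}$ is uniform by Lemma~\ref{lem:ld}, and integration by parts produces
\[
N_ne^{\alpha t_1}G_n(t_1)-N_ne^{\alpha t_2}G_n(t_2)+\alpha N_n\int_{t_1}^{t_2}e^{\alpha y}G_n(y)\,dy,
\]
which converges to $e^{\alpha t_1}e^{-\alpha t_1}-e^{\alpha t_2}e^{-\alpha t_2}+\alpha\int_{t_1}^{t_2}1\,dy=\alpha(t_2-t_1)$. The main technical obstacle across all three parts is extending the uniform bound of Lemma~\ref{lem:ld_est1} to a window of $y$ wide enough to absorb the tails; in Part~\ref{p:1_lem6} the decisive algebraic input is the strict inequality $c<\kappa(\beta-\beta_0)$ noted above.
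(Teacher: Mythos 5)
Your argument is correct, and it uses the same basic ingredients as the paper (integration by parts against the tail $\bar F_n$, Lemma~\ref{lem:ld} for the main term, Lemma~\ref{cor:ld}/Lemma~\ref{lem:ld_est0} plus convexity of $I$ for the remainder), but the way you organize the error terms is genuinely different. The paper splits the integral $N_n\int e^{\kappa y}\,dF_n(y)$ into \emph{four} regions $[-T,t]$, $[-\eps n,-T]$, $[-\Delta n,-\eps n]$, $(-\infty,-\Delta n]$, handles the first by uniform convergence and an iterated limit $n\to\infty$, $T\to\infty$, treats the middle range by a discretized Riemann-sum bound over integer points, and uses the Markov bound with the inequality $\kappa\Delta>c$ on the far tail. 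You instead make a single split at $y=-Mn$ with $M\in(c/\kappa,\beta-\beta_0)$ and absorb everything on $[-Mn,t]$ into one dominated-convergence argument, which requires extending the bound of Lemma~\ref{lem:ld_est1} from the window $[-\eps n,\eps n]$ to the macroscopic window $[-Mn,t]$; this extension is legitimate because Lemma~\ref{cor:ld} is uniform over any compact subset of $(\beta_0,\beta_\infty)$ and the supporting-line bound $nI(\beta_n)\ge cn-\log(\alpha\sqrt{2\pi\varphi''(\alpha)n})+\alpha y$ is global (just leave a little slack, e.g.\ work with $[\beta-M',\beta+\delta]$ for some $M<M'<\beta-\beta_0$, since $\beta_n$ can dip slightly below $\beta-M$). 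Your key inequality $c<\kappa(\beta-\beta_0)$ is exactly the paper's $\kappa\Delta>c$, proved by a slightly different convexity argument ($I(\beta)=\int_{\beta_0}^\beta I'$ versus $\varphi(\alpha)\ge\alpha\beta_0$). What your route buys is a shorter proof with no iterated limits and no discretization, plus an explicit treatment of Part~\ref{p:2_lem6} (including the vanishing boundary term at $+\infty$, justified by finiteness of $\varphi$ on $[0,\infty)$), which the paper omits as ``similar''; what the paper's version buys is that it never needs uniform control of $N_n\bar F_n(y)$ beyond an $\eps n$-window, at the cost of the extra regions. Part~\ref{p:3_lem6} is handled identically in both.
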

\begin{proof}
Let us start by  giving a non-rigorous proof of Part~\ref{p:1_lem6}.  To shorten the notation, we set
$$
J_n(t)=N_n \E[e^{\kappa (S_n-b_n)}1_{S_n\leq b_n+t}].
$$
Let $F_n(y)=\P[S_n-b_n\leq y]$ be the distribution function of the random variable $S_n-b_n$. By Lemma~\ref{lem:ld}, we have for each fixed $y\in\R$,
$$
1-F_n(y) \sim N_n^{-1} e^{-\alpha y},\;\;\; n\to\infty.
$$
Differentiating this formally, we obtain $dF_n(y)\sim  N_n^{-1} \alpha e^{-\alpha y}dy$ as $n\to\infty$. Now we can compute the limit in~\eqref{eq:lem_aux_2} as follows:
$$
J_n(t)
= N_n\int_{-\infty}^t e^{\kappa y}dF_n(y)
\sim \int_{-\infty}^t \alpha e^{(\kappa-\alpha) y}  dy
=\frac{\alpha}{\kappa-\alpha} e^{(\kappa-\alpha)t}, \;\;\;n\to\infty.
$$
Note that the integral $\int_{-\infty}^t e^{(\kappa-\alpha) y}  dy$ is finite since $\kappa>\alpha$. In a similar way, it is also possible to give non-rigorous proofs of Parts~\ref{p:2_lem6} and~\ref{p:3_lem6} of the lemma. However, making these arguments precise requires some work.

We start by proving~\eqref{eq:lem_aux_2}. Take a large number $T>0$ and a small number $\eps>0$, and set $\Delta=\beta-\beta_0-\eps$ (recall that $\beta_0$ was defined in~\eqref{eq:def_beta_0}). We have a decomposition
\begin{equation} \label{eq:dec_In}
J_n(t)
=N_n \int_{-\infty}^t e^{\kappa y}dF_n(y)
=J_n^{(1)}(t,T)+J_n^{(2)}(T,\eps)+J_n^{(3)}(\eps)+J_n^{(4)}(\eps),
\end{equation}
where
\begin{align}
J_n^{(1)}(t,T)&=\int_{-T}^{t}N_n e^{\kappa y}dF_n(y),\\
J_n^{(2)}(T,\eps)&=\int_{-\eps n}^{-T} N_n e^{\kappa y}dF_n(y),\\
J_n^{(3)}(\eps)&=\int_{-\Delta n}^{-\eps n} N_ne^{\kappa y}dF_n(y),\\
J_n^{(4)}(\eps)&=\int_{-\infty}^{-\Delta n} N_ne^{\kappa y}dF_n(y).
\end{align}
First let us show that
\begin{equation}\label{eq:In1}
\lim_{T\to+\infty}\lim_{n\to\infty} J_n^{(1)}(t,T)=\frac{\alpha}{\kappa-\alpha}e^{(\kappa-\alpha)t}.
\end{equation}
Let $\bar F_n(y)=1-F_n(y)$ be the tail of $F_n$. Noting that $d\bar F_n(y)=-dF_n(y)$ and integrating by parts, we obtain
\begin{align}
J_n^{(1)}(t,T)
&=-\int_{-T}^{t} e^{\kappa y} N_n d\bar F_n(y) \label{eq:part_int_1}\\
&=-e^{\kappa y}N_n \bar F_n(y)\Bigr|_{-T}^t+\kappa \int_{-T}^t N_n\bar F_n(y)e^{\kappa y}dy. \notag
\end{align}
By Lemma~\ref{lem:ld}, the following relation holds uniformly in $y\in[-T,t]$:
\begin{equation}\label{eq:lim_N_bar_Fn}
\lim_{n\to\infty} N_n\bar F_n(y)=e^{-\alpha y}.
\end{equation}
Applying~\eqref{eq:lim_N_bar_Fn} to the right-hand side of~\eqref{eq:part_int_1}, we obtain
$$
\lim_{n\to\infty} J_n^{(1)}(t,T)
= -e^{(\kappa-\alpha)y}\Bigr|_{-T}^t+ \kappa \int_{-T}^t e^{(\kappa-\alpha)y}dy
=\frac{\alpha}{\kappa-\alpha} e^{(\kappa-\alpha)y}\Bigr|_{-T}^t.
$$
After letting $T\to +\infty$ this yields~\eqref{eq:In1}.

To complete the proof of~\eqref{eq:lem_aux_2}, we need to show that the terms $J_n^{(2)}(T,\eps)$, $J_n^{(3)}(\eps)$, and $J_n^{(4)}(\eps)$ are in some sense negligible.  First we prove that for each fixed $\eps>0$,
\begin{equation}\label{eq:In2}
\lim_{T\to+\infty}\limsup_{n\to\infty} J_n^{(2)}(T,\eps)=0.
\end{equation}
%Now we can prove~\eqref{eq:In2} as follows.
%Note that
%$$
%J_n^{(2)}(T,\eps)
%=N_n \int_{-\eps n}^{-T} e^{\kappa y} dF_n(y)
%=- N_n \int_{-\eps n}^{-T} e^{\kappa y} d\bar F_n(y).
%$$
Integrating by parts as in~\eqref{eq:part_int_1}, we obtain
\begin{align}
J_n^{(2)}(T,\eps)
&=-e^{\kappa y} N_n \bar F_n(y)\Bigr|_{-\eps n}^{-T}+\kappa \int_{-\eps n}^{-T} N_n \bar F_n(y)e^{\kappa y}dy. \label{eq:j2_est}
%&\leq \int_{-\eps n}^{-T} Ce^{-\alpha y} e^{2y}dy\\
%&=C \int_{-\infty}^{-T} e^{(\kappa-\alpha)y}dy.
\end{align}
By Lemma~\ref{lem:ld_est1}, there is a constant $C>0$ such that for all $n\in\N$ and all $y\in [-\eps n, 0]$,
$$
N_n\bar F_n(y)\leq C e^{-\alpha y}.
$$
Applying this to~\eqref{eq:j2_est} shows that for every $n\in\N$ the following  estimate holds:
$$
J_n^{(2)}(T,\eps)
\leq C \left(e^{-(\kappa-\alpha)T}+e^{-(\kappa-\alpha)\eps n}+\kappa \int_{-\eps n}^{-T} e^{(\kappa-\alpha)y}dy\right).
$$
%$$
%\limsup_{n\to\infty} \int_{-\eps n}^{-T} N_n \bar F_n(y)e^{\kappa y}dy
%\leq C_1 \int_{-\infty}^{-T} e^{(\kappa-\alpha)y}dy.
%$$
Then~\eqref{eq:In2} follows by letting $n, T\to +\infty$ and recalling that $\kappa>\alpha$.

In the next step, we prove that for each fixed $\eps>0$,
\begin{equation}\label{eq:In3}
\lim_{n\to\infty} J_n^{(3)}(\eps)=0.
\end{equation}
We will prove that there exists a constant $C>0$ such that for all $n\in\N$ and all $y\in[-\Delta n-1, -\eps n]$,
\begin{equation}\label{eq:tail_est2}
\bar F_n(y)\leq C n^{1/2} N_n^{-1} e^{-\alpha y}.
\end{equation}
We may write $\bar F_n(y)=\P[S_n>n \beta_n]$, where $\beta_n=(b_n+y)/n$. Note that
\begin{equation}\label{eq:ineq_888}
\liminf_{n\to\infty}\beta_n\geq \beta-\Delta=\beta_0+\eps.
\end{equation}
Using the convexity of the function $I$ as in~\eqref{eq:n_I_beta_n_ineq}, we obtain
\begin{equation}\label{eq:n_I_beta_n_ineq1}
nI(\beta_n)
\geq cn+\alpha y - \log \left( \alpha \sqrt{2\pi\varphi''(\alpha) n}\right).
\end{equation}
Then~\eqref{eq:tail_est2} can be proved as follows: by Lemma~\ref{lem:ld_est0} (which is applicable in view of~\eqref{eq:ineq_888}) and~\eqref{eq:n_I_beta_n_ineq1},
$$
\bar F_n(y)\leq e^{-n I(\beta_n)}\leq  e^{-cn}e^{-\alpha y}e^{\log (\alpha \sqrt{2\pi\varphi''(\alpha) n})}< C n^{1/2} N_{n}^{-1} e^{-\alpha y}.
$$
Now we are in position to start the proof of~\eqref{eq:In3}. Denote by $K_n$ be the set $[-\Delta n-1,-\eps n]\cap\Z$. It follows that
\begin{align*}
J_n^{(3)}(\eps)
&=
N_n \int_{-\Delta n}^{-\eps n}e^{\kappa y}dF_n(y)\\
&\leq
N_n \sum_{j\in K_n} e^{\kappa (j+1)} (F_n(j+1)-F_n(j))\\
&\leq
N_n \sum_{j\in K_n} e^{\kappa (j+1)} \bar F_n(j)\\
&\leq
C n^{1/2} \sum_{j\in K_n} e^{(\kappa-\alpha) j }\\
&\leq
C' n^{1/2} e^{-\eps (\kappa-\alpha) n}.
\end{align*}
Since $\kappa-\alpha>0$, the right-hand side converges to $0$ as $n\to\infty$. This proves~\eqref{eq:In3}.

Finally, let us show that for sufficiently small $\eps>0$,
\begin{equation}\label{eq:In4}
\lim_{n\to\infty} J_n^{(4)}(\eps)=0.
\end{equation}
We have
\begin{equation}\label{eq:ineq_143}
J_n^{(4)}(\eps)=\int_{-\infty}^{-\Delta n} N_ne^{\kappa y}dF_n(y)\leq N_n e^{-\kappa \Delta n}.
\end{equation}
So, it suffices to show that $\kappa \Delta>c$.
%This would imply that for sufficiently small $\eps>0$, $(\beta-\eps)\kappa>c$, which would show that the right-hand side of~\eqref{eq:ineq_143} converges to $0$.
This can be done as follows. Recall that $\beta_0=\lim_{t\to 0+}\varphi'(t)$. By convexity of $\varphi$, $\varphi(\alpha)\geq \alpha \beta_0$. Using this, we obtain
$$
c=\alpha\varphi'(\alpha)-\varphi(\alpha)\leq \alpha\varphi'(\alpha)-\alpha \beta_0=\alpha (\beta-\beta_0)<\kappa (\beta-\beta_0).
$$
Thus, if $\eps$ is sufficiently small, then $c<\kappa (\beta-\beta_0-\eps)=\kappa \Delta$. This completes the proof of~\eqref{eq:In4}.

Now, the proof of Part~\ref{p:1_lem6} of the lemma can be completed by letting $T\to +\infty$ in the decomposition~\eqref{eq:dec_In} and taking into account~\eqref{eq:In1}, \eqref{eq:In2}, \eqref{eq:In3}, \eqref{eq:In4}. The proof of Part~\ref{p:2_lem6} is similar and will be therefore omitted.

Let us prove Part~\ref{p:3_lem6}. Define
$$
J_n(t_1, t_2)=N_n \E[e^{\kappa (S_n-b_n)}1_{b_n+t_1<S_n\leq b_n+t_2}].
$$
Integration by parts yields
\begin{align*}
J_n(t_1,t_2)
&=N_n\int_{t_1}^{t_2} e^{\kappa y} dF_n(y)\\
&=-N_n\int_{t_1}^{t_2} e^{\kappa y} d\bar F_n(y)\\
&=-e^{\kappa y}N_n\bar F_n(y)\Bigr |_{t_1}^{t_2}+\kappa \int_{t_1}^{t_2} e^{\kappa y}N_n\bar F_n(y) dy.
\end{align*}
Recalling~\eqref{eq:lim_N_bar_Fn} and the assumption $\kappa=\alpha$, we obtain $\lim_{n\to\infty} N_n \bar F_n(y)=e^{-\kappa y}$, and the convergence is uniform in $y$ as long as $y$ stays bounded (see Lemma~\ref{lem:ld}). It follows that
$$
\lim_{n\to\infty} J_n(t_1,t_2)=\kappa (t_2-t_1),
$$
which proves Part~\ref{p:3_lem6} of the lemma.
\end{proof}

\section{Proof of Theorem~\ref{theo:stable_lattice}}\label{sec:proof_stable_lat}
Let $W_n=e^{S_n-b_n'-\Delta}$, where $b_n'=[b_n]_h$, and let $W_{n,1},\ldots, W_{n,N_n}$ be independent copies of $W_n$.
Then Theorem~\ref{theo:stable_lattice} is equivalent to the following statement:
\begin{equation}\label{eq:stable_latt_cond}
\sum_{i=1}^{N_{n_k}} W_{{n_k},i}- B_{n_k}^{-1}A_{n_k}\todistr \FF'_{\alpha, \Delta},\;\;\; k\to\infty.
\end{equation}
%where $\FF'_{\alpha, \Delta}$ is an infinitely divisible distribution whose characteristic function %$\phi_{\alpha,\Delta}$ is given by the formula
%$$
%\log \phi_{\alpha,\Delta}=-iC_{\alpha, \Delta}u + \sum_{ x\in  \exp(h\Z-\Delta) }\left(e^{iux}-1-\frac{iux}{1+x^2} %\right) x^{-\alpha},
%$$
Note that the random variable $W_n$ takes values in the set $\exp(h\Z-\Delta)$. By the standard theory of convergence to infinitely divisible distributions, see e.g.~\cite[\S 25]{gnedenko_book54}, the convergence in~\eqref{eq:stable_cond} will be established once we have verified the validity of the following three statements:
\begin{enumerate}
\item For every $x\in \exp(h\Z-\Delta)$,
\begin{equation}\label{eq:stable_latt_cond1}
\lim_{k\to\infty} N_{n_k} \P[W_{n_k}=x]= x^{-\alpha}.
\end{equation}
\item For the truncated variance, we have
\begin{equation}\label{eq:stable_latt_cond2}
\lim_{\tau\to 0+}\limsup_{k\to\infty} N_{n_k} \Var [W_{n_k} 1_{W_{n_k}\leq \tau}]=0.
\end{equation}
\item  For every $\tau\notin \exp(h\Z-\Delta)$, the following limit exists and is finite:
\begin{equation}\label{eq:stable_latt_cond3}
D_{\alpha}(\tau):=\lim_{k\to\infty} (N_{n_k}\E[W_{n_k} 1_{W_{n_k}\leq \tau}]-B_{n_k}^{-1}A_{n_k}).
\end{equation}
\end{enumerate}

\subsection{Proof of~\eqref{eq:stable_latt_cond1}}
First note that $\P[W_n=x]=\P[S_n=n \beta_n]$, where $\beta_n=(b_n'+\Delta+\log x)/n$. Note also that by~\eqref{eq:def_bn_lattice}, $\lim_{n\to\infty}\beta_n=\beta$.
By Part~\ref{p:2_th_petrov} of Theorem~\ref{theo:ld},
\begin{equation}\label{eq:P_Sn_bn_latt}
\P[W_n=x] \sim \frac{h}{\sqrt {2\pi \varphi''(\alpha) n}} e^{-n I(\beta_n)}.
\end{equation}
To prove~\eqref{eq:stable_latt_cond1}, we need to find an asymptotic formula for $nI(\beta_n)$ as $n\to\infty$ with an error term of the form $o(1)$. We have, by Taylor's expansion,
$$
I(\beta_n)=I(\beta)+I'(\beta)(\beta_n-\beta) + O((\beta_n-\beta)^2),\;\;\; n\to\infty.
$$
Recall that $\beta=\varphi'(\alpha)$ and hence, $I(\beta)=c$. By Lemma~\ref{lem:I_prime}, $I'(\beta)=\alpha$. Further, recall that $b_n$ is defined by~\eqref{eq:def_bn_lattice} and hence, $\beta_n-\beta=o(n^{-1/2})$ as $n\to\infty$. Using all these facts, we obtain, as $n\to\infty$,
\begin{align*}
nI(\beta_n)
&=cn + \alpha (b_n'+\Delta-n\beta) + \alpha \log x +o(1)\notag \\
&=cn- \log (h^{-1}\sqrt{2\pi \varphi''(\alpha)n})+\alpha (\Delta-\Delta_n)+\alpha \log x +o(1).
\end{align*}
By the assumption of the theorem, $\lim_{k\to\infty}\Delta_{n_k}=\Delta$. Hence,
\begin{equation}\label{eq:i_beta_n_latt}
n_kI(\beta_{n_k})
=cn_k- \log (h^{-1}\sqrt{2\pi \varphi''(\alpha)n_k})+\alpha \log x +o(1),\;\;\; k\to\infty.
\end{equation}
Applying~\eqref{eq:i_beta_n_latt} to the right-hand side of~\eqref{eq:P_Sn_bn_latt}, we obtain that
$$
\P[W_{n_k}=x]  \sim e^{-cn_k} x^ {-\alpha}, \;\;\; k\to\infty.
$$
To complete the proof, recall that by~\eqref{eq:asympt_N}, $N_n\sim e^{cn}$ as $n\to\infty$.

\subsection{Proof of~\eqref{eq:stable_latt_cond2}}
It suffices to show that
\begin{equation}\label{eq:111}
\lim_{\tau \to +0} \lim_{k\to\infty} N_{n_k}\E[W_{n_k}^2 1_{W_{n_k}\leq \tau}]=0.
\end{equation}
Recall that $W_n$ takes values in the set $\exp(h\Z-\Delta)$. Using this and then~\eqref{eq:stable_latt_cond1}, we obtain
\begin{align}
\lim_{k\to\infty}N_{n_k}\E[W_{n_k}^2 1_{W_{n_k}\leq \tau}]
&=\lim_{k\to\infty}\sum_{\genfrac{}{}{0pt}{1} {x\in \exp(h\Z-\Delta)} {x\leq \tau} } x^2  N_{n_k} \P[W_{n_k}=x] \notag\\
&=\sum_{\genfrac{}{}{0pt}{1} {x\in \exp(h\Z-\Delta)} {x\leq \tau}}  x^{2-\alpha}. \label{eq:844}
\end{align}
We have omitted the justification of interchanging the limit and the sum, since it can be done as in the proof of Part~\ref{p:1_lem6} of Lemma~\ref{lem:aux_int}. Indeed, this proof is based on Lemmas~\ref{cor:ld} and~\ref{lem:ld_est0} which are valid both in the lattice and in the non-lattice case, and on Lemma~\ref{lem:ld} which, in the non-lattice case, should be replaced by~\eqref{eq:stable_latt_cond1}.
To complete the proof of~\eqref{eq:stable_latt_cond2}, let $\tau\to +0$ in~\eqref{eq:844} and recall that $2-\alpha>0$.

%The proof of~\eqref{eq:111} is analogous to the proof of~\eqref{eq:stable_cond2a} in the non-lattice case, see Sections~\ref{sec:stable_cond1} and~\ref{sec:comp_trunc_ld}, and is therefore omitted.

\subsection{Proof of~\eqref{eq:stable_latt_cond3}}
Assume that $\alpha\in(0,1)$. Then
\begin{align*}
N_{n_k} \E[W_{n_k}1_{W_{n_k}\leq \tau}]
&= \sum_{\genfrac{}{}{0pt}{1} {x\in \exp(h\Z-\Delta)} {x\leq \tau} } x  N_{n_k} \P[W_{n_k}=x].
\end{align*}
Using~\eqref{eq:stable_latt_cond1} and again omitting the justification of interchanging the sum and the limit, we obtain
$$
\lim_{k\to\infty} N_{n_k} \E[W_{n_k}1_{W_{n_k}\leq \tau}] = \sum_{\genfrac{}{}{0pt}{1} {x\in \exp(h\Z-\Delta)} {x\leq \tau}}  x^{1-\alpha}.
$$
Note that the right-hand side is finite since $\alpha\in(0,1)$. This proves~\eqref{eq:stable_latt_cond3} in the case $\alpha\in (0,1)$. We omit the cases $\alpha\in (1,2)$ and $\alpha=1$, since they can be handled similarly.

%\section{Proof of Theorem~\ref{theo:weak_lln} and Theorem~\ref{theo:free_energy}}
\section{Proofs of the weak laws of large numbers} \label{sec:proof_lln}

\subsection{Proof of Theorem~\ref{theo:weak_lln}}
We will deduce Theorem~\ref{theo:weak_lln} from the results of Theorems~\ref{theo:normal}, \ref{theo:crit}, \ref{theo:stable}, \ref{theo:stable_lattice} on the limiting distributions of $Z_n$, and the following standard lemma.
\begin{lemma}\label{lem:weak_comp}
Let $Z_n$ be a sequence of random variables, and let $A_n,B_n\neq 0$ be two sequences of normalizing constants such that the following two conditions are satisfied:
\begin{enumerate}
\item The random variables $\{(Z_n-A_n)/B_n, n\in\N\}$ form a tight sequence.
\item We have $\lim_{n\to\infty} B_n/A_n=0$.
\end{enumerate}
Then $Z_n/A_n\toprobab 1$ as $n\to\infty$.
\end{lemma}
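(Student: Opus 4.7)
The plan is to write $Z_n/A_n - 1$ as a product of a deterministic null sequence and a tight random sequence, and then invoke the elementary fact that such a product tends to $0$ in probability. Note first that since $B_n \neq 0$ for all $n$ and $B_n/A_n \to 0$, we automatically have $A_n \neq 0$ for all $n$ large enough, so the following algebraic manipulation is legitimate (for large $n$):
\[
\frac{Z_n}{A_n} - 1 \;=\; \frac{Z_n - A_n}{A_n} \;=\; \frac{B_n}{A_n}\cdot\frac{Z_n - A_n}{B_n}.
\]

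Let me denote $\xi_n := (Z_n - A_n)/B_n$ and $\varepsilon_n := B_n/A_n$. By hypothesis (1), the sequence $\{\xi_n\}$ is tight, meaning that for every $\eta>0$ there exists $M=M(\eta)$ with $\sup_n \P[|\xi_n| > M] < \eta$. By hypothesis (2), $\varepsilon_n$ is a deterministic sequence converging to $0$. To conclude $Z_n/A_n \toprobab 1$, I would run a routine $\delta$-$\eta$ argument: for each $\delta>0$ and $\eta>0$, pick $M$ as above, then choose $n_0$ so large that $|\varepsilon_n|\, M \leq \delta$ for all $n\geq n_0$. On the event $\{|\xi_n|\leq M\}$ we then have $|\varepsilon_n \xi_n|\leq \delta$, whence
\[
\P\!\left[\left|\frac{Z_n}{A_n}-1\right|>\delta\right] \;=\; \P[|\varepsilon_n \xi_n|>\delta] \;\leq\; \P[|\xi_n|>M] \;<\; \eta
\]
for all $n \geq n_0$. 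Since $\eta>0$ was arbitrary, this gives $\P[|Z_n/A_n - 1|>\delta]\to 0$, which is the desired convergence in probability.

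There is really no obstacle here; the lemma is a standard "bounded in probability times a deterministic null sequence is $o_{\P}(1)$" statement, and its role in the paper is purely as a bridge from the distributional limit theorems (Theorems~\ref{theo:normal}--\ref{theo:stable_lattice}, together with Corollary~\ref{cor:tight} in the lattice case) to the weak law in Theorem~\ref{theo:weak_lln}. The only minor point that deserves an explicit line in the write-up is verifying that $A_n\neq 0$ eventually, which as noted above follows immediately from the two hypotheses.
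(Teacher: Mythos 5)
Your proof is correct and follows essentially the same route as the paper: both rest on the factorization $Z_n/A_n-1=(B_n/A_n)\cdot(Z_n-A_n)/B_n$ and the standard argument that a tight sequence multiplied by a deterministic null sequence converges to $0$ in probability, with your $(M,\eta,\delta)$ playing the roles of the paper's $(m,\delta,\eps)$. Nothing further is needed.
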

\begin{proof}
Fix some $\eps>0$ and $\delta>0$. We have to show that there is $N=N(\eps,\delta)$ such that for all $n>N$,
$$
\P\left[\left|\frac{Z_n}{A_n}-1\right|>\eps\right]<\delta.
$$
By the first assumption of the lemma, we can find $m=m(\delta)$ such that for all $n\in\N$,
$$
\P\left[\left|\frac{Z_n-A_n}{B_n}\right|>m\right]<\delta.
$$
By the second assumption of the lemma, there is $N=N(\eps,\delta)$ such that for $n>N$, we have $\eps |A_n|/|B_n|> m$. This implies that for $n>N$,
$$
\P\left[\left|\frac{Z_n}{A_n}-1\right|>\eps \right]=\P\left[\left|\frac{Z_n-A_n}{B_n}\right|>\eps \frac{|A_n|}{|B_n|}\right]\leq \P\left[\left|\frac{Z_n-A_n}{B_n}\right|>m\right]< \delta.
$$
This completes the proof of the lemma.
\end{proof}

\begin{proof}[Proof of Theorem~\ref{theo:weak_lln}]
First, we define two normalizing sequences $A_n$ and $B_n$ as follows. If $c\in[c_2,\infty)$, then we set $A_n=\E Z_n$ and  $B_n=(\Var Z_n)^{1/2}$. If $c\in [c_1,c_2)$, then let $A_n, B_n$ be defined as in Theorem~\ref{theo:stable} if $X$ is non-lattice and as in Theorem~\ref{theo:stable_lattice} if $X$ is lattice.
Then, as $n\to\infty$,
\begin{equation}\label{eq:an_cases}
A_n= \begin{cases} \E Z_n \sim e^{(\varphi(1)+c)n}, &\text{ if }c>c_1,\\
N_n\E[e^{S_n}1_{S_n\leq b_n}] \sim \frac 12 e^{(\varphi(1)+c_1)n}\sim \frac 12 e^{\varphi'(1)n},
&\text{ if }c=c_1.\end{cases}
\end{equation}
Note that the case $c=c_1$ follows from Part~\ref{p:1_lem3} of Lemma~\ref{lem:trunc_clt}. Using this, we may rewrite~\eqref{eq:weak_lln} as follows:
\begin{equation}\label{eq:plim_zn_an}
Z_n/A_n\toprobab 1,\;\;\; n\to\infty.
\end{equation}
By Theorems~\ref{theo:normal}, \ref{theo:crit}, \ref{theo:stable}, \ref{theo:stable_lattice} (see also Corollary~\ref{cor:tight}), the random variables $\{(Z_n-A_n)/B_n, n\in\N\}$ form a tight sequence (and even a convergent sequence, unless $c\in [c_1,c_2)$ and the distribution of $X$ is lattice). Thus, the first assumption of Lemma~\ref{lem:weak_comp} is satisfied.

To prove~\eqref{eq:plim_zn_an}, it suffices to show that the second assumption of Lemma~\ref{lem:weak_comp} is fulfilled, i.e.\
\begin{equation}\label{eq:lim_bn_an}
\lim_{n\to\infty} B_n/A_n=0.
\end{equation}

Assume first that $c\geq c_2$.
We have
%\begin{equation}\label{eq:lln_An}
%A_n=\E Z_n=N_n\E [e^{S_n}] \sim e^{(c+\varphi(1))n}
%\end{equation}
%and
$$%\begin{equation}
B_n=(\Var Z_n)^{1/2}
\leq N_n^{1/2}(\E[e^{2S_n}])^{1/2}
\sim  e^{((c+\varphi(2))/2)n},\;\;\; n\to\infty.
$$%\end{equation}
Hence, to prove~\eqref{eq:lim_bn_an}, it suffices to show that $(c+\varphi(2))/2< c+\varphi(1)$. We have
$$
c\geq c_2=2\varphi'(2)-\varphi(2)>2(\varphi(2)-\varphi(1))-\varphi(2)=\varphi(2)-2\varphi(1),
$$
where we have used that $\varphi'(2)>\varphi(2)-\varphi(1)$ by the strict convexity of $\varphi$. It follows that
$$
2(c+\varphi(1))-(c+\varphi(2))=c+2\varphi(1)-\varphi(2) > 0,
$$
which proves~\eqref{eq:lim_bn_an} and verifies the second assumption of Lemma~\ref{lem:weak_comp}.
%Applying Lemma~\ref{lem:weak_comp} completes the proof in the case $c\geq c_2$.

Assume now that $c\in(c_1,c_2)$. Then $A_n=\E Z_n$ and $B_n$ is defined by~\eqref{eq:def_Bn_large} or~\eqref{eq:def_bn_lattice}. In both cases,
$$
B_n=e^{b_n}=o(e^{\beta n}),\;\;\; n\to\infty.
$$
To prove~\eqref{eq:lim_bn_an}, it suffices to show that $\beta<c+\varphi(1)$. We have
$$
c+\varphi(1)-\beta=\alpha \varphi'(\alpha)-\varphi(\alpha)+\varphi(1)-\varphi'(\alpha)
=(\alpha-1) \varphi'(\alpha)-(\varphi(\alpha)-\varphi(1)).
$$
By convexity of $\varphi$,  and by the fact that $\alpha>1$ (which follows from $c>c_1$), the right-hand side is positive. This proves~\eqref{eq:lim_bn_an}.

Finally, assume that $c=c_1$. By~\eqref{eq:an_cases}, $A_n\sim (1/2) e^{\varphi'(1)n}$ as $n\to\infty$.
On the other hand, by~\eqref{eq:def_Bn_large} and~\eqref{eq:def_bn_lattice}, $B_n=o(e^{\beta n})=o(e^{\varphi'(1)n})$ as $n\to\infty$. This completes the proof of~\eqref{eq:lim_bn_an} and the proof of~\eqref{eq:plim_zn_an}.
\end{proof}

\subsection{Proof of Remark~\ref{rem:no_seq}}\label{sec:proof_rem}
The proof of Remark~\ref{rem:no_seq} will follow from the following standard lemma.
\begin{lemma}\label{lem:stand_lem}
Let $\{Y_n, n\in\N\}$ be a tight sequence of positive random variables,  and assume that there is no constant $x$ which is a weak accumulation point of $\{Y_n, n\in\N\}$. Then there is no sequence $a_n>0$ with
\begin{equation}\label{eq:Plim_Yn_an}
Y_n/a_n\toprobab 1,\;\;\; n\to\infty.
\end{equation}
\end{lemma}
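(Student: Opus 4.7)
My plan is to argue by contradiction: I will suppose that there exists a sequence of positive constants $a_n$ with $Y_n/a_n\toprobab 1$, and show that this forces some constant $a\in[0,\infty)$ to be a weak accumulation point of $\{Y_n\}$, contradicting the hypothesis. The whole argument is an exercise in extracting convergent subsequences from $\{a_n\}$ and transferring convergence between $Y_n/a_n$ and $Y_n$.

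The first step is to check that $\{a_n\}$ is bounded above. If $a_{n_k}\to\infty$ along some subsequence, then tightness of $\{Y_n\}$ yields, for any fixed $M>0$, that $\P[Y_{n_k}>M]$ is eventually less than any prescribed $\delta>0$; in particular $\P[Y_{n_k}/a_{n_k}>1/2]\leq \P[Y_{n_k}> a_{n_k}/2]\to 0$. But $Y_{n_k}/a_{n_k}\toprobab 1$ forces this probability to tend to $1$, a contradiction. So $\limsup_n a_n<\infty$.

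Next, by Bolzano--Weierstrass, extract a subsequence $\{n_k\}$ along which $a_{n_k}\to a$ for some $a\in[0,\infty)$. Since $a_{n_k}$ is a deterministic convergent sequence and $Y_{n_k}/a_{n_k}\toprobab 1$, Slutsky's theorem (or direct estimation) gives $Y_{n_k}=a_{n_k}\cdot(Y_{n_k}/a_{n_k})\toprobab a$. In particular $Y_{n_k}\todistr a$, so the constant $a$ is a weak accumulation point of $\{Y_n\}$, contradicting the hypothesis. This establishes~\eqref{eq:Plim_Yn_an} is impossible and completes the proof.

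The potential subtlety here is allowing $a=0$: one must be sure that the convergence $Y_{n_k}\toprobab 0$ indeed counts as producing a constant weak accumulation point. Since $Y_n$ are positive, the limit $0$ is a legitimate (degenerate) distribution on $[0,\infty)$ and the argument goes through without change. Apart from this small bookkeeping issue, the proof is straightforward.
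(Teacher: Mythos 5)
Your proof is correct and follows essentially the same route as the paper: rule out $a_n\to\infty$ via tightness, then pass to a convergent subsequence of $\{a_n\}$ and use the hypothesis that no constant is a weak accumulation point to get a contradiction. The only cosmetic difference is that you treat the limits $a=0$ and $a>0$ uniformly via Slutsky, whereas the paper handles the cases $a_n\to 0$ and $a_n\to a>0$ separately; both arguments are equally valid.
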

\begin{proof}
Assume first that $\lim_{n\to\infty}a_n=\infty$. Then
$$
\lim_{n\to\infty}\P[Y_n/a_n>1/2]=\lim_{n\to\infty}\P[Y_n>a_n/2]=0,
$$
where the last equality follows from the tightness of the sequence $\{Y_n,n\in\N\}$. Hence, Eq.\ref{eq:Plim_Yn_an} cannot hold in this case.
Assume now that $\lim_{n\to\infty}a_n=0$. Then
$$
\liminf_{n\to\infty}\P[Y_n/a_n<2]=\liminf_{n\to\infty}\P[Y_n<2 a_n]<1,
$$
where the last equality follows from the assumption that $Y_n\nottoprobab 0$. Hence, Eq.\ref{eq:Plim_Yn_an} cannot hold in this case too.
Assume now that $\lim_{n\to\infty}a_n=a$, where $a>0$. Then Eq.\ref{eq:Plim_Yn_an} is not fulfilled since we have assumed that $Y_n\nottoprobab a$.

Now let the sequence  $a_n$ be arbitrary. Then it has a subsequence which  converges either to $\infty$, or to $0$, or to some limit $a>0$. Applying the above considerations, we see that~\eqref{eq:Plim_Yn_an} cannot hold.
\end{proof}

\begin{proof}[Proof of Remark~\ref{rem:no_seq}]
Let $B_n$ be defined as in  Theorem~\ref{theo:stable}, resp.\ as in Theorem~\ref{theo:stable_lattice}, in the non-lattice, resp.\ in the lattice case. Then by Theorems~\ref{theo:stable} and~\ref{theo:stable_lattice} applied in the case $c<c_1$, the sequence $Y_n:=Z_n/B_n$ satisfies the assumptions of Lemma~\ref{lem:stand_lem}. The statement of Remark~\ref{rem:no_seq} follows.
\end{proof}

\subsection{Proof of Theorem~\ref{theo:free_energy}}
First of all, note that the case $c\geq c_1$ follows from Theorem~\ref{theo:weak_lln}. Let us consider the case $c\in (0,c_1)$. Define $B_n$ as in~\eqref{eq:def_Bn_large}, resp.\ as in~\eqref{eq:def_bn_lattice}, in the non-lattice, resp. in the lattice case. Fix some $\eps>0$. We show that
\begin{equation}\label{eq:P_log_Zn_geq}
\lim_{n\to\infty}\P\left[\frac 1n \log Z_n>\beta+\eps\right]=0.
\end{equation}

By Theorem~\ref{theo:stable} and Theorem~\ref{theo:stable_lattice}, the family of random variables $\{Z_n/B_n, n\in\N\}$ is tight (and, in the non-lattice case, even convergent). Therefore, if $m_n$ is a sequence with $\lim_{n\to\infty}m_n=\infty$, then
\begin{equation}\label{eq:lim_P_Zn_Bn}
\lim_{n\to\infty}\P\left[\frac {Z_n}{B_n}>m_n\right]=0.
\end{equation}
We have
$$
\P\left[\frac 1n \log Z_n>\beta+\eps\right]=\P\left[Z_n>e^{(\beta+\eps)n}\right]=\P\left[\frac{Z_n}{B_n}>\frac{e^{(\beta+\eps)n}}{B_n}\right].
$$
Using~\eqref{eq:lim_P_Zn_Bn} and the fact that by~\eqref{eq:def_Bn_large} and~\eqref{eq:def_bn_lattice}, $B_n=o(e^{(\beta+\eps)n})$ as $n\to\infty$, we arrive at~\eqref{eq:P_log_Zn_geq}.

It remains to prove that
\begin{equation}\label{eq:P_log_Zn_leq}
\lim_{n\to\infty}\P\left[\frac 1n \log Z_n<\beta-\eps\right]=0.
\end{equation}
We have
$$
\P\left[\frac 1n \log Z_n<\beta-\eps\right]=\P\left[Z_n<e^{(\beta-\eps)n}\right]=\P\left[\frac{Z_n}{B_n}<\frac{e^{(\beta-\eps)n}}{B_n}\right].
$$
From the description of weak accumulation points of the sequence $\{Z_n/B_n, n\in\N\}$ given in  Theorem~\ref{theo:stable} and Theorem~\ref{theo:stable_lattice} it follows  that for every sequence $m_n$ such that $\lim_{n\to\infty}m_n=0$, we have
\begin{equation}\label{eq:lim_P_Zn_Bn1}
\lim_{n\to\infty}\P\left[\frac {Z_n}{B_n}<m_n\right]=0.
\end{equation}
Using this and the fact that by~\eqref{eq:def_Bn_large} and~\eqref{eq:def_bn_lattice}, $e^{(\beta-\eps)n}=o(B_n)$ as $n\to\infty$, we obtain~\eqref{eq:P_log_Zn_leq}.
This completes the proof.

\subsection{Proof of Proposition~\ref{prop:third_order}}
First of all note that $f_{+}$ is well-defined in a neighborhood of $c_1$. We show that $f_+(c_1)=f_-(c_1)$. Recall that $c_1=I(\varphi'(1))=\varphi'(1)-\varphi(1)$. Using this, we obtain
$$
f_+(c_1)=I^{-1}(c_1)=\varphi'(1)=\varphi(1)+c_1=f_-(c_1).
$$
Let us prove that $f_+'(c_1)=f_-'(c_1)$. By Lemma~\ref{lem:I_prime}, $I'(\varphi'(1))=1$. By the formula for the derivative of the inverse function,
$$
f_+'(c_1)=(I^{-1}(x))'|_{x=c_1}= \frac{1}{I'(I^{-1}(c_1))}=\frac{1}{I'(\varphi'(1))}=1=f_-'(c_1).
$$
Finally, let us show that $f''_+(c_1)\neq f''_-(c_1)$. To this end note that $f_-$ is linear, whereas $f_+$ is strictly concave as an inverse of a is strictly convex function $I$. This implies that  $f''_-(c_1)=0$ and $f''_+(c_1)<0$.

\section*{Acknowledgements}
The author is grateful to A.~Jan\ss en and M.~Schlather for several useful remarks.

\bibliographystyle{plainnat}
\bibliography{paper15bib}

\begin{thebibliography}{21}
\expandafter\ifx\csname natexlab\endcsname\relax\def\natexlab#1{#1}\fi
\expandafter\ifx\csname url\endcsname\relax
  \def\url#1{{\tt #1}}\fi

\bibitem[Bahadur and Ranga~Rao(1960)]{bahadur_rao60}
R.~Bahadur and R.~Ranga~Rao.
\newblock {On deviations of the sample mean.}
\newblock {\em Ann. Math. Stat.}, 31:\penalty0 1015--1027, 1960.

\bibitem[Ben~Arous et~al.(2005)Ben~Arous, Bogachev, and
  Molchanov]{ben_arous_etal05}
G.~Ben~Arous, L.~Bogachev, and S.~Molchanov.
\newblock {Limit theorems for sums of random exponentials.}
\newblock {\em Probab. Theory Relat. Fields}, 132\penalty0 (4):\penalty0
  579--612, 2005.

\bibitem[Bogachev(2006)]{bogachev06}
L.~Bogachev.
\newblock {Limit laws for norms of IID samples with Weibull tails.}
\newblock {\em J. Theor. Probab.}, 19\penalty0 (4):\penalty0 849--873, 2006.

\bibitem[Bogachev(2007)]{bogachev07}
L.~Bogachev.
\newblock {Extreme value theory for random exponentials.}
\newblock {Dawson, D. (ed.) et al., Probability and mathematical physics. A
  volume in honor of Stanislav Molchanov. CRM Proceedings and Lecture Notes 42,
  41-64 (2007). Providence, RI: AMS}, 2007.

\bibitem[Bovier(2006)]{bovier_book06}
A.~Bovier.
\newblock {\em {Statistical mechanics of disordered systems. A mathematical
  perspective.}}
\newblock {Cambridge Series in Statistical and Probabilistic Mathematics 18.
  Cambridge University Press}, 2006.

\bibitem[Bovier et~al.(2002)Bovier, Kurkova, and {L\"owe}]{bovier_etal02}
A.~Bovier, I.~Kurkova, and M.~{L\"owe}.
\newblock {Fluctuations of the free energy in the REM and the $p$-spin SK
  models.}
\newblock {\em Ann. Probab.}, 30\penalty0 (2):\penalty0 605--651, 2002.

\bibitem[{Cs\"org\"o} and {R\'ev\'esz}(1981)]{csorgo_book81}
M.~{Cs\"org\"o} and P.~{R\'ev\'esz}.
\newblock {\em {Strong approximations in probability and statistics.}}
\newblock {Probability and Mathematical Statistics. New York etc.: Academic
  Press}, 1981.

\bibitem[{Cs\"org\"o} et~al.(1986){Cs\"org\"o}, {Horv\'ath}, and
  Mason]{csorgo_etal86}
S.~{Cs\"org\"o}, L.~{Horv\'ath}, and D.~Mason.
\newblock {What portion of the sample makes a partial sum asymptotically stable
  or normal?}
\newblock {\em Probab. Theory Relat. Fields}, 72:\penalty0 1--16, 1986.

\bibitem[{Cs\"org\"o} and Mason(1986)]{csorgo_mason86}
S.~{Cs\"org\"o} and D.~Mason.
\newblock {The asymptotic distribution of sums of extreme values from a
  regularly varying distribution.}
\newblock {\em Ann. Probab.}, 14:\penalty0 974--983, 1986.

\bibitem[Dembo and Zeitouni(1993)]{dembo_book93}
A.~Dembo and O.~Zeitouni.
\newblock {\em {Large deviations techniques and applications.}}
\newblock {Boston, MA: Jones and Bartlett Publishers}, 1993.

\bibitem[Durrett(1979)]{durrett79}
R.~Durrett.
\newblock Maxima of branching random walks vs. independent random walks.
\newblock {\em Stochastic Process. Appl.}, 9\penalty0 (2):\penalty0 117--135,
  1979.

\bibitem[Durrett(1991)]{durrett_book91}
R.~Durrett.
\newblock {\em {Probability. Theory and examples.}}
\newblock {Brooks/Cole Statistics/Probability Series}, 1991.

\bibitem[Eisele(1983)]{eisele83}
Th. Eisele.
\newblock {On a third-order phase transition.}
\newblock {\em Commun. Math. Phys.}, 90:\penalty0 125--159, 1983.

\bibitem[Galves et~al.(1989)Galves, Martinez, and Picco]{galves_etal89}
A.~Galves, S.~Martinez, and P.~Picco.
\newblock {Fluctuations in Derrida's random energy and generalized random
  energy models}.
\newblock {\em J. Stat. Phys.}, 54:\penalty0 515--529, 1989.

\bibitem[Gnedenko and Kolmogorov(1954)]{gnedenko_book54}
B.V. Gnedenko and A.N. Kolmogorov.
\newblock {\em {Limit distributions for sums of independent random variables.}}
\newblock {Cambridge: Addison-Wesley Publishing Company}, 1954.

\bibitem[Ivchenko(1973)]{ivchenko73}
G.~Ivchenko.
\newblock {Variational series for a scheme of summing independent variables.}
\newblock {\em Theory of Probab. Appl.}, 18:\penalty0 531--545, 1973.

\bibitem[{Jan\ss en}(2009)]{janssen09}
A.~{Jan\ss en}.
\newblock {Limit laws for power sums and norms of i.i.d. samples.}
\newblock {\em Probab. Theory Related Fields}, To appear. DOI:
  10.1007/s00440-008-0198-y, 2009.

\bibitem[Olivieri and Picco(1984)]{olivieri_picco84}
E.~Olivieri and P.~Picco.
\newblock {On the existence of thermodynamics for the random energy model.}
\newblock {\em Commun. Math. Phys.}, 96:\penalty0 125--144, 1984.

\bibitem[Petrov(1965)]{petrov65}
V.~Petrov.
\newblock {On the probabilities of large deviations for sums of independent
  random variables.}
\newblock {\em Theor. Probab. Appl.}, 10:\penalty0 287--298, 1965.

\bibitem[Petrov(1995)]{petrov_book95}
V.~Petrov.
\newblock {\em {Limit theorems of probability theory. Sequences of independent
  random variables.}}
\newblock {Oxford Studies in Probability, 4. Oxford: Clarendon Press}, 1995.

\bibitem[Schlather(2001)]{schlather01}
M.~Schlather.
\newblock {Limit distributions of norms of vectors of positive i.i.d.\ random
  variables.}
\newblock {\em Ann. Probab.}, 29\penalty0 (2):\penalty0 862--881, 2001.

\end{thebibliography}
\end{document}